\documentclass[reqno]{amsart}
\usepackage{amssymb}
\usepackage{amsmath}
\usepackage[mathscr]{euscript}
\usepackage{times}
\usepackage{newcent}       

\usepackage{color}

\makeatletter
\@addtoreset{equation}{section}
\makeatother

\newtheorem{theorem}{Theorem}[section]
\newtheorem{lemma}[theorem]{Lemma}
\newtheorem{proposition}[theorem]{Proposition}
\newtheorem{corollary}[theorem]{Corollary}

\newtheorem{remark}[theorem]{Remark}

\newcommand{\e}{\varepsilon}

\newcommand{\lan}{\langle}
\newcommand{\ran}{\rangle}

\newcommand{\R}{\mathbb{R}}
\newcommand{\E}{\mathbb{E}}
\newcommand{\N}{\mathbb{N}}
\newcommand{\Z}{\mathbb{Z}}

\newcommand{\T}{\mathbb{T}}

\newcommand{\ex}{{\rm ex}}
\newcommand{\G}{{\rm G}}
\newcommand{\bd}{{\rm bd}}
\renewcommand{\ss}{{\rm ss}}
\renewcommand{\d}{{\rm d}}

\newcommand{\mc}[1]{{\mathcal #1}}
\newcommand{\mf}[1]{{\mathfrak #1}}

\newcommand{\bb}[1]{{\mathbb #1}}

\newcommand{\mtt}[1]{{\mathtt #1}}

\newcommand{\<}{\langle}
\renewcommand{\>}{\rangle}
\renewcommand{\tilde}{\widetilde}

\definecolor{bblue}{rgb}{.2,0.2,.8}

\begin{document}

\title[Critical stationary fluctuations]{Critical stationary
fluctuations in reaction--diffusion processes} 

\author[L. Cardoso]{Luis Cardoso}
\address{IMPA, Estrada Dona Castorina 110, CEP 22460 Rio de Janeiro, Brasil}
\email{luis.cardoso@impa.br}

\author[C. Landim]{Claudio Landim}
\address{IMPA, Estrada Dona Castorina 110, CEP 22460 Rio de Janeiro, Brasil and CNRS UMR 6085, Universit\'{e} de Rouen, France}
\email{landim@impa.br}

\author[K. Tsunoda]{Kenkichi Tsunoda}
\address{Faculty of Mathematics, Kyushu University, 744, Motooka, Nishi-ku, Fukuoka, 819-0395, Japan}
\email{ktsunoda@math.kyushu-u.ac.jp}

\subjclass[2020]{60K35, 60H10, 60H17, 60F05}
\keywords{reaction-diffusion particle systems, stationary non-Gaussian
fluctuations, non-linear SDE}

\date{\today}

\begin{abstract}

We study stationary fluctuations at criticality for a one-dimensional
reaction--diffusion process combining symmetric simple exclusion
dynamics with Glauber-type spin flips. The strength of the Glauber
interaction is tuned to the critical regime in which the quadratic
term in the effective potential vanishes. Focusing on the stationary
distribution, we show that the total magnetization scaled by $n^{3/4}$
exhibits non-Gaussian fluctuations.  More precisely, we prove that
under the invariant measure the rescaled magnetization converges in
distribution to a random variable with density proportional to
$\exp\{-2(\theta y^2 + y^4/2)\}$. In contrast with the previous
result, we show that the density field acting on the faster modes,
that is, those associated to zero-mean test functions, have much
smaller Gaussian fluctuations. It follows from the previous two
results that the rescaled density field projects onto the
magnetization in the sense that its action on zero-mean test functions
vanishes in the limit.
\end{abstract}

\maketitle

\section{Introduction}\label{sec:intro}

Understanding the nature of fluctuations in interacting particle
systems near criticality is a central problem in probability theory
and statistical mechanics. At critical points, standard central limit
theorems typically fail, and macroscopic observables exhibit
non-Gaussian behavior governed by universal scaling laws. A major goal
of the field is to rigorously derive such fluctuation limits from
microscopic dynamics and to identify the mechanisms that determine
their universal features.

This program was first carried out for anharmonic oscillators in a
two-well potential with attractive mean-field interaction
\cite{daw83}, and later for the Ising model with a mesoscopic Kac
interaction under Glauber dynamics in dimension $d \le 3$
\cite{bprs93, mw17, gmw25}. Deriving non-Gaussian fluctuations at
criticality for short-range interacting particle systems remains an
open problem. There are, however, results showing that a dynamical
slowdown occurs near the critical point for non-conservative
finite-range interactions \cite{ls12, bd24}.

The study of equilibrium and non-equilibrium fluctuations of
interacting particle systems has a long history, beginning with the
seminal works of Rost \cite{r81} and of Brox and Rost \cite{br84}. We
do not attempt to review the development of the theory here, and
instead refer to \cite{kl, DL} for comprehensive accounts of the
literature. We emphasize, however, that recent progress by Jara and
Menezes \cite{jm18, jm20} on nonequilibrium fluctuations for
finite-range dynamics, together with the advances in regularity
structures for discrete models developed by Erhard and Hairer
\cite{eh19} and by Huang, Matetski, and Weber \cite{hmw25}, bring us
closer to proving that the density fluctuation field at criticality
for the reaction--diffusion models considered here are described by the
$\Phi^4_d$ field for $d \le 3$. Establishing this result remains one
of the central challenges in the area. 

The purpose of this paper is to provide a rigorous characterization of
stationary critical fluctuations for a one-dimensional reaction--diffusion
model combining symmetric simple exclusion dynamics with Glauber-type spin
flips. The Glauber interaction strength is tuned at criticality, leading to a
degeneracy of the quadratic term in the effective potential and the emergence
of quartic fluctuations. We focus on the stationary distribution of the
rescaled total magnetization and show that it converges to a non-Gaussian
limit described by a quartic potential.

More precisely, we prove that under the stationary measure, the rescaled
magnetization converges weakly to the unique invariant distribution of a
one-dimensional diffusion with drift given by the derivative of a quartic
potential. This result identifies the stationary fluctuation law explicitly
and shows that it coincides with the ergodic measure of the limiting critical
diffusion obtained in the dynamical scaling limit.

The second main result of the article establishes that under the
stationary state the faster modes, that is, those associated to
zero-mean test functions, have much smaller Gaussian fluctuations. It
follows from this result and the weak convergence of the rescaled
magnetization that the rescaled density field projects onto the
magnetization in the sense that its action on zero-mean test functions
vanishes in the limit.

To the best of our knowledge, this provides the first rigorous derivation of a
non-Gaussian critical fluctuation for the stationary state of a short-range
interacting particle system. While non-Gaussian limits of this type are well
known in mean-field models such as the Curie--Weiss model, their appearance in
spatially interacting systems with local dynamics has remained largely open.
Our result shows that such critical non-Gaussian fluctuations also arise in the
stationary state of a short-range interacting particle system.

From a methodological viewpoint, the main difficulty lies in
controlling the stationary measures at criticality. While the
dynamical convergence of the magnetization process was established in
\cite{DL}, passing to the stationary limit requires uniform entropy
and moment bounds at the critical scale. A key ingredient of
our approach is a logarithmic Sobolev inequality for a sequence of
non-product reference measures that incorporate the critical tilt of
the invariant state. This allows us to prove the tightness of the
stationary fluctuations and to identify all possible limit points.

We finally mention closely related previous works.
The fluctuations under the stationary state for
the boundary-driven simple exclusion processes
were studied in \cite{lmo08, fgn19, gjnm20}.
Regarding the reaction--diffusion process,
the case where the Glauber jump rate is given by
a constant plus a small perturbation
was investigated in \cite{GJMM}. In our main theorem,
we also assume that the Glauber jump rate is small,
and a similar assumption is also imposed in \cite{GJMM}.
In any of these works, the diffusive time scale is considered,
and the limiting fluctuations are therefore naturally Gaussian.

This paper is organized as follows.  In Section \ref{sec:model}, we
introduce the reaction--diffusion process and state the main results,
Theorems \ref{main} and \ref{thm2}, in a precise manner.  The proof of
these results is presented in Section \ref{sec:pfofmain}, whereas the
proof of the aforementioned logarithmic Sobolev inequality is
postponed to the latter sections.  In Section \ref{sec:lsi}, we prove
the logarithmic Sobolev inequality based on one for some auxiliary
birth-death process.  Technical computations for the birth-death
process are provided in Section \ref{sec:pflogsobolev} in detail.

\section{Model and result}\label{sec:model}

For each $n\in\N$, let $\T_n$ be the one-dimensional discrete torus
$\Z/n\Z$ and $\Omega_n$ be the configuration space $\{0,1\}^{\T_n}$.
We denote its generic element by $\eta=(\eta(x))_{x\in\T_n}$.
We define a Markovian operator $L_n:=n^2L_n^\ex+aL_n^\G$, where
$L_n^\ex$ and $L_n^\G$ represent the generator of the symmetric simple exclusion process and the Glauber dynamics, respectively:
\begin{align*}
L_n^\ex f(\eta)&=\sum_{x\in\T_n}\left\{f(\eta^{x,x+1}) - f(\eta)\right\},\\
L_n^\G f(\eta)&=\sum_{x\in\T_n}c_x(\eta)\left\{f(\eta^{x}) - f(\eta)\right\}.
\end{align*}
In these formulas, $\eta^{x,x+1}$ and $\eta^x$ represent
the configuration obtained from $\eta$ by exchanging
the occupation variables at $x$ and $x+1$ and
by flipping the value at $x$, respectively:
\begin{align*}
\eta^{x,x+1}(y)=
\begin{cases}
\eta(x+1), & y=x,\\
\eta(x), & y=x+1,\\
\eta(y), & y\neq x, x+1,
\end{cases}
\quad 
\eta^{x}(y)=
\begin{cases}
1-\eta(x), & y=x,\\
\eta(y), & y\neq x.
\end{cases}
\end{align*}
The Glauber jump rate $c_x(\eta)$ is defined as
\begin{align*}
c_x(\eta)=1-\gamma\sigma(x)(\sigma(x-1)+\sigma(x+1)) +\gamma^2\sigma(x-1)\sigma(x+1),
\end{align*}
for some $0\le \gamma \le 1$, where $\sigma(x):=2\eta(x)-1$.
In this paper, we consider the case where $\gamma$ varies 
as a function in $n$:
\begin{align*}
\gamma=\gamma_n=\dfrac12\left(1-\dfrac{\theta}{\sqrt n}\right),
\end{align*}
for fixed $\theta\in\R$.
The constant $a>0$ is also given and fixed, and is chosen to be small enough later.

Since the reaction--diffusion process is an irreducible Markov process
on the finite set $\Omega_n$, there uniquely exists a probability measure
on $\Omega_n$, denoted by $\mu_\ss^n$, which is invariant under the dynamics of $L_n$.
Let $\mc Y^n:\Omega_n\to\R$ be the rescaled total magnetization:
\begin{align*}
\mc Y^n=\mc Y^n(\eta):=\dfrac{1}{n^{3/4}}\sum_{x\in\T_n}(\eta(x)-1/2).
\end{align*}
For each $H\in C^\infty(\T)$, we also define
\begin{align*}
\mtt Y^n(H)=\mtt Y^n(H;\eta):=\dfrac{1}{n^{3/4}}\sum_{x\in\T_n}H(x/n)(\eta(x)-1/2), \quad
\lan H \ran :=\int_{\T} H(y) dy.
\end{align*}
Let $\alpha^n$ be the distribution of $\mc Y^n$ under $\mu_\ss^n$,
that is,
\begin{align*}
\alpha^n(A):=\mu_\ss^n\left(\eta\in\Omega_n: \mc Y^n(\eta)\in A \right), \quad A\in\mc B(\R).
\end{align*}
Define the function $V(y)=\theta y^2 + y^4/2, y\in\R$
and the measure $\alpha^*$ on $\R$ by
\begin{align*}
\alpha^*(dy)=Z^{-1}e^{-2V(y)}dy, \quad Z=\int_{\R} e^{-2V(y)}dy,
\end{align*}
where $dy$ stands for the one-dimensional Lebesgue measure on $\R$.
Moreover, let $\mtt Y^*$ be an $\R$-valued random variable having the distribution $\alpha^*$.

The following theorem is the main result of this paper
and is proved in Section \ref{sec:pfofmain}.

\begin{theorem}\label{main}
For sufficiently small $a>0$ the sequence of the probability measures
$\{\alpha^n\}_{n\in\N}$ weakly converges to the measure $\alpha^*$ as
$n\to\infty$.
\end{theorem}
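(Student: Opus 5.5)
Since $\mu_\ss^n$ is invariant, $E_{\mu_\ss^n}[L_n F]=0$ for every $F:\Omega_n\to\R$. The plan is to apply this with $F=f(\mc Y^n)$, $f\in C_c^\infty(\R)$, and recognize in the limit the stationarity equation of the diffusion $dY=-V'(Y)\,dt+dW$ (up to constants fixed by the scaling). For that diffusion, $\alpha^*$ is the unique measure $\alpha$ satisfying
\begin{equation*}
\int \Big(\tfrac12 f''-V'f'\Big)\,d\alpha=0 \quad\text{for all } f\in C_c^\infty(\R).
\end{equation*}
The proof then has three parts: tightness of $\{\alpha^n\}$, identification of every limit point through this equation, and uniqueness. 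Uniqueness is classical: a measure satisfying the equation solves $(\tfrac12\alpha'+V'\alpha)'=0$ in the distributional sense, and integrability forces $\alpha\propto e^{-2V}$.

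\emph{Generator computation.} The exclusion part preserves $\sum_x\eta(x)$, so $L_n^\ex f(\mc Y^n)=0$. A flip at $x$ changes $\mc Y^n$ by $\mp n^{-3/4}$. A second-order Taylor expansion gives
\begin{equation*}
aL_n^\G f(\mc Y^n)= -a\,n^{-3/4}f'(\mc Y^n)\sum_x c_x(\eta)\sigma(x)+\tfrac{a}{2}n^{-3/2}f''(\mc Y^n)\sum_x c_x(\eta)+O(n^{-5/4}).
\end{equation*}
The natural time scale is $n^{1/2}$: multiply by $n^{1/2}$. The second term then becomes $\tfrac a2 f''\cdot n^{-1}\sum_x c_x$, and $n^{-1}\sum_x c_x\to 1$ in the relevant approximation. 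For the drift, use $c_x\sigma(x)=\sigma(x)(1+\gamma^2\sigma(x-1)\sigma(x+1))-\gamma(\sigma(x-1)+\sigma(x+1))$. With $\gamma=\tfrac12(1-\theta n^{-1/2})$, the linear part is $(1-2\gamma)\sum_x\sigma(x)=\theta n^{-1/2}\cdot 2n^{3/4}\mc Y^n$, which contributes a $\theta \mc Y^n f'$ term at the right order. The cubic part $\gamma^2\sum_x\sigma(x-1)\sigma(x)\sigma(x+1)$ must be replaced, after multiplication by $n^{1/2}n^{-3/4}$, by $\tfrac14\sum_x\sigma(x)^3$ evaluated at density fields. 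This is a Boltzmann--Gibbs / replacement step: the local cubic function should be replaced by $(2n^{-1/4}\mc Y^n)^3n$, giving the $(\mc Y^n)^3$ term of $V'$. Collecting terms produces $\tfrac a2(f''-2V'f')$ up to errors.

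\emph{Main obstacle: static replacement under an unknown measure.} The replacement has to be justified under $\mu_\ss^n$, which is not explicit. I will use the relative entropy method with the non-product reference measure $\nu^n$ proportional to $\exp\{-2n^{1/2}\cdot\text{(quartic tilt in } \mc Y^n)\}$ times Bernoulli$(1/2)$, that is, a measure whose magnetization already has law close to $\alpha^*$. The logarithmic Sobolev inequality for $\nu^n$ (stated to be proved later in the paper) with constant of order $n^{-2}$ from exclusion plus the Glauber part, combined with the standard entropy production bound $\Gamma_n(\sqrt{d\mu_\ss^n/d\nu^n})\lesssim$ (adjoint error), should give $H(\mu_\ss^n|\nu^n)=O(1)$ for small $a$. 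Smallness of $a$ is needed to absorb the error terms against the exclusion Dirichlet form in the log-Sobolev estimate. From the entropy bound, tightness follows through the entropy inequality, since $\nu^n$ has uniform quartic moments of $\mc Y^n$. The replacement errors are controlled by the entropy inequality plus a Dirichlet form bound: the Dirichlet form $n^2D^\ex(\sqrt{f})$ of the density $f=d\mu_\ss^n/d\nu^n$ is $O(1)$. This bound makes the one-block/two-block replacement of $\sigma(x-1)\sigma(x)\sigma(x+1)$ by the cube of a mesoscopic average cost $o(1)$, using the $n^2$ speed of exclusion. The hard estimate is this entropy bound itself, and I would try first to derive it from the log-Sobolev inequality via the Yau-type relation $0=\int L_n f\,d\nu^n$ bounded by $-D(\sqrt f)+\int f\,(L_n^*\mathbf 1)\,d\nu^n$. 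With tightness and identification in hand, every limit point satisfies the stationarity equation, and therefore $\alpha^n\to\alpha^*$.
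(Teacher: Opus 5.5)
Your architecture (tightness, then limit points invariant for \eqref{sde}, then uniqueness) matches the paper's. There is a genuine gap, however: both of your analytic steps rest on $H(\mu_\ss^n|\nu^n)=O(1)$, and the ingredients you name do not give this.

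In the Yau-type relation for $f=f_\ss^n$, the adjoint term $\int f\,L_n^*\mathbf 1\,d\nu_U^n$ is bounded by $C(n^2D_n^\ex+D_n^\G)+C$; this is the input from \cite{DL} behind \eqref{ineq20}. So for small $a$ what you control is $n^2D_n^\ex(f;\nu_U^n)+D_n^\G(f;\nu_U^n)=O(1)$. But the Glauber part of the logarithmic Sobolev inequality carries a factor $\sqrt n$ (Theorem \ref{thm1}), and this cannot be improved (Proposition \ref{prop1}; take $f$ depending only on $\overline{m_n}$). Hence your route yields $H_n(f_\ss^n|\nu_U^n)=O(\sqrt n)$, which is exactly Lemma \ref{lem4}.

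That bound does \emph{not} imply tightness. Under $\nu_U^n$ the law of $\mc Y^n$ has tails like $e^{-y^4}$, so conditioning $\nu_U^n$ on $\{\mc Y^n\approx n^{1/8}\}$ costs only $O(\sqrt n)$ entropy. The entropy inequality then gives merely $\mu_\ss^n(|\mc Y^n|>A)\lesssim \sqrt n/A^4$.

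The missing ingredient is an independent moment bound. The paper proves $\sup_n E_{\mu_\ss^n}[(\mc Y^n)^4]<\infty$ from the dynamical Lyapunov-type inequality (5.2) of \cite{DL}, where the quartic part of the drift produces the term $\sqrt n\,\bb E\int_0^t(\mc Y^n_s)^4ds$. This is applied to the $\sqrt n$-accelerated dynamics started from $\nu_U^n$. It is then transferred to $\mu_\ss^n$ through the Ces\`aro means $t^{-1}\int_0^t\mu^n_s\,ds\to\mu_\ss^n$.

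Your identification step fails for a related reason. One-block/two-block estimates give $L^1$ replacements, and the replacement you need is false in $L^1$. Write $\bar\sigma=2n^{-1/4}\mc Y^n$ and $\tilde\sigma=\sigma-\bar\sigma$. The degree-three part $n^{-1/4}\sum_x\tilde\sigma(x-1)\tilde\sigma(x)\tilde\sigma(x+1)$ of $n^{-1/4}\sum_x\sigma(x-1)\sigma(x)\sigma(x+1)$ is of order $n^{1/4}$ already under $\nu_U^n$, being a sum of $n$ weakly correlated centred terms. Its analogue built on blocks of size $\ell$ is only of order $n^{1/4}/\ell$. Likewise the degree-two part $n^{-1/4}\bar\sigma\sum_x\tilde\sigma(x-1)\tilde\sigma(x+1)$ is of order one. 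These terms vanish only weakly, after time averaging along the $\sqrt n$-accelerated dynamics, as in \cite{DL}.

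This is why the paper never replaces statically. It starts the dynamics from $\mu_\ss^n$ and observes that Lemma \ref{lem4} supplies exactly the hypotheses \eqref{sufDL}. Under these, \cite[Theorem 2.3]{DL} gives convergence of $(\mc Y^n_t)_{t\le T}$ to \eqref{sde} in $\bb L^p([0,T])$. It then passes to fixed-time marginals by mollifying in time, using the bound on $\bb E|\mc Y^n_t-\mc Y^n_s|$, and concludes $\tilde\alpha P_t=\tilde\alpha$ by stationarity.

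Your generator identity $E_{\mu_\ss^n}[L_n f(\mc Y^n)]=0$ can be made to work only by the same device: write $E_{\mu_\ss^n}[\,\cdot\,]=T^{-1}\bb E_{\mu_\ss^n}[\int_0^T\cdot\,ds]$ and import the time-integrated estimates. Those estimates in turn need the moment bound above.

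A minor point: a reference measure whose magnetization law is close to $\alpha^*$ must tilt Bernoulli$(1/2)$ by roughly $e^{2\sqrt n(\mc Y^n)^2-2V(\mc Y^n)}$, cancelling the Gaussian weight $e^{-2\sqrt n y^2}$. This is what $e^{nU(\overline{m_n}/n)}$ does; it is not a factor $e^{-2\sqrt n\cdot\mathrm{quartic}}$.
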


\begin{remark}
To prove Theorem \ref{main}, it suffices to take $a$ sufficiently small so that
(1) the coefficient of the Dirichlet form term in \eqref{ineq20} is negative,
and (2) \cite[(i) of Theorem 2.3]{DL} is valid.
For the assertion (1), see the proof of Lemma \ref{lem4}.
Note also that the assertion (2) is used in the proof of Lemma \ref{lem5}.
\end{remark}

The second main result of the article asserts that the faster modes,
that is, those associated to zero-mean test functions, have much
smaller Gaussian fluctuations. This is the content of the next result.
Denote by $y^n$ the density field with Gaussian scaling: for any
smooth $H\colon \bb T \to\bb R$,
\begin{equation}
y^n (H) \,:=\,
\frac{1}{\sqrt{n}\,} \sum_{x\in\T_n} H(x/n)\, [\, \eta (x) - 1/2\,] \,.
\end{equation}
Denote by $\<\,\cdot\,,\,\cdot\>$ the scalar product in $\bb L^2(\T)$. 

\begin{theorem}
\label{thm2}
Assume that the hypotheses of Theorem \ref{main} are in force. Then,
under the measure $\mu_{\ss}^n$, the field $(y^n)_{n\ge 1}$ weakly
converges to a Gaussian field $y$ on $\{H\in \bb L^2(\T) : \<H\>=0\}$,
with covariance:
\begin{equation}
\E \big[\,y(H)\, y (G) \, \big]
\,=\, 
\frac{1}{4} \<H,G\> + \frac{a}{2}\big\<H,(-\Delta)^{-1} G\big\> \,.
\end{equation}
\end{theorem}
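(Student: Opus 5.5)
We prove Theorem \ref{thm2} with the characteristic-function (stationarity) method: by Cramér--Wold it suffices to treat one zero-mean test function at a time. Fix $H\in C^\infty(\T)$ with $\<H\>=0$ and write $H=-\Delta F$, where $F$ is the unique zero-mean smooth solution. We work with $y^n(F)$ rather than $y^n(H)$, since the exclusion part of the generator turns $F$ into $\Delta F$. Since $\mu^n_\ss$ is invariant, $E_{\mu^n_\ss}[L_n \Phi]=0$ for every $\Phi$. The plan is to use this identity with test functionals built from $y^n$ to obtain, in the limit, a stationary Ornstein--Uhlenbeck characterization of every limit point.

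The first step is a direct computation of the generator. One has
\begin{equation*}
n^2L^\ex_n y^n(G)=y^n(\Delta_n G),
\end{equation*}
where $\Delta_n$ is the discrete Laplacian. The carré du champ of the exclusion part is $\Gamma^\ex(y^n(G))=n^{-1}\sum_x(\nabla_nG(x/n))^2(\eta(x+1)-\eta(x))^2$. For the Glauber part, $aL^\G_n y^n(G)=-a n^{-1/2}\sum_x G(x/n)\,\sigma(x)\,c_x(\eta)$. Expanding $\sigma(x)c_x=\sigma(x)(1+\gamma^2\sigma(x-1)\sigma(x+1))-\gamma(\sigma(x-1)+\sigma(x+1))$ and using $\gamma_n=\tfrac12-\theta/(2\sqrt n)$, the linear part is $(1-2\gamma_n)\sigma(x)$ plus discrete-gradient terms, which give $O(n^{-1/2})\,y^n(G)$. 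The remaining cubic term is $\gamma^2\sigma(x-1)\sigma(x)\sigma(x+1)$.

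The main obstacle is the control needed before passing to the limit:
\begin{enumerate}
\item we need $E_{\mu_\ss^n}[y^n(G)^2]=O(1)$ for zero-mean $G$;
\item we need the cubic term $n^{-1/2}\sum_x G(x/n)\sigma(x-1)\sigma(x)\sigma(x+1)$ to vanish in $L^2(\mu^n_\ss)$;
\item we need the fields $n^{-1}\sum_x \Psi(x/n)\,[(\eta(x+1)-\eta(x))^2-\tfrac12]$ and $n^{-1}\sum_x\Psi(x/n)\,[c_x-\E c_x]$, which appear in the carré du champ, to obey a law of large numbers.
\end{enumerate}
For all three I would use the entropy and logarithmic Sobolev bound developed for Theorem \ref{main}. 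Specifically, the relative entropy of $\mu_\ss^n$ with respect to the critically tilted reference measure is $O(1)$ (or $o(n^{1/2})$), and the entropy inequality combined with concentration under the reference measure, which is product-like in the transverse directions, bounds exponential moments of $y^n(G)$ for $\<G\>=0$. For the cubic term, the magnetization is $O(n^{3/4})$ by Theorem \ref{main}. Centered at the local density, the cubic term therefore splits into a piece of order $n^{-1/2}(n^{3/4}/n)^{3}\cdot n$, which is small, plus fluctuation pieces controlled through a one-block/Boltzmann--Gibbs argument under the stationary measure. That argument uses the $n^2$ speed of the exclusion, via $E_{\mu_\ss}[f(-L_n f)]$-type variance bounds obtained from stationarity. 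I would first try the Kipnis--Varadhan-type bound $E_{\mu_\ss}[V]\le$ (a $H_{-1}$ norm), which is available because $\mu^n_\ss$ is stationary even though it is not reversible.

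With these estimates in hand, apply stationarity to $\Phi=e^{i\lambda y^n(F)}$. Expanding $L_n e^{i\lambda y}$ to second order in the jumps, which are of size $n^{-1/2}$, gives
\begin{equation*}
0=E\Big[e^{i\lambda y^n(F)}\Big(i\lambda\, y^n(\Delta F)+i\lambda\,o(1)-\tfrac{\lambda^2}{2}\big(\tfrac12\|\nabla F\|^2+ a\cdot\tfrac12\|F\|^2\big)\Big)\Big]+o(1).
\end{equation*}
The diffusion coefficient uses $E[(\eta(x+1)-\eta(x))^2]\to\tfrac12$ and $E[c_x]\to1-\gamma^2\cdot 0+\dots$, evaluated under the law of large numbers at density $\tfrac12$; the value $4\gamma^2\cdot 0$ and the correct constant are to be checked. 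Now let $\psi(\lambda)=\lim E[e^{i\lambda y(F)}]$ along a subsequence, with tightness coming from step 1. Since $y^n(\Delta F)=-y^n(H)$, this identity links the joint characteristic function of $(y(F),y(H))$.

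To close the argument, run the same identity for $\Phi=e^{i y^n(G)}$ with arbitrary zero-mean $G$. This yields, for every limit point, the stationary Ornstein--Uhlenbeck equation $\Delta$ with noise $\sqrt{-\Delta+a}$-type. Its unique stationary Gaussian solution on zero-mean functions has covariance $\tfrac12 Q(-\Delta)^{-1}$ with $Q=\tfrac12(-\Delta)+\tfrac a2$, which matches $\tfrac14\<H,G\>+\tfrac a2\<H,(-\Delta)^{-1}G\>$ after tracking constants. Uniqueness of this solution identifies the limit, and Cramér--Wold gives the convergence of finite-dimensional distributions.
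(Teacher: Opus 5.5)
Your architecture (the stationarity identity $E_{\mu^n_{\rm ss}}[L_n e^{i\lambda y^n(F)}]=0$, reduction to the infinitesimal Ornstein--Uhlenbeck relation, identification of the Gaussian limit) is a legitimate alternative to the paper's route. The paper imports \cite[Proposition 8.1]{DL} and only has to supply the entropy bound of Lemma \ref{lem4} for the initial law $\mu^n_{\rm ss}$. The gap is in the estimates you feed into your identity.

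For item (1) you assume an $O(1)$ (or $o(\sqrt n)$) relative entropy with respect to $\nu^n_U$. What is actually available is $H_n(f^n_{\rm ss}|\nu^n_U)\le C\sqrt n$. For $\langle G\rangle=0$, $y^n(G)$ has only order-one Gaussian tails under $\nu^n_U$, so the entropy inequality then gives at best $E_{\mu^n_{\rm ss}}[y^n(G)^2]\le C(1+H_n(f^n_{\rm ss}|\nu^n_U))=O(\sqrt n)$; $o(\sqrt n)$ would not help either. The input that works is the Dirichlet form bound that stationarity yields. The entropy-production estimate \eqref{ineq20} is a bound in terms of $f^n_t$ alone, so it applies to the process started from $\mu^n_{\rm ss}$, where $H_n'\equiv 0$. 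Hence $D_n(f^n_{\rm ss};\nu^n_U)\le aC_1/(2-aC_1)$, i.e.\ $n^2D^{\rm ex}_n(f^n_{\rm ss};\nu^n_U)=O(1)$. Through the hyperplane decomposition in the proof of Lemma \ref{lem1}, this makes the entropy of $\mu^n_{\rm ss}$ \emph{conditioned on the magnetization}, relative to the uniform measures $\nu^{n,m}_{1/2}$, of order one. Only this transverse bound gives $\sup_n E_{\mu^n_{\rm ss}}[y^n(G)^2]<\infty$. Item (3) is harmless: $n^{-1}$-normalized averages concentrate at rate $e^{-cn\delta^2}$, so entropy $O(\sqrt n)$ suffices there.

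Item (2) is false as stated, and it is where the real work lies. Under Bernoulli$(1/2)$ the triples $\sigma(x-1)\sigma(x)\sigma(x+1)$ are uncorrelated for $x\neq x'$. So $X_n:=n^{-1/2}\sum_x G(x/n)\sigma(x-1)\sigma(x)\sigma(x+1)$ has variance $n^{-1}\sum_x G(x/n)^2\to\|G\|^2$. This order-one fluctuation persists under $\mu^n_{\rm ss}$, since a measure with $O(1)$ transverse entropy cannot make it vanish. Subtracting the local-density part, as you propose, leaves exactly this term.

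What the identity needs is only $E_{\mu^n_{\rm ss}}[e^{i\lambda y^n(F)}X_n]\to0$, a Boltzmann--Gibbs statement. It cannot come from entropy bounds: tilting $\nu^n_U$ by $e^{cX_n}$ costs $O(1)$ entropy and gives $X_n$ a nonzero mean. A Kipnis--Varadhan bound under $\mu^n_{\rm ss}$ does not supply it either. Its $H_{-1}$ norm is built from the Dirichlet form of the unknown measure $\mu^n_{\rm ss}$, against which you cannot integrate by parts, and it controls time integrals rather than the fixed-time correlation in $E[L_n\Phi]=0$.

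What rules out such tilts is $n^2D^{\rm ex}_n(f^n_{\rm ss};\nu^n_U)=O(1)$. Integrating by parts against the exchangeable measure $\nu^n_U$, a one-block estimate replaces $\sigma(x-1)\sigma(x)\sigma(x+1)$ by a function of mesoscopic block densities, after which $\mathcal Y^n=O_P(1)$ makes it negligible. This replacement step is the missing idea.

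Finally, a constant needs fixing. A flip changes $y^n(F)$ by $\pm n^{-1/2}F(x/n)$ and $E[c_x]\to1$, so the Glauber part of the carr\'e du champ tends to $a\|F\|^2$, not $\frac a2\|F\|^2$. With your $Q=\frac12(-\Delta)+\frac a2$ you would get $\frac a4(-\Delta)^{-1}$ instead of $\frac a2(-\Delta)^{-1}$.
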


It follows from the two previous results that under the measure 
$\mu_{\ss}^n$ the rescaled density field $\mtt Y^n(\cdot) $ projects
onto the magnetization. In particular, its action on zero-mean test
functions vanishes in the limit.

\begin{corollary}
\label{cor1}
For any $H\in C^\infty(\T)$, the sequence of the random variables
$\{\mtt Y^n(H)\}_{n\in\N}$ under $\mu_{\ss}^n$ weakly converges to the
random variable $\lan H\ran \mtt Y^*$ as $n\to\infty$.
\end{corollary}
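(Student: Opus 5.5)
Decompose the test function as $H = \lan H\ran + \bar H$ with $\bar H := H - \lan H\ran$, so that $\lan \bar H\ran = 0$. By linearity of $\mtt Y^n$ in the test function,
\begin{equation*}
\mtt Y^n(H) \,=\, \lan H\ran\, \mtt Y^n(1) \,+\, \mtt Y^n(\bar H)
\,=\, \lan H\ran\, \mc Y^n \,+\, n^{-1/4}\, y^n(\bar H)\,,
\end{equation*}
since $\mtt Y^n(1)=\mc Y^n$ and $\mtt Y^n(G) = n^{-1/4} y^n(G)$ for every $G$. The plan is to show that the first term converges in law to $\lan H\ran \mtt Y^*$ and that the second converges to $0$ in probability. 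Slutsky's lemma then gives the claim.

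The first term follows directly from Theorem \ref{main}. Indeed, $\mc Y^n$ under $\mu_\ss^n$ has law $\alpha^n$, which converges weakly to $\alpha^*$. The map $y\mapsto \lan H\ran y$ is continuous, so the continuous mapping theorem shows that $\lan H\ran \mc Y^n$ converges in distribution to $\lan H\ran\mtt Y^*$.

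For the second term, apply Theorem \ref{thm2} to the zero-mean smooth function $\bar H$. It gives that $y^n(\bar H)$ converges in distribution to a centered Gaussian variable $y(\bar H)$ with finite variance $\frac14\<\bar H,\bar H\> + \frac a2\<\bar H,(-\Delta)^{-1}\bar H\>$. A sequence converging in distribution is tight, so $\{y^n(\bar H)\}_n$ is tight. Multiplying a tight sequence by the deterministic factor $n^{-1/4}\to 0$ gives convergence to $0$ in probability. Concretely, for $\e>0$ and $M$ large enough that $\sup_n \mu_\ss^n(|y^n(\bar H)|>M)<\e$, we get $\mu_\ss^n(|n^{-1/4}y^n(\bar H)|>\delta)<\e$ as soon as $n^{1/4}\delta > M$.

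Combining the two pieces through Slutsky's lemma, $\mtt Y^n(H) \to \lan H\ran\mtt Y^*$ in distribution. No step is a genuine obstacle, since all the analytic work is contained in Theorems \ref{main} and \ref{thm2}. The only point to check is that Theorem \ref{thm2} applies to $\bar H$, which is smooth and has $\lan \bar H\ran=0$. Only tightness of $y^n(\bar H)$ is used, not the exact form of its limiting covariance.
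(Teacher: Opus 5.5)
Your argument is correct, but it is not the route the paper's proof takes. The paper never invokes Theorem \ref{thm2} here. It uses stationarity to rewrite $E_{\mu_\ss^n}\big[|\mtt Y^n(H)-\lan H\ran\mc Y^n|\big]$ as $T^{-1}\E_{\mu_\ss^n}\big[\int_0^T|\mtt Y^n_s(H)-\lan H\ran\mc Y^n_s|\,ds\big]$ along the process generated by $\sqrt n L_n$. It then applies the dynamical replacement estimate \cite[Theorem 2.3(i), (7.3)]{DL}, which is available from $\mu_\ss^n$ because Lemma \ref{lem4} supplies the bounds \eqref{sufDL}. This gives $L^1$ convergence of the difference to $0$, and Theorem \ref{main} concludes.

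Your decomposition $\mtt Y^n(H)=\lan H\ran\mc Y^n+n^{-1/4}y^n(\bar H)$ followed by Slutsky is instead the static argument alluded to in the sentence preceding the corollary ("It follows from the two previous results\dots"). It is shorter and quantitative, since the error is $O_P(n^{-1/4})$. However, it leans on Theorem \ref{thm2}, i.e.\ on control of the zero-mean modes at the Gaussian scale $\sqrt n$, and the paper's proof of that theorem is only a pointer to \cite[Proposition 8.1]{DL}. The paper's route needs only the weaker fact that these modes vanish at scale $n^{3/4}$ in time-averaged $L^1$. That fact comes from the same \cite{DL} machinery already used in Lemma \ref{lem5}.

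Either way, some input beyond the entropy bound of Lemma \ref{lem4} is needed. That bound, combined with the entropy inequality, only yields $\sup_n E_{\mu_\ss^n}|\mtt Y^n(\bar H)|<\infty$, not that this quantity vanishes.
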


\section{Proof of Theorems \ref{main} and
\ref{thm2}}\label{sec:pfofmain}

In this section, we prove the main results of the article.  As our
proof relies on several results established in \cite{DL}, we start by
introducing the necessary notation used there.

Define $U:[-1/2,1/2]\to\R$ by
\begin{align*}
U(\varrho):=\dfrac{1}{\gamma}\left\{[1+2\gamma\varrho]\log[1+2\gamma\varrho] + [1-2\gamma\varrho]\log[1-2\gamma\varrho] \right\}, \quad \varrho\in[-1/2,1/2],
\end{align*}
and $U_0:[0,1]\to\R$ by $U_0(\rho):=U(\rho-1/2), \rho\in[0,1]$.
The relationship between $U_0$ and the birth and death jump rates $ B$ and $ D$ is as follows.
Recall that
\begin{align*}
B(\rho)=(1-\rho)[1+\gamma(2\rho-1)]^2, \quad D(\rho)=\rho[1-\gamma(2\rho-1)]^2,
\end{align*}
for $\rho\in[0,1]$. Then
\begin{align*}
\log B(\rho) -\log D(\rho)= E'(\rho) + U_0'(\rho),
\end{align*}
where
\begin{align*}
E(\rho):=\rho\log\rho+(1-\rho)\log(1-\rho)+\log 2.
\end{align*}
Let $W(\rho) := E(\rho) - U_0(\rho), \rho\in[0,1]$.  When
$\gamma = \gamma_n = \frac{1}{2} \big ( 1-\theta n^{-1/2}\big)$, that
$W$ satisfies the following conditions: 
\begin{equation}
W(\frac{1}{2}) = W'(\frac{1}{2}) = W^{(3)} (\frac{1}{2}) = 0, W''(\frac{1}{2}) = \frac{4\theta}{\sqrt{n}}, W^{(4)} (\rho) \ge c_W \text{ for all $\rho\in[0,1]$}\label{W_properties}
\end{equation}
for some finite constant $c_W>0$. In this formula, $W^{(j)}(\cdot)$,
$j\ge 3$, stands for the $j$-th derivative of $W$.
Note that $W$ has a dependency on $n$.

Let $\nu_{1/2}^n$ be the uniform measure on $\Omega_n$.
We also define the measure $\nu_U^n$ by
\begin{align*}
\nu_U^n(\eta):=\dfrac{e^{nU(\overline{m_n}/n)}}{Z_U^n}\nu_{1/2}^n(\eta), \quad \eta\in\Omega_n,
\end{align*}
where
$$\overline{m_n}=\overline{m_n(\eta)}:=\sum_{x\in\T_n}(\eta(x) -1/2).$$

For each density $f:\Omega_n\to\R$ with respect to $\nu_U^n$,
we define the relative entropy and the Dirichlet forms
with respect to $\nu_U^n$ by
\begin{align*}
H_n(f|\nu_U^n)&:=\int_{\Omega_n} f\log fd\nu_U^n,\\
D_n^\ex(f;\nu_U^n)&:=\dfrac12\sum_{x\in\T_n}\int_{\Omega_n} \left\{\sqrt{f(\eta^{x,x+1})} -\sqrt{f(\eta)}\right\}^2 d\nu_U^n,\\
D_n^G(f;\nu_U^n)&:=\dfrac12\sum_{x\in\T_n}\int_{\Omega_n} c_x(\eta)\left\{\sqrt{f(\eta^{x})} -\sqrt{f(\eta)}\right\}^2 d\nu_U^n,\\
D_n(f;\nu_U^n)&:=n^2D_n^\ex(f;\nu_U^n) + D_n^G(f;\nu_U^n).
\end{align*}
Let $f_\ss^n$ be the Radon-Nikodym derivative of $\mu_\ss^n$ with respect to $\nu_U^n$.

The next result is crucial to invoke the dynamical result established in \cite{DL}.

\begin{lemma}\label{lem4}
For sufficiently small $a>0$, we have
\begin{align*}
\sup_{n\in\N} \left\{\dfrac{H_n(f_\ss^n|\nu_U^n)}{\sqrt n} + E_{\mu_\ss^n}[(\mc Y^n)^4] \right\} < \infty.
\end{align*}
In particular, the sequence of the measures $\{\alpha^n\}_{n\in\N}$ is tight.
\end{lemma}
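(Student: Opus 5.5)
Since $\mu_\ss^n$ is invariant, the time derivative of the relative entropy at stationarity vanishes. This is the identity I will exploit, via the standard entropy production (Yau-type) inequality with respect to the non-invariant reference measure $\nu_U^n$. Writing $f=f_\ss^n$, the estimate reads
\begin{align*}
0=\frac{d}{dt}H_n(f_t|\nu_U^n)\Big|_{f_t\equiv f}\le -2a\,D_n(f;\nu_U^n)\cdot c + \int \frac{L_n^{*}\mathbf 1}{1}\, f\,d\nu_U^n ,
\end{align*}
where $L_n^*$ is the adjoint in $L^2(\nu_U^n)$. More precisely, I will use $\int L_n f\log f\,d\nu_U^n$ bounded by minus a multiple of the Dirichlet form plus $\int (L_n^*\mathbf 1) f\,d\nu_U^n$.

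\textbf{Step 1: the exclusion part.} The exclusion part is reversible for $\nu_U^n$, since $\nu_U^n$ depends on $\eta$ only through the total magnetization, which exclusion conserves. It therefore contributes exactly $-n^2 D_n^\ex$ and nothing to $L_n^*\mathbf 1$.

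\textbf{Step 2: the Glauber part and its expansion.} The Glauber part gives $-D_n^G$ plus the term $a\int \sum_x [c_x(\eta^x)e^{n[U(\cdot\pm 1/n)-U]}-c_x(\eta)]f\,d\nu_U^n$. I will expand $c_x$ and use the relation $\log B-\log D=E'+U_0'$ to replace local functions by functions of the empirical density. The replacement will be done via a one-block/two-block type estimate at the cost of $\delta n^2 D_n^\ex+C\delta^{-1}H_n$, and this is where the $n^2$ in front of the exclusion Dirichlet form will be absorbed.

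\textbf{Step 3: identifying the leading term and closing the estimate.} After the replacement, the leading contribution should be $-n\,W'(m)\,(B-D)$-like terms evaluated at $m=\overline{m_n}/n$. By \eqref{W_properties}, this behaves like
\begin{align*}
-n\Big(\frac{4\theta}{\sqrt n}m+\tfrac{c}{6}m^3\Big)m\;\approx\;-\big(4\theta (\mc Y^n)^2+c (\mc Y^n)^4\big).
\end{align*}
The rescaling comes from $m=n^{-1/4}\mc Y^n$, so that $n m^4=(\mc Y^n)^4$ and $\sqrt n m^2=(\mc Y^n)^2$. The bound then reads
\begin{align*}
E_{\mu_\ss^n}[(\mc Y^n)^4]\le C + C\,E[(\mc Y^n)^2] + \text{error}.
\end{align*}
The quadratic term is absorbed by the quartic one, giving the fourth-moment bound. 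The entropy bound comes from the logarithmic Sobolev inequality for $\nu_U^n$ announced in the introduction, with constant of order $\sqrt n$ (the critical birth-death chain relaxes on time scale $\sqrt n$). It gives $H_n\le C\sqrt n\, D_n$, and the Dirichlet form is controlled by the same identity. Smallness of $a$ is needed so that the Dirichlet form coefficient in the final inequality stays negative after the errors from Step 2 are absorbed. Tightness of $\alpha^n$ then follows from Chebyshev's inequality and the fourth moment bound.

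\textbf{Main obstacle.} I expect the main difficulty to be Step 2: the replacement of the local Glauber rates, which are products $\sigma(x-1)\sigma(x)\sigma(x+1)$, by functions of the magnetization with errors small relative to $(\mc Y^n)^4$. The Glauber generator carries no $n^2$, while the errors must be $o(1)$ on the critical scale. My first attempt will be to exploit the $n^2 D_n^\ex$ via a Boltzmann–Gibbs/flat-profile estimate under $\nu_U^n$, which is exchangeable given the magnetization. Combined with the log-Sobolev inequality, this should give an error of order $n^{-1/2}H_n+$ const.
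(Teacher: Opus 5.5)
\emph{Verdict: there is a genuine gap.} The entropy half of your plan works, but the fourth-moment bound cannot come from the mechanism you propose in Step 3.

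\textbf{The entropy half.} Evaluate the entropy production inequality behind \eqref{ineq20} at $f_\ss^n$, where $H_n(\cdot|\nu_U^n)$ is constant in time. This gives $D_n(f_\ss^n;\nu_U^n)\le aC_1/(2-aC_1)$. Theorem \ref{thm1} then gives $H_n(f_\ss^n|\nu_U^n)\le C\sqrt n$. This is a legitimate shortcut for the paper's argument along the flow started from $\nu_U^n$. Your Step 2 is the estimate the paper imports from \cite{DL} rather than reproving.

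\textbf{The gap in Step 3.} You cannot extract $E_{\mu_\ss^n}[(\mc Y^n)^4]$ from $\int f_\ss^n\,L^{*}\mathbf 1\,d\nu_U^n$, because $L^{*}_{\nu_U^n}\mathbf 1$ contains no term $-(4\theta(\mc Y^n)^2+c(\mc Y^n)^4)$. The tilt $e^{nU}$ is chosen precisely so that $\pi_n$ is reversible, up to $O(1/n)$ corrections in the rates, for the mean-field birth--death chain with rates $nB(\rho)$, $nD(\rho)$. Concretely, use the correct sign $\log B-\log D=U_0'-E'=-W'$, together with the identities $(1-\rho)(1-\gamma(2\rho-1))^2=D\,e^{-E'}$ and $\rho(1+\gamma(2\rho-1))^2=B\,e^{E'}$. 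Then the mean-field value of $\sum_x[c_x(\eta^x)\nu_U^n(\eta^x)/\nu_U^n(\eta)-c_x(\eta)]$ at $\rho=n^{-1}\sum_x\eta(x)$ is
\begin{equation*}
n\big[D\,e^{U_0'-E'}+B\,e^{E'-U_0'}-B-D\big]+O(1)\;=\;n\,[B+D-B-D]+O(1)\;=\;O(1).
\end{equation*}
What remains are sums of local terms of degree two and three in the recentred spins; near zero magnetization this is essentially $4\gamma\sum_x\sigma(x)\sigma(x+1)$. These terms only need to be absorbed by $\delta n^2D_n^\ex$ plus a constant. So stationarity of the entropy controls the Dirichlet form and the entropy, but yields no bound on $E_{\mu_\ss^n}[(\mc Y^n)^4]$.

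\textbf{What is actually missing.} The expression $-n\,m\,W'(m)$ you computed is the leading part of $L_n^\G(\overline{m_n}^{\,2}/n)=L_n^\G\big(\sqrt n(\mc Y^n)^2\big)$. It is the drift of a Lyapunov function, not an entropy-production term. The missing ingredient is this second, separate estimate, which the paper imports as \cite[(5.2)]{DL}.

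Since exclusion conserves $\overline{m_n}$, only the Glauber part contributes:
\begin{equation*}
L_n^\G\overline{m_n}^{\,2}=-2\overline{m_n}\sum_x\sigma(x)c_x(\eta)+\sum_xc_x(\eta),
\end{equation*}
with $\sum_x\sigma(x)c_x(\eta)=(1-2\gamma)\sum_x\sigma(x)+\gamma^2\sum_x\sigma(x-1)\sigma(x)\sigma(x+1)$. Replacing the cubic sum by $n(2\overline{m_n}/n)^3$ gives
\begin{equation*}
\sqrt nL_n\big(\sqrt n(\mc Y^n)^2\big)=-4a\theta\sqrt n(\mc Y^n)^2-16a\gamma^2\sqrt n(\mc Y^n)^4+O(a\sqrt n),
\end{equation*}
up to a replacement error. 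That error is controlled only through the entropy and Dirichlet-form bounds from the first half.

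This is why the paper proceeds in order:
\begin{itemize}
\item it proves the entropy bound first;
\item it integrates (5.2) along the flow from $\nu_U^n$, where $H_n(f_0^n|\nu_U^n)=0$;
\item it absorbs the $\theta$-term using $x^4\ge4A(x^2-A)$;
\item it time-averages and lets $t\to\infty$.
\end{itemize}
At stationarity you could in principle use $E_{\mu_\ss^n}[L_n(\mc Y^n)^2]=0$ together with the bound on $D_n(f_\ss^n;\nu_U^n)$ instead. But that is a second identity, and your proposal does not contain it.
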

\begin{proof}
We first consider the bound on the relative entropy.  Let
$(\eta_t^n)_{t\ge0}$ be the process generated by $\sqrt n L_n$, which
starts from $\nu_U^n$.  Denote the distribution of $\eta_t^n$ by
$\mu_t^n$ and the Radon-Nikodym derivative of $\mu_t^n$ with respect
to $\nu_U^n$ by $f_t^n$.  As shown in the proof of \cite[Theorem
2.7]{DL} (see the inequality after (4.14)) that there exists a
constant $C_1>0$ such that
\begin{align}\label{ineq20}
H_n'(f_t^n|\nu_U^n) \le(aC_1-2)\sqrt{n}D_n(f_t^n;\nu_U^n)+aC_1\sqrt{n},
\end{align}
for any $t\ge0$. If $a\le C_1^{-1}$, by Theorem \ref{thm1}
(logarithmic Sobolev inequality for $\nu_U^n$) we have
\begin{align*}
H_n'(f_t^n|\nu_U^n) \le-C^{-1}H_n(f_t^n|\nu_U^n)+\sqrt{n},
\end{align*}
where $C$ is a constant given in Theorem \ref{thm1}.
From this inequality, we obtain $H_n(f_t^n|\nu_U^n)\le C\sqrt n$ for any $t\ge0$.
Since the reaction--diffusion process is irreducible,
$f_t^n$ pointwisely converges to $f_\ss^n$ as $t\to\infty$.
Hence we obtain $H_n(f_\ss^n|\nu_U^n)\le C\sqrt n$.

We next consider the moment bound in the statement.
It is also proved in the proof of \cite[Corollary 5.4]{DL} (see (5.2)) that
there exist a constant $C_2>0$ and $N_0\in\N$ such that
\begin{align*}
&e^n_t+16a\gamma^2\sqrt{n} \bb E_{\nu_U^n}\left[\int_0^t (\mc Y^n_s)^4 ds\right]\\
&\quad \le e_0^n -4a\theta \int_0^t e_s^n ds + C_2a\{ H_n(f_0^n|\nu_U^n) +\sqrt{n}t\}, 
\end{align*}
for all $n\ge N_0$ and $t\ge0$,
where $\mc Y_t^n:=\mc Y^n(\eta_t^n)$ and $e_t^n:=\sqrt{n} \bb E_{\nu_U^n}[(\mc Y_t^n)^2]$.
We use the elementary inequality $x^4\ge4A(x^2-A)$ for
any $x\in\R$ and $A>0$ to bound the left-hand side.
Letting $x=\mc Y_s^n$ and $A=|\theta|(8\gamma^2)^{-1}$,
the left-hand side is bounded below by
\begin{align*}
8a\gamma^2\sqrt{n} \bb E_{\nu_U^n}\left[\int_0^t (\mc Y^n_s)^4 ds\right] + 4a|\theta| \int_0^t e_s^n ds - \dfrac{a|\theta|^2}{2\gamma^2}\sqrt n t,
\end{align*}
and thus
\begin{align*}
&8a\gamma^2\sqrt{n} \bb E_{\nu_U^n}\left[\int_0^t (\mc Y^n_s)^4 ds\right]  - \dfrac{a|\theta|^2}{2\gamma^2}\sqrt n t\\
&\quad \le e_0^n -4a(\theta+|\theta|) \int_0^t e_s^n ds + C_2a\{ H_n(f_0^n|\nu_U^n) +\sqrt{n}t\}, 
\end{align*}
It follows from these bounds and the fact that $f_0^n=1$ that
\begin{align*}
8a\gamma^2\sqrt{n} \bb E_{\nu_U^n}\left[\int_0^t (\mc Y^n_s)^4 ds\right]
\le e_0^n +C_3a\sqrt{n}t,
\end{align*}
for some constant $C_3>0$, which is independent of $n$ and $t$.
Letting $\tilde\mu_t^n:=t^{-1}\int_0^t \mu_s^n ds$, the last estimate can be rewritten as
\begin{align*}
8a\gamma^2\sqrt n E_{\tilde\mu_t^n}[(\mc Y^n)^4] \le \dfrac{e_0^n}{t} +C_3a\sqrt{n}.
\end{align*}
Since $\tilde\mu_t^n$ converges to $\mu_\ss^n$ as $t\to\infty$,
we obtain the desired moment bound.

The tightness of the sequence $\{\alpha^n\}_{n\in\N}$ immediately
follows from the moment bound.
\end{proof}

As we established the relative compactness of the sequence
$\{\alpha^n\}_{n\in\N}$, to prove Theorem \ref{main} it remains to
provide some characterization of limit points.  In the next lemma, we
show that any weak limit point is the ergodic measure of the diffusion
\begin{align}\label{sde}
\d\mc Y_t=-aV'(\mc Y_t)\d t +\sqrt{a} \d W_t,
\end{align}
where $(W_t)_{t\ge0}$ is a standard Brownian motion.
To state its assertion, let $\alpha P_t$ be the distribution of the diffusion \eqref{sde}
starting from a Borel probability measure $\alpha$ on $\R$
at time $t\ge0$.

\begin{lemma}\label{lem5}
Let $\tilde\alpha$ be any weak limit point of the sequence $\{\alpha^n\}_{n\in\N}$.
For sufficiently small $a>0$,
we have $\tilde\alpha=\tilde\alpha P_t$ for any $t\ge0$. In particular, $\tilde\alpha=\alpha^*$.
\end{lemma}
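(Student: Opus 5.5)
The plan is to use stationarity together with the dynamical convergence of \cite[Theorem 2.3]{DL}. Fix a subsequence $n_k$ with $\alpha^{n_k}\to\tilde\alpha$. Start the process generated by $\sqrt{n}\,L_n$ from $\mu_\ss^n$. Since this measure is invariant, the law of $\mc Y^n_t:=\mc Y^n(\eta^n_t)$ equals $\alpha^n$ for every $t\ge0$. The hypotheses of \cite[(i) of Theorem 2.3]{DL} are an initial entropy bound of order $\sqrt n$ with respect to $\nu_U^n$ and a convergent initial law of the magnetization. Lemma \ref{lem4} gives exactly the entropy bound $H_n(f_\ss^n|\nu_U^n)\le C\sqrt n$, and along the subsequence the initial law $\alpha^{n_k}$ converges to $\tilde\alpha$. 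Taking $a$ small enough for that theorem to hold, the processes $(\mc Y^{n_k}_t)_{t\ge0}$ converge in distribution (in Skorokhod space, or at least in finite-dimensional distributions) to the solution of \eqref{sde} with initial law $\tilde\alpha$.

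Next I would evaluate at a fixed time $t$. The marginal of the limit at time $t$ is $\tilde\alpha P_t$. The diffusion \eqref{sde} has continuous paths, so the projection at time $t$ is a continuity point almost surely, and the marginal of $\mc Y^{n_k}_t$ converges to $\tilde\alpha P_t$. This marginal is $\alpha^{n_k}$, which converges to $\tilde\alpha$. Hence $\tilde\alpha=\tilde\alpha P_t$ for all $t\ge0$. If the result of \cite{DL} is stated only under a specific initial condition, for example product or $\nu_U^n$-type initial measures, I would check that its proof uses only the $O(\sqrt n)$ entropy bound and initial convergence. This verification is the step I expect to be the main obstacle. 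The fourth-moment bound of Lemma \ref{lem4} helps here, since it makes any moment assumption on the initial data uniform.

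Finally I would identify $\tilde\alpha$. The SDE \eqref{sde} has a smooth drift $-aV'$ and constant diffusion coefficient $\sqrt a$, and $V'(y)y\ge y^4-C$ for large $|y|$. The process is therefore nonexplosive, strong Feller and irreducible, with generator $\mathcal L f=\frac a2 f''-aV'f'$. The measure $e^{-2V(y)}dy$ is normalizable and reversible for $\mathcal L$, because $\frac a2(e^{-2V})'=-aV'e^{-2V}$. A one-dimensional diffusion with these properties has a unique invariant probability measure, for instance by Doob's theorem or by the explicit scale/speed description. Hence $\tilde\alpha=\alpha^*$.

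Since every limit point equals $\alpha^*$ and the sequence $\{\alpha^n\}$ is tight by Lemma \ref{lem4}, this lemma finishes the proof of Theorem \ref{main}.
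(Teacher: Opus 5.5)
Your overall route (stationarity, then the dynamical limit of \cite{DL}, then uniqueness of the invariant law of \eqref{sde} via reversibility of $e^{-2V(y)}\,dy$) matches the paper's, and your uniqueness argument is fine. The gap is in the passage to a fixed time. Theorem 2.3 of \cite{DL} does not assert convergence in Skorokhod space or of finite-dimensional distributions. It asserts convergence of the laws of $(\mc Y^n_t)_{t\in[0,T]}$ as probability measures on $\bb L^p([0,T],\R)$, $p\in(1,4/3)$. On $\bb L^p$ the evaluation map $Y\mapsto Y_t$ is not even well defined, since elements are equivalence classes. So your continuous-mapping argument, which rests on continuity of the limiting paths at $t$, is unavailable, and the claim that the law of $\mc Y^{n_k}_t$ converges to $\tilde\alpha P_t$ is unjustified as written. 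The paper singles out exactly this point and handles it by mollifying in time:
\[
\mc Y^n_t=\int(\mc Y^n_t-\mc Y^n_s)\iota^\e_{s-t}\,ds+\int\mc Y^n_s\iota^\e_{s-t}\,ds .
\]
The second term is a continuous functional on $\bb L^p$. The first is small in $L^1$ uniformly in $n$, by a time-regularity estimate $\E|\mc Y^n_t-\mc Y^n_s|\le C|t-s|^{\delta}$ extracted from the modulus-of-continuity bounds in \cite[Section 7.1]{DL}, together with the analogous bound for the limiting diffusion.

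You could also close the gap without any microscopic regularity estimate, by using stationarity once more. For bounded Lipschitz $\phi$ and $g\in C([0,T])$, the functional $Y\mapsto\int_0^T\phi(Y_s)g(s)\,ds$ is bounded and continuous on $\bb L^p([0,T],\R)$. By stationarity, its expectation under the microscopic dynamics equals $\alpha^{n}(\phi)\int_0^Tg(s)\,ds$. Passing to the limit gives $\int_0^T[\tilde\alpha P_s(\phi)-\tilde\alpha(\phi)]\,g(s)\,ds=0$ for all $g$, hence $\tilde\alpha P_s(\phi)=\tilde\alpha(\phi)$ for almost every $s$, and then for every $s$ because $s\mapsto\tilde\alpha P_s(\phi)$ is continuous.

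By contrast, the step you flagged as the main obstacle is mild. The entropy hypothesis in \cite{DL} is relative to $\nu^n_{1/2}$, not $\nu^n_U$. However, $H_n(\mu|\nu^n_{1/2})=H_n(\mu|\nu^n_U)+E_\mu[nU(\overline{m_n}/n)]-\log Z^n_U\le H_n(\mu|\nu^n_U)+C\sqrt n\,E_\mu[(\mc Y^n)^2]$, since $0\le U(\varrho)\le C\varrho^2$. So Lemma \ref{lem4} gives it directly. The paper argues instead, via \cite[Proposition 2.14]{DL}, that only \eqref{sufDL} is used.
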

\begin{proof}
To prove this lemma, we use the convergence result of the magnetization process,
which is established in \cite{DL}. Let $\{\mu_n\}_{n\in\N}$ be a sequence of probability measures on $\Omega_n$ satisfying
\begin{align}\label{assDL}
H_n(\mu_n|\nu_{1/2}^n)=O(\sqrt n), \quad \lim_{n\in\N}\mu_n(\mc Y^n\in\cdot)=\alpha \quad \text{weakly},
\end{align}
for some Borel probability measure $\alpha$ on $\R$.
We denote by $(\eta_t^n)_{t\ge0}$  the process generated by $\sqrt n L_n$, which starts from $\mu_n$ and define the magnetization process by $\mc Y_t^n=\mc Y^n(\eta_t^n)$.
It has been proved in \cite[Theorem 2.3]{DL} that, for any $T>0$ and $p\in(1,4/3)$,
the law of $(\mc Y_t^n)_{t\in[0,T]}$ under $\bb P_{\mu_n}^n$,
seen as a probability measures on $\bb L^p([0,T], \R)$, converges to the measure $\bb Q_\alpha$ in the topology of measures on  $\bb L^p([0,T], \R)$.

To use this result, we should verify that the condition \eqref{assDL} for an initial distribution is in force.
However, the following condition, together with the weak convergence of
$\{\mu_n\}_{n\in\N}$ is enough to deduce the convergence of the laws of $(\mc Y_t^n)_{t\in[0,T]}$:
\begin{align}\label{sufDL}
\sup_{n\ge1}\left\{\dfrac{H_n(\mu_n|\nu_U^n)}{\sqrt n} + E_{\mu_n}[(\mc Y^n)^2] \right\} < \infty.
\end{align}
Indeed, the entropy bound in \eqref{assDL} is used only for proving
\eqref{sufDL}. This is demonstrated in \cite[Proposition 2.14]{DL}.
An actual proof of the convergence result is based on the bounds in
\eqref{sufDL}, see \cite[Section 7]{DL}.  We consider the case where
$\mu_n=\mu_\ss^n$, and in this case the condition \eqref{sufDL} was
proved in Lemma \ref{lem4}.  In conclusion, we can deduce the
sub-sequential convergence of the law of $(\mc Y_t^n)_{t\in[0, T]}$
when the process starts from the stationary state $\mu_\ss^n$.

Let $\tilde \alpha$ be any weak limit point of $\{\alpha^n\}_{n\in\N}$ and take a subsequence of $\{\alpha^n\}_{n\in\N}$ converging to $\tilde\alpha$.
For simplicity, we denote its subsequence by $\{\alpha^n\}_{n\in\N}$ again.
Since the process starts from the stationary state, for any $t>0$
the law of $\mc Y_t^n$ is given by $\alpha^n$, which converges to
$\tilde\alpha$ as $n\to\infty$. Therefore, to conclude the proof, it is enough to show that the law of $\mc Y_t^n$ also converges to $\tilde\alpha P_t$ as $n\to\infty$. This is almost obvious from
the convergence of the law of the magnetization processes.
However, one must pay attention because the convergence
holds in the topology of $\bb L^p([0, T], \R)$, not in the Skorokhod topology. 
Although the convergence at a fixed time is essentially proved in \cite{DL}, we prove it in the next paragraph for clarity.

As mentioned in the previous paragraph, we show that
for any $t>0$ the law of $\mc Y_t^n$ converges to $\tilde\alpha P_t$ as $n\to\infty$.
To prove this claim, take any $0<t<T$ and let $\iota^\e:[-\e,\e]\to[0,\infty)$ be a mollifier.
We decompose $\mc Y_t^n$ into
\begin{align}\label{eq1}
\mc Y_t^n=\int_{t-\e}^{t+\e} (\mc Y_t^n - \mc Y_s^n)\iota_{s-t}^\e ds + \int_{t-\e}^{t+\e} \mc Y_s^n\iota_{s-t}^\e ds.
\end{align}
It is proved in \cite[Subsection 7.1]{DL} that there exist $C_1>0$ and $\delta_1>0$ such that
\begin{align*}
\bb E_{\mu_n}^n\left[\left|\mc Y_t^n - \mc Y_s^n\right| \right] \le C_1(t-s)^{\delta_1},
\end{align*}
uniformly in $n\in\N$ and $0<s\le t \le T$.
This bound is not clearly stated in \cite[Subsection 7.1]{DL}
but it is a direct consequence of several estimates on the modulus of continuity.
Therefore, the first term in the right-hand side of \eqref{eq1}
vanishes in $L^1$-sense as $\e\to0$ uniformly in $n\in\N$.
On the other hand, for any fixed $\e>0$
the second term in the right-hand side of \eqref{eq1} converges
in law to $\int_{t-\e}^{t+\e} \tilde{\mc Y}_s\iota_s^\e ds$ as $n\to\infty$, where
$(\tilde{\mc Y}_u)_{u\ge0}$ is a solution to \eqref{sde}
with the initial distribution $\tilde\alpha$.
Similarly, as we have the bound
\begin{align*}
\bb E\left[\left|\mc Y_t - \mc Y_s\right| \right] \le C_2(t-s)^{\delta_2},
\end{align*}
for some $C_2>0$ and $\delta_2>0$ uniformly in $0<s\le t \le T$,
$\int_{t-\e}^{t+\e} \tilde{\mc Y}_s\iota_s^\e ds$ converges in the $L^1$-sense
to $\tilde{\mc Y}_t$ as $\e\to0$.
Thus, the law of $\mc Y_t^n$ converges to $\tilde\alpha P_t$ as $n\to\infty$,
which completes the proof.

The latter statement follows from the fact that $\alpha^*$
is ergodic with respect to the diffusion \eqref{sde}, see \cite[Section 1.5]{Berglund}.
\end{proof}

We are now in the position to prove Theorem \ref{main}.

\begin{proof}[Proof of Theorem \ref{main}]
Assume that $a$ is sufficiently small so that Lemmata \ref{lem4} and \ref{lem5}
are in force.

The statement of the theorem is an immediate consequence of Lemmata
\ref{lem4} and \ref{lem5}.  Indeed, by Lemma \ref{lem4} the sequence
$\{\alpha^n\}$ has a converging subsequence and by Lemma \ref{lem5}
its weak limit point coincides with $\alpha^*$.  Therefore the
sequence $\{\alpha^n\}_{n\in\N}$ converges to $\alpha^*$ as
$n\to\infty$.  Thus, the proof of the theorem is complete.
\end{proof}

\begin{proof}[Proof of Theorem \ref{thm2}]
The result follows from the proof of \cite[Proposition 8.1]{DL}.  The
proof of this proposition relies on the bound for the entropy of
$\mu_n$ (in our setting $\mu_{ss}^n$) with respect to $\nu_U^n$
established in Lemma \ref{lem4}. The proof is written taking as
reference measure $\nu^n_g$, but it holds as it is for $\nu^n_U$. 
\end{proof}

\begin{proof} [Proof of Corollary  \ref{cor1}]
Fix $T>0$. Let $(\eta_t^n)_{t\ge0}$ be the process generated by
$\sqrt n L_n$, which starts from $\mu_\ss^n$ and let
$\mtt Y_t^n(H)=\mtt Y^n(H;\eta_t^n)$.  Since $\mu_\ss^n$ is the
stationary state, we have
\begin{align*}
E_{\mu_\ss^n}\left[ \left| \mtt Y^n(H) - \lan H \ran \mc Y^n \right|\right] = \dfrac{1}{T}\E_{\mu_\ss^n}\left[ \int_0^T \left| \mtt Y_s^n(H) - \lan H \ran \mc Y_s^n \right| ds \right].
\end{align*}
By the reason mentioned after \eqref{sufDL}, (i) of Theorem 2.3 (see (7.3)) of \cite{DL} is valid, that is, the right-hand side of the last expression vanishes as $n\to\infty$. Therefore, we have
\begin{align*}
\lim_{n\to\infty}E_{\mu_\ss^n}\left[ \left| \mtt Y^n(H) - \lan H \ran \mc Y^n \right|\right] = 0.
\end{align*}
The conclusion follows from Theorem \ref{main}, which completes the
proof of  the corollary.
\end{proof}

\section{Logarithmic Sobolev inequality for $\nu_U^n$}\label{sec:lsi}

In this section, we prove the following logarithmic Sobolev inequality.

\begin{theorem}\label{thm1}
There exists a constant $C>0$ independent of $n$ such that
for any density $f:\Omega_n\to\R$ with respect to $\nu_U^n$
\begin{align*}
H_n(f|\nu_U^n)\le C \left( n^2D_n^\ex(f;\nu_U^n) + \sqrt{n} D_n^\G(f;\nu_U^n)\right).
\end{align*}
In particular, for any density $f:\Omega_n\to\R$ with respect to $\nu_U^n$
\begin{align*}
H_n(f|\nu_U^n)\le C \sqrt{n}D_n(f;\nu_U^n).
\end{align*}
\end{theorem}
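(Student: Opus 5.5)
We would prove Theorem \ref{thm1} by decomposing the relative entropy along the magnetization. Since $\nu_U^n$ depends on $\eta$ only through $\overline{m_n}$, conditioned on $\{\overline{m_n}=m\}$ it is the uniform measure on configurations with $k=m+n/2$ particles. Write $\bar\nu^n_U$ for the law of $k$ under $\nu_U^n$, which is a tilted binomial $\binom{n}{k}2^{-n}e^{nU((k-n/2)/n)}/Z_U^n$. For a density $f$ set $\bar f(k):=E_{\nu_U^n}[f\mid \sum_x\eta(x)=k]$. Then
\begin{align*}
H_n(f|\nu_U^n)=\sum_k \bar\nu^n_U(k)\,\bar f(k)\, H\big(f/\bar f(k)\,\big|\,\nu_{n,k}\big) + H(\bar f|\bar\nu^n_U),
\end{align*}
where $\nu_{n,k}$ is the canonical uniform measure with $k$ particles.

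The first term is handled by the uniform logarithmic Sobolev inequality of Yau for the symmetric simple exclusion on the torus. The canonical measures satisfy an LSI with constant $Cn^2$ uniformly in $k$, so this term is bounded by $Cn^2\sum_k \bar\nu^n_U(k)\bar f(k) D^\ex(f/\bar f(k);\nu_{n,k})$. Summing over $k$ gives $Cn^2 D_n^\ex(f;\nu_U^n)$, because the exclusion moves preserve $k$ and the tilt is constant on each hyperplane.

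For the second term we need an LSI for the one-dimensional measure $\bar\nu^n_U$ with respect to a birth–death chain on $\{0,\dots,n\}$. We take the rates to be the projections of the Glauber rates, or more simply rates reversible for $\bar\nu^n_U$, for instance $b(k)=(n-k)\,c_+$ and $d(k)=k\,c_-$ with $c_\pm$ bounded above and below. Such rates are comparable to the averaged Glauber flip rates, since $c_x\in[(1-\gamma)^2,(1+\gamma)^2]$ is bounded away from $0$ for $\gamma\le 1/2$. Next we compare the birth–death Dirichlet form of $\sqrt{\bar f}$ with the Glauber Dirichlet form of $f$. By convexity (Jensen for the jointly convex map $(a,b)\mapsto(\sqrt a-\sqrt b)^2$), the sum over $x$ of flips from $k$ to $k\pm1$, averaged under the canonical measures, dominates $\bar D(\bar f)$ up to constants. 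Here we use that the flip map sends the uniform measure on $\{k\}$ pushed forward through the $n-k$ empty sites to $(n-k)$ times the uniform measure on $\{k+1\}$ counted with multiplicity $k+1$. This yields $\bar D(\bar f)\le C D_n^\G(f;\nu_U^n)$.

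The core step, and the main obstacle, is the birth–death LSI $H(g|\bar\nu^n_U)\le C\sqrt n\,\bar D(g)$ with constant of order $\sqrt n$ uniformly in $n$. By \eqref{W_properties}, in the variable $y=(k-n/2)/n^{3/4}$ the measure $\bar\nu^n_U$ has density roughly $\exp\{-n W((k)/n)\}\approx\exp\{-2(\theta y^2+y^4/2)\}$, a non-convex quartic if $\theta<0$ but $n$-independent. The chain with rates of order $n$ and steps $n^{-3/4}$ acts on $y$ as a diffusion with generator $n\cdot n^{-3/2}\partial_y^2=n^{-1/2}\partial_y^2$, which explains the $\sqrt n$. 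The plan is to use a Hardy-type (Bobkov–Götze/Miclo) criterion for birth–death chains. We bound $\sup_{k>k_0}\bar\nu(k,n]\log(1/\bar\nu(k,n])\sum_{j=k_0}^{k}(\bar\nu(j)b(j))^{-1}$ and its left analogue by $C\sqrt n$, with $k_0$ the median. This requires uniform (in $n$) tail estimates of $\bar\nu^n_U$ from Stirling plus the expansion of $W$: the Gaussian regime $|k-n/2|\lesssim n^{3/4}$ and the large deviation regime where $W^{(4)}\ge c_W$ gives $nW(\rho)\ge c\, n(\rho-1/2)^4-C$. We must handle sums of $1/\bar\nu(j)$ carefully near the boundary $k\approx n$, where the Hardy quantity is dominated by the super-exponentially small tails. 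The case $\theta<0$ (double well at scale $n^{-1/4}$, depth $O(1)$) only affects constants.
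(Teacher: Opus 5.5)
Your proposal is correct and is essentially the paper's proof. It has the same three steps: the entropy chain rule over the hyperplanes $\Omega_{n,m}$ combined with Yau's uniform $O(n^2)$ logarithmic Sobolev inequality for the canonical exclusion measures (Lemma \ref{lem1}); a $C\sqrt n$ logarithmic Sobolev inequality for a birth--death chain reversible for $\pi_n$, via Miclo's Hardy-type criterion centered at $n/2$ (Lemmas \ref{lem2}, \ref{lem6}); and a comparison with $D_n^\G$ (Lemma \ref{lem3}), where your joint-convexity/Jensen step is a slightly cleaner substitute for the paper's Cauchy--Schwarz argument.

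Two remarks. Your factors $c_\pm$ cannot be constants if the chain is to be reversible; they must be the $k$-dependent bounded weights $e^{n(U_0((k\pm1)/n)-U_0(k/n))/2}$, which is the paper's choice. Also, the Hardy bound you only outline is where the real work lies: Section \ref{sec:pflogsobolev} splits $l$ into $[n/2,n/2+c_2n^{3/4}]$, $[n/2+c_2n^{3/4},3n/4]$ and $[3n/4,n]$. In the middle range it bounds both $\pi_n([l,n])$ and $\sum_{k\le l}(\pi_n(k)d_n(k))^{-1}$ relative to $e^{\mp nW(l/n)}$, so the exponentials cancel exactly.
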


The proof of Theorem \ref{thm1} proceeds in several steps
and provided at the end of this section.
The first step is a simple application of the logarithmic Sobolev inequality for the exclusion part.

\begin{lemma}\label{lem1}
There exists a constant $C>0$ independent of $n$ such that
for any density $f:\Omega_n\to\R$ with respect to $\nu_U^n$
\begin{align}\label{ineq1}
H_n(f|\nu_U^n) \le Cn^2 D_n^\ex(f;\nu_U^n) + \sum_{m=0}^n\lan f \ran_{\nu_{1/2}^{n,m}}\log \lan f \ran_{\nu_{1/2}^{n,m}} \nu_U^n(\Omega_{n,m}).
\end{align}
\end{lemma}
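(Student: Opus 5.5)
The plan is to decompose the entropy along the hyperplanes of fixed particle number and apply the Yau log-Sobolev inequality on each hyperplane. Write $\Omega_{n,m}=\{\eta:\sum_x\eta(x)=m\}$ and let $\nu_{1/2}^{n,m}$ be the uniform measure on $\Omega_{n,m}$.

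The key observation is that the tilt $e^{nU(\overline{m_n}/n)}$ depends on $\eta$ only through the total particle number $m=\sum_x\eta(x)$. Hence $\nu_U^n$ conditioned on $\Omega_{n,m}$ is exactly $\nu_{1/2}^{n,m}$, and
\[
\nu_U^n=\sum_{m=0}^n \nu_U^n(\Omega_{n,m})\,\nu_{1/2}^{n,m}.
\]

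With $\langle f\rangle_m:=\langle f\rangle_{\nu_{1/2}^{n,m}}$, the standard chain rule for entropy gives
\[
\int f\log f\,d\nu_U^n=\sum_{m}\nu_U^n(\Omega_{n,m})\Big[\int \tfrac{f}{\langle f\rangle_m}\log\tfrac{f}{\langle f\rangle_m}\,d\nu_{1/2}^{n,m}\,\langle f\rangle_m\Big]+\sum_m \nu_U^n(\Omega_{n,m})\langle f\rangle_m\log\langle f\rangle_m .
\]
This is simply $f\log f=f\log(f/\langle f\rangle_m)+f\log\langle f\rangle_m$ integrated on each hyperplane. Hyperplanes with $\langle f\rangle_m=0$ contribute nothing and can be dropped. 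The second sum is precisely the last term in \eqref{ineq1}.

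For the first sum, apply on each $\Omega_{n,m}$ the logarithmic Sobolev inequality for the symmetric simple exclusion process on the torus with fixed particle number, due to Yau (see \cite{kl}). Its constant is $Cn^2$, uniform in $m$. We use it on the density $f/\langle f\rangle_m$ with respect to $\nu_{1/2}^{n,m}$:
\[
\int \tfrac{f}{\langle f\rangle_m}\log\tfrac{f}{\langle f\rangle_m}\,d\nu_{1/2}^{n,m}\le Cn^2\cdot\tfrac12\sum_x\int\Big\{\sqrt{f(\eta^{x,x+1})}-\sqrt{f(\eta)}\Big\}^2 d\nu_{1/2}^{n,m}\,\big/\langle f\rangle_m .
\]
Multiplying by $\langle f\rangle_m\,\nu_U^n(\Omega_{n,m})$ and summing over $m$ recombines the right-hand side into $Cn^2D_n^\ex(f;\nu_U^n)$. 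This works because exchanges preserve $\Omega_{n,m}$, so the Dirichlet form with respect to $\nu_U^n$ splits as the weighted sum of the hyperplane Dirichlet forms.

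The only nontrivial input is the uniform-in-$m$ constant $Cn^2$ for the canonical exclusion measures, which is Yau's theorem. I would quote it rather than reprove it. The extreme cases $m=0,n$ are trivial, since there $\Omega_{n,m}$ is a single point and both sides vanish. Everything else is bookkeeping.
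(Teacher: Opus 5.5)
Your proposal is correct and follows the paper's proof essentially step for step. The paper likewise splits $H_n(f|\nu_U^n)$ over the hyperplanes $\Omega_{n,m}$ (on which $\nu_U^n$ conditions to $\nu_{1/2}^{n,m}$), writes $f\log f = f\log f^{n,m} + f\log\langle f\rangle_{\nu_{1/2}^{n,m}}$ with $f^{n,m}=f/\langle f\rangle_{\nu_{1/2}^{n,m}}$, applies Yau's uniform-in-$m$ logarithmic Sobolev inequality to $f^{n,m}$, and recombines the hyperplane Dirichlet forms into $D_n^\ex(f;\nu_U^n)$ using that exchanges preserve the particle number.
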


\begin{proof}
Fix a density $f:\Omega_n\to\R$ with respect to $\nu_U^n$.
We first decompose $\Omega_n$ and $\nu_{1/2}^n$ by the magnetizations.
For each $m=0,\ldots,n$, let
$$\Omega_{n,m}:=\left\{\eta\in\Omega_n \,\middle|\, \sum_{x\in\T_n}\eta(x)=m\right\}$$
and let $\nu_{1/2}^{n,m}$ be the uniform measure on $\Omega_{n,m}$.
Note that
\begin{align*}
H_n(f|\nu_U^n)
= \sum_{m=0}^n \int_{\Omega_{n,m}} f\log f d\nu_U^n
= \sum_{m=0}^n \nu_U^n(\Omega_{n,m})\int_{\Omega_{n,m}} f\log f d\nu_{1/2}^{n,m}.
\end{align*}
Letting $f^{n,m}:=f/\lan f \ran_{\nu_{1/2}^{n,m}}$ gives us
\begin{align*}
\int_{\Omega_{n,m}} f\log f d\nu_{1/2}^{n,m}
&=  \int_{\Omega_{n,m}}   f \left( \log f^{n,m} + \log \lan f \ran_{\nu_{1/2}^{n,m}} \right) d\nu_{1/2}^{n,m}\\
&= \lan f \ran_{\nu_{1/2}^{n,m}}\int_{\Omega_{n,m}}  f^{n,m} \log f^{n,m} d\nu_{1/2}^{n,m} + \lan f \ran_{\nu_{1/2}^{n,m}}\log \lan f \ran_{\nu_{1/2}^{n,m}}.
\end{align*}
Since $f^{n,m}$ is a density with respect to $\nu_{1/2}^{n,m}$,
it follows from the logarithmic Sobolev inequality
for the exclusion process (see \cite[Theorem 4]{Yau}) that
\begin{align*}
\int_{\Omega_{n,m}} f^{n,m} \log f^{n,m} d\nu_{1/2}^{n,m} \le Cn^2 D_n^\ex(f^{n,m};\nu_{1/2}^{n,m}),
\end{align*}
for some constant $C>0$, which is independent of $n$ and $m$.
Therefore we have
\begin{align*}
\sum_{m=0}^n \nu_U^n(\Omega_{n,m})
\lan f \ran_{\nu_{1/2}^{n,m}}D_n^\ex(f^{n,m};\nu_{1/2}^{n,m})
=\sum_{m=0}^n \nu_U^n(\Omega_{n,m}) D_n^\ex(f;\nu_{1/2}^{n,m})
=D_n^\ex(f;\nu_U^n).
\end{align*}
Collecting all previous estimates, we obtain \eqref{ineq1}.
\end{proof}

In view of Lemma \ref{lem1}, we need to estimate the sum appearing in \eqref{ineq1}, which is the relative entropy of $\lan f\ran_{\nu_{1/2}^{n,\cdot}}$ with respect to $\nu_U^n(\Omega_{n,\cdot})$.
To obtain estimates on this relative entropy,
we introduce an auxiliary birth-death process on $I_n:=\{0,1,\ldots,n\}$.
For each $m\in I_n$, define the jump rates $b_n(m)$ and $d_n(m)$ by
\begin{align*}
b_n(m)&:=(n-m)e^{n(U_0((m+1)/n) - U_0(m/n))/2},\\
d_n(m)&:=me^{n(U_0((m-1)/n) - U_0(m/n))/2},
\end{align*}
with the convention $b_n(n)=d_n(0):=0$.
The generator of the birth-death process is given by
\begin{align*}
\mf L_n^{\bd}\mf f(m) =b_n(m)\{\mf f(m+1) - \mf f(m)\} + d_n(m)\{\mf f(m-1) - \mf f(m)\},
\end{align*}
for functions $\mf f:I_n\to\R$.
Then the measure defined by
$\pi_n(m):=\nu_U^n(\Omega_{n,m})$ satisfies the detailed balance condition
\begin{align*}
\pi_n(m)b_n(m)=\pi_n(m+1)d_n(m+1).
\end{align*}
Therefore, $\pi_n$ is reversible with respect to $\mf L_n^\bd$.

For each density $\mf f:I_n\to\R$ with respect to $\pi_n$,
we define the relative entropy and the Dirichlet form with respect to $\pi_n$ by
\begin{align*}
\mf H_n(\mf f|\pi_n)&:=\sum_{m=0}^n \mf f(m)\log \mf f(m) \pi_n(m),\\
\mf D_n^\bd(\mf f;\pi_n)&:=\dfrac12\sum_{m=0}^{n-1}b_n(m)\left\{\sqrt{\mf f(m+1)} - \sqrt{\mf f(m)} \right\}^2 \pi_n(m).
\end{align*}

The next result establishes a logarithmic Sobolev inequality for the birth-death process.

\begin{lemma}\label{lem2}
There exists a constant $C>0$ independent of $n$ such that
for any density $\mf f:I_n\to[0,\infty)$ with respect to $\pi_n$
\begin{align*}
\mf H_n(\mf f|\pi_n) \le C\sqrt{n} \mf D_n^\bd(\mf f;\pi_n).
\end{align*}
\end{lemma}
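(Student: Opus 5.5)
The plan is to derive the inequality from the Hardy-type criterion of Bobkov--G\"otze and Miclo for logarithmic Sobolev inequalities of one-dimensional birth--death chains. Let $M_n$ be a median of $\pi_n$; by the symmetry $U_0(\rho)=U_0(1-\rho)$ we may take $M_n=\lfloor n/2\rfloor$. That criterion bounds the optimal constant $C_n$ in $\mf H_n(\mf f|\pi_n)\le C_n \mf D_n^\bd(\mf f;\pi_n)$ by a universal multiple of $\max(B_n^+,B_n^-)$, where
\begin{align*}
B_n^+ := \sup_{k> M_n} \Big(\sum_{j=M_n+1}^{k} \frac{1}{\pi_n(j-1)b_n(j-1)}\Big)\,\pi_n([k,n])\log\frac{1}{\pi_n([k,n])}
\end{align*}
and $B_n^-$ is the mirror quantity on $\{0,\ldots,M_n\}$. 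By symmetry it suffices to show $B_n^+\le C\sqrt n$.

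\textbf{Step 1: explicit estimates on $\pi_n$ and $b_n$.} By Stirling's formula and the definition of $\nu_U^n$,
\begin{align*}
\pi_n(m)=\frac{1}{Z_U^n}\binom{n}{m}2^{-n}e^{nU_0(m/n)}=\frac{\kappa_n(m)}{\tilde Z_n}\,e^{-nW(m/n)},
\end{align*}
with $\kappa_n(m)$ bounded above and below by $c\,[m(n-m)/n]^{-1/2}$. Write $\rho=m/n$ and $k=m-n/2$. The properties \eqref{W_properties} and Taylor's formula give
\begin{align*}
nW(\rho)\ \ge\ \frac{2\theta k^2}{n^{3/2}}+\frac{c_W k^4}{24\,n^3},
\end{align*}
together with a matching upper bound for $|k|\le \delta n$ and growth of order $n$ away from $1/2$. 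Hence $\tilde Z_n\asymp n^{1/4}$ and $\pi_n(m)\asymp n^{-3/4}$ for $|k|\le A n^{3/4}$, where the constants depend on $A$ and $\theta$ only.

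For the rates, $b_n(m)=(n-m)\exp\{\tfrac12 U_0'(\xi)\}$ with $U_0'$ bounded, so $b_n(m)\asymp n-m$. By detailed balance, $\pi_n(j-1)b_n(j-1)$ equals the edge weight $\sqrt{\pi_n(j-1)\pi_n(j)\,b_n(j-1)d_n(j)}$.

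\textbf{Step 2: the bulk region $M_n<k\le M_n+An^{3/4}$.} Here the sum in $B_n^+$ has at most $An^{3/4}$ terms, each of size $O(n^{3/4}/n)=O(n^{-1/4})$, so the sum is $O(A\,n^{1/2})$. The factor $\pi_n([k,n])\log(1/\pi_n([k,n]))$ is at most $e^{-1}$. The product is therefore $O(\sqrt n)$, which is the claimed order. This matches the continuum heuristic: in the variable $y=k/n^{3/4}$, the Dirichlet form is $n\cdot n^{-3/2}\int|\partial_y\sqrt f|^2\,d\alpha$, and the limiting density $e^{-2V}$ satisfies a log-Sobolev inequality with an $O(1)$ constant.

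\textbf{Step 3: the tail region $k>M_n+An^{3/4}$.} This is the step I expect to be the main obstacle. The weights $1/(\pi_n b_n)$ grow like $e^{nW}$, while $\pi_n([k,n])$ decays like $e^{-nW}$, so the two must be compared through a Laplace-type estimate. We have
\begin{align*}
\sum_{j\le k}e^{nW(j/n)}\lesssim \frac{e^{nW(k/n)}}{W'(k/n)} \qquad\text{and}\qquad \pi_n([k,n])\lesssim \frac{n^{-1/4}e^{-nW(k/n)}}{n\,W'(k/n)},
\end{align*}
both valid as long as $nW'(\rho)\gtrsim 1$. The convexity of $W'$ beyond the critical point guarantees this for $k\ge An^{3/4}$ once $A$ is large compared with $|\theta|$, since there $W'(\rho)\gtrsim (\rho-1/2)^3$. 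The product of these two bounds with the extra factor $n^{-1/4}$ is then of order
\begin{align*}
\frac{n^{1/4}}{n^2\,W'(k/n)^2}\;\cdot\; nW(k/n),
\end{align*}
where the factor $nW(k/n)$ accounts for the logarithm. Using $W'(\rho)\gtrsim(\rho-1/2)^3$ and $W(\rho)\lesssim(\rho-1/2)^4$, this is bounded by
\begin{align*}
\frac{n^{1/4}\,n^{-1}\,k^4/n^3}{k^6/n^6}=\frac{n^{9/4}}{k^2},
\end{align*}
which is $\le n^{3/4}$ for $k\ge n^{3/4}$. Consequently $B_n^+\lesssim\sqrt n$ in this region as well.

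Near the boundary $m\approx n$, the rate $b_n(m)\asymp n-m$ degenerates and $\kappa_n$ blows up. I would treat this separately, using the crude bounds $nW(\rho)\ge cn$ there and $1/b_n\le 1$. The extra $\log n$ losses are absorbed by the exponential factor $e^{-cn}$. Combining Steps 2 and 3 gives $C_n\le C\sqrt n$ uniformly in $n$, which is the statement of Lemma \ref{lem2}.
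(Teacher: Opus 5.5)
Your architecture is the same as the paper's. You use a Hardy-type criterion at the midpoint; the paper applies Miclo's Proposition~4 with base point $m_n=n/2$, and your median form coincides with it here by symmetry. You then split into a window of width $An^{3/4}$, an intermediate region, and a neighbourhood of $m=n$, as the paper does. Your Step~2 is correct and corresponds to Lemmas~\ref{lem15}, \ref{lem16} and \ref{lem9}.

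However, Step~3, which carries the whole difficulty, does not prove the claim as written.

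\emph{The tail bound is too small by a factor $n^{1/2}$.} Since $\pi_n(m)\asymp n^{-3/4}e^{-nW(m/n)}$ and consecutive sites are $1/n$ apart, the Laplace estimate gives $\pi_n([k,n])\lesssim n^{-3/4}e^{-nW(k/n)}/W'(k/n)$, not $n^{-1/4}e^{-nW(k/n)}/(nW'(k/n))$. As a sanity check, take $k-n/2=An^{3/4}$. Your own Step~1 gives $\pi_n(m)\gtrsim_A n^{-3/4}$ on the next $An^{3/4}$ sites, so $\pi_n([k,n])$ is of order one, whereas your bound is $O(n^{-1/2})$.

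\emph{The conclusion does not follow.} The powers of $n$ in the subsequent displays are inconsistent from line to line. The chain ends with $n^{9/4}/k^2\le n^{3/4}$ for $k\ge n^{3/4}$, which is not $O(\sqrt n)$, so ``consequently $B_n^+\lesssim\sqrt n$'' is a non sequitur.

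\emph{The correct bookkeeping does give $\sqrt n$.} Put $x=k/n-\frac12$ and $y=xn^{1/4}\ge A$, with $A$ large. Then $W'(k/n)\asymp x^3$ and $nW(k/n)\lesssim y^4$, and
\begin{align*}
\sum_{j\le k}\frac{1}{\pi_n(j)d_n(j)}\lesssim\frac{n^{-1/4}e^{nW(k/n)}}{W'(k/n)},\qquad
\pi_n([k,n])\lesssim\frac{n^{-3/4}e^{-nW(k/n)}}{W'(k/n)} .
\end{align*}
For $A$ large the second right-hand side is below $e^{-1}$. Since $\Psi$ is increasing on $[0,e^{-1}]$, the product in $B_n^+$ is then at most
\begin{align*}
\frac{nW(k/n)+\log\big(n^{3/4}W'(k/n)\big)+C}{n\,W'(k/n)^2}\lesssim\sqrt n\,\frac{y^4+\log y}{y^6}\lesssim\frac{\sqrt n}{A^2}.
\end{align*}
So the true size is $W/W'^2\asymp x^{-2}$, with no extra power of $n$. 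This is what Lemmas~\ref{lem12}, \ref{lem13} and \ref{lem8} do. The paper even uses the cruder tail bound $Ky^{-1}e^{-nW(l/n)}$, which still suffices.

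Two further points need repair.

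\emph{The forward Laplace bound.} The estimate $\sum_{j\le k}e^{nW(j/n)}\lesssim e^{nW(k/n)}/W'(k/n)$ is not justified by the condition $nW'\gtrsim 1$. The sum starts at $n/2$, where $W$ is not monotone when $\theta<0$, so the curvature must be controlled. The bound does hold for $y\ge A$. One way is to split at $x/2$ and use convexity of $W$ on $[\frac12+\frac x2,\frac12+x]$; another is Lemma~\ref{lem14}, as in the paper.

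\emph{The region near $m=n$.} Nothing is ``absorbed by $e^{-cn}$'' there. The sum of $1/(\pi_n d_n)$ is itself of size $e^{+nW(k/n)}$, and $\log(1/\pi_n([k,n]))$ is of order $n$. You therefore need the matched bounds $\sum_{j\le k}(\pi_n(j)d_n(j))^{-1}\lesssim n^{-3/4}\sqrt{n-k}\,e^{nW(k/n)}$ and $\pi_n([k,n])\lesssim n^{-1/4}e^{-nW(k/n)}/\sqrt{n-k}$. In their product the exponentials and the degenerate factors $\sqrt{n-k}$ cancel, leaving $O(1)$; this is Lemmas~\ref{lem10}, \ref{lem11} and \ref{lem7}.
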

\begin{proof}
The proof of Lemma \ref{lem2} relies on \cite[Proposition 4]{Miclo1}
and related computations provided in Section \ref{sec:pflogsobolev}.
To introduce the result \cite[Proposition 4]{Miclo1},
let $\lambda_n$ be the logarithmic Sobolev constant
\begin{align}\label{ls1}
\lambda_n:=\inf\left\{\dfrac{\mf D_n^\bd(\mf f;\pi_n)}{\mf H_n(\mf f|\pi_n)} \,\middle|\, \mf f:I_n\to[0,\infty), E_{\pi_n}[\mf f]=1\right\}.
\end{align}
For each $m=0,\ldots, n$, we also define
\begin{align*}
C_n^+(m)&:=\sup_{\ell>m} \left\{\left( \sum_{k=m+1}^\ell \dfrac{1}{\pi_n(k)d_n(k)} \right) 
\pi_n([\ell,\infty))\left|\log \pi_n([\ell,\infty)) \right| \right\},\\
C_n^-(m)&:=\sup_{\ell<m} \left\{\left( \sum_{k=\ell}^{m-1} \dfrac{1}{\pi_n(k)b_n(k)} \right) 
\pi_n((-\infty,\ell])\left|\log \pi_n((-\infty,\ell])) \right| \right\},
\end{align*}
and
\begin{align*}
C_n:=\min_{m=0,\ldots,n} (C_n^-(m) \lor C_n^+(m)).
\end{align*}
It is known that $\gamma_n$ and $C_n^{-1}$ have same order.
Indeed, the following bound is given in \cite[Proposition 4]{Miclo1}
(see also \cite[Proposition 13]{Miclo2}).
\begin{align*}
\dfrac{1}{80}\dfrac{1}{C_n}\le \lambda_n \le \dfrac{4}{3}\left(1-\dfrac{\sqrt{5}}{2\sqrt{2}} \right)^{-2} \dfrac{1}{C_n}.
\end{align*}
Letting $m_n:= n/2 $, $\lambda_n$ is bounded as
$\lambda_n\ge (80(C_n^+(m_n) \lor C_n^-(m_n)))^{-1}$.
Therefore, to conclude the proof, it is enough to show that
there exists a constant $C>0$ such that for any $n$ we have $C_n^+(m_n) \lor C_n^-(m_n)\le C\sqrt{n}$. This is the content of Lemma \ref{lem6}, which concludes the proof.
\end{proof}

The proof of the following lemma is postponed to Section \ref{sec:pflogsobolev}.

\begin{lemma}\label{lem6}
There exists a constant $C$ such that $C_n^{\pm}(m_n) \leq C\sqrt{n}$ for all $n \in \N$.
\end{lemma}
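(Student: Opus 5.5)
The plan is to compare $\pi_n$ with the continuum density $e^{-2V}$ on the mesoscopic scale $m=n/2+k$, $k\sim n^{3/4}$, and to reduce $C_n^{\pm}(m_n)$ to $\sqrt n$ times a finite supremum of a one-dimensional Hardy-type quantity. By symmetry ($U_0$, $E$ and hence $W$ are symmetric about $1/2$, and $b_n(m)=d_n(n-m)$ up to the reflection), it suffices to treat $C_n^+(m_n)$.

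\emph{Step 1 (explicit form of $\pi_n$).} Write $\pi_n(m)=Z_n^{-1}\binom{n}{m}2^{-n}e^{nU_0(m/n)}$. Stirling's formula gives $\binom nm 2^{-n}=\exp\{-nE(m/n)+O(1)\}\,n^{-1/2}\,\big((m/n)(1-m/n)\big)^{-1/2}$, so $\pi_n(m)=Z_n^{-1}n^{-1/2}r_n(m)\,e^{-nW(m/n)}$ with $r_n$ bounded above and below uniformly away from the edges. Near the edges we keep $\binom nm$ exactly; there $\pi_n$ is super-exponentially small and the estimates below only improve. Using \eqref{W_properties} and Taylor's formula with the fourth-derivative bound,
\begin{align*}
nW(1/2+k/n)=\frac{2\theta k^2}{n^{3/2}}+\frac{n}{24}W^{(4)}(\xi)\frac{k^4}{n^4}\ \ge\ \frac{2\theta k^2}{n^{3/2}}+\frac{c_W k^4}{24 n^3}.
\end{align*}
Moreover $W^{(4)}(1/2)\to$ the value giving $2V$ (the prefactor $y^4$ in $2V(y)=2\theta y^2+y^4$). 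Summing over $k$ with $y=k/n^{3/4}$, this yields $Z_n\asymp n^{1/4}$, i.e.\ $\pi_n(m)\asymp n^{-3/4}e^{-nW(m/n)}$ uniformly. Since $d_n(k)\asymp n$ on the relevant range, and the exponential factor $e^{n(U_0((k-1)/n)-U_0(k/n))/2}$ is bounded above and below for $k$ not near the edges, we have $1/(\pi_n(k)d_n(k))\asymp n^{-1/4}e^{nW(k/n)}$.

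\emph{Step 2 (reduction to a continuum Hardy constant).} For $\ell=m_n+j$ with $y=j/n^{3/4}$, compare sums to integrals:
\begin{align*}
\sum_{k=m_n+1}^{\ell}\frac{1}{\pi_n(k)d_n(k)}\lesssim n^{-1/4}\,n^{3/4}\int_0^{y}e^{\Phi_n(z)}dz,\qquad \pi_n([\ell,\infty))\lesssim\int_y^\infty e^{-\Phi_n(z)}dz,
\end{align*}
where $\Phi_n(z):=nW(1/2+zn^{-3/4})$. The comparison of sums with integrals is legitimate because $\Phi_n$ varies by $O(1)$ on steps of size $n^{-3/4}$ in $z$ as long as $z\lesssim n^{1/4}$. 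This gives
\begin{align*}
C_n^+(m_n)\lesssim\sqrt n\,\sup_{y>0}\Big(\int_0^y e^{\Phi_n}\Big)\,T_n(y)\,|\log T_n(y)|,\qquad T_n(y)=\int_y^\infty e^{-\Phi_n}.
\end{align*}

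\emph{Step 3 (the uniform supremum; main obstacle).} I need the supremum bounded uniformly in $n$, over the whole range $0<y\le n^{1/4}/2$, not just for fixed $y$. On compacts this follows since $\Phi_n\to 2V$ locally uniformly. For large $y$ I would use only the lower bound $\Phi_n(z)\ge 2\theta z^2+c z^4$ (from $W^{(4)}\ge c_W$) together with a matching upper bound $\Phi_n(z)\le 2|\theta|z^2+Cz^4$, valid on $[0,n^{1/4}/2]$ since $W^{(4)}$ is bounded there. From these, via Laplace-type estimates for a convex-for-large-$z$ potential with $\Phi_n'(z)\gtrsim z^3$, we get
\begin{align*}
\int_0^y e^{\Phi_n}\lesssim \frac{e^{\Phi_n(y)}}{\Phi_n'(y)},\qquad T_n(y)\lesssim\frac{e^{-\Phi_n(y)}}{\Phi_n'(y)},\qquad |\log T_n(y)|\lesssim \Phi_n(y)+\log y,
\end{align*}
so the product is $\lesssim y^4/y^6$, which is bounded. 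The delicate part is making these Laplace estimates uniform in $n$, in particular near the edge range $m/n$ close to $0$ or $1$ where Stirling and the bounds on the rates degenerate. There I would argue directly: $\pi_n([\ell,\infty))\le e^{-cn}$, while the sum of $1/(\pi_n d_n)$ is at most $n\max_k e^{nW(k/n)}\le e^{Cn}$. The maximum of $e^{nW}$ is attained at the endpoint $\ell$, where $\pi_n([\ell,\infty))\approx \pi_n(\ell)$, so the exponentials cancel and leave only a polynomial factor times $|\log\pi_n|\lesssim n$. This must still be checked to be $\lesssim\sqrt n$, which requires using the monotonicity of $W$ on $[1/2+\delta,1]$ to get geometric decay of the tail ratio $\pi_n(\ell+1)/\pi_n(\ell)$ and geometric growth of the sum. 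That check is where I expect most of the work.
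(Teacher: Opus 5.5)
Your outline is correct and uses the same three-regime split as the paper: a window $\ell-m_n\le c_2n^{3/4}$, an intermediate range up to $\ell\le 3n/4$, and the macroscopic edge (Lemmas \ref{lem9}, \ref{lem8}, \ref{lem7}). The difference is in how you treat the intermediate range. You get endpoint domination from the uniform bound $\Phi_n'(z)\ge 4\theta z+\frac{c_W}{6}z^3$, which uses only $W'''(1/2)=0$ and $W^{(4)}\ge c_W$. This gives $y^{-3}$ prefactors for both the tail and the partial sum, so the product is $\lesssim\sqrt n\,y^{-6}(\Phi_n(y)+\log y)$. The paper instead Taylor-expands $W$ at $\ell/n$, with a case split on the sign of $\theta$. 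That yields only the weaker tail bound $\frac{n^{3/4}}{\ell-m_n}e^{-nW(\ell/n)}$ of Lemma \ref{lem12}. It controls the partial sum via Lemma \ref{lem14}, which needs the extra fact $W^{(5)}\ge 0$ on $[1/2,3/4]$; the resulting $\sqrt n\,y^{-4}(\Phi_n(y)+\log y)$ still suffices. Your route is shorter and needs less information about $W$. The paper's is fully explicit at the discrete level, with Riemann-sum comparisons instead of your continuum rescaling.

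You have misplaced the difficulty, however. The edge is the easy part. For $k\ge 5n/8$, both $\pi_n(k+1)/\pi_n(k)=b_n(k)/d_n(k+1)$ and the ratio $b_n(k)/d_n(k)$ of consecutive terms of $\sum_k(\pi_n(k)d_n(k))^{-1}$ equal $e^{-W'(k/n)+O(1/n)}\le e^{-c_0}$. Hence, for $\ell\ge 3n/4$, the tail is $\lesssim\pi_n(\ell)$ and the sum is $\lesssim(\pi_n(\ell)d_n(\ell))^{-1}$; the part $k\le 5n/8$ is exponentially smaller. The product is therefore $\lesssim|\log\pi_n(n)|/d_n(\ell)\lesssim 1$, i.e. $O(1)$ rather than merely $O(\sqrt n)$. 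This is Lemmas \ref{lem10}, \ref{lem11}, \ref{lem7}.

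What actually needs care is the logarithm in the intermediate range. Bound $\Psi(\pi_n([\ell,n]))$ through the monotonicity of $\Psi$ on $[0,e^{-1}]$, applied to your upper bound; the paper fixes $c_2$ exactly for this. The one-term lower bound $\pi_n([\ell,n])\ge\pi_n(\ell)$ only gives $|\log\pi_n([\ell,n])|\le\Phi_n(y)+\frac34\log n+C$, and $\sqrt n\,y^{-6}\log n$ is not $O(\sqrt n)$ near $y=c_2$.

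Minor slips:
\begin{itemize}
\item $\Phi_n(z)$ should be $nW(1/2+zn^{-1/4})$, consistent with your Step 1.
\item $W^{(4)}$ is not bounded up to $\rho=1$. The bound $\Phi_n(z)\le C(z^2+z^4)$ holds anyway, because $W$ is bounded on $[0,1]$.
\item $\Phi_n'$ blows up at the edge, so the Laplace bound on $\int_0^y e^{\Phi_n}$ should be stated with $y^3$ in place of $\Phi_n'(y)$, or restricted to $\ell\le 3n/4$.
\end{itemize}
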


To complete the logarithmic Sobolev inequality for $\nu_U^n$, it is enough to estimate the Dirichlet forms of the birth-death process
by one of the original Glauber dynamics. This is the content of the next lemma.

\begin{lemma}\label{lem3}
There exists a constant $C>0$ independent of $n$ such that
for any density $f:\Omega_n\to\R$ with respect to $\nu_U^n$
\begin{align*}
\sum_{m=0}^{n-1}b_n(m)\left\{\sqrt{\lan f \ran_{\nu_{1/2}^{n,m+1}}} - \sqrt{\lan f \ran_{\nu_{1/2}^{n,m}}} \right\}^2 \pi_n(m)
\le CD_n^\G(f;\nu_U^n).
\end{align*}
\end{lemma}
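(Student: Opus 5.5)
The plan is a standard comparison of Dirichlet forms. First write the left-hand side through the explicit structure of $\pi_n$. Since $\nu_U^n$ restricted to $\Omega_{n,m}$ is uniform, we have $\pi_n(m)=\binom{n}{m}e^{nU_0(m/n)}/(2^nZ_U^n)$. Combined with the definition of $b_n$, this gives
\begin{align*}
\pi_n(m)b_n(m)=\frac{1}{2^nZ_U^n}\binom{n}{m}(n-m)\,e^{n(U_0(m/n)+U_0((m+1)/n))/2},
\end{align*}
which is the geometric mean of the Glauber-type weights at levels $m$ and $m+1$. On the Glauber side, for $\eta\in\Omega_{n,m}$ with $\eta(x)=0$, flipping $x$ moves $\eta$ to $\Omega_{n,m+1}$. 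The rate $c_x(\eta)$ is bounded below by $(1-\gamma)^2\ge 1/4$, uniformly in $n$, because $c_x$ factors as $(1-\gamma\sigma(x)\sigma(x-1))(1-\gamma\sigma(x)\sigma(x+1))$. So it suffices to prove the bound with $c_x$ replaced by $1$.

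Next, express the difference of averages as an average over a coupling. Count the pairs $(\eta,x)$ with $\eta\in\Omega_{n,m}$ and $\eta(x)=0$: there are $\binom{n}{m}(n-m)=\binom{n}{m+1}(m+1)$ of them, and $\eta\mapsto\eta^x$ maps the uniform measure on these pairs to the uniform measure on pairs $(\xi,x)$ with $\xi\in\Omega_{n,m+1}$ and $\xi(x)=1$. Hence
\begin{align*}
\lan f\ran_{\nu_{1/2}^{n,m}}=\frac{1}{\binom{n}{m}(n-m)}\sum_{\eta\in\Omega_{n,m}}\sum_{x:\eta(x)=0}f(\eta),\qquad
\lan f\ran_{\nu_{1/2}^{n,m+1}}=\frac{1}{\binom{n}{m}(n-m)}\sum_{\eta\in\Omega_{n,m}}\sum_{x:\eta(x)=0}f(\eta^x).
\end{align*}
The map $(u,v)\mapsto(\sqrt u-\sqrt v)^2$ is jointly convex because it is the perspective-type function $u+v-2\sqrt{uv}$. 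Jensen's inequality then gives
\begin{align*}
\Big\{\sqrt{\lan f\ran_{\nu_{1/2}^{n,m+1}}}-\sqrt{\lan f\ran_{\nu_{1/2}^{n,m}}}\Big\}^2\le\frac{1}{\binom{n}{m}(n-m)}\sum_{\eta\in\Omega_{n,m}}\sum_{x:\eta(x)=0}\big\{\sqrt{f(\eta^x)}-\sqrt{f(\eta)}\big\}^2 .
\end{align*}

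Now multiply by $\pi_n(m)b_n(m)$ and sum over $m$. The factor $\binom{n}{m}(n-m)$ cancels, which leaves
\begin{align*}
\frac{1}{2^nZ_U^n}\sum_{m}e^{n(U_0(m/n)+U_0((m+1)/n))/2}\sum_{\eta\in\Omega_{n,m}}\sum_{x:\eta(x)=0}\big\{\sqrt{f(\eta^x)}-\sqrt{f(\eta)}\big\}^2 .
\end{align*}
By contrast, $D_n^\G(f;\nu_U^n)$ contains the same sum with weights $e^{nU_0(m/n)}$ from $\eta$ plus an identical copy with weights $e^{nU_0((m+1)/n)}$ from the reversed flips, all divided by $2^nZ_U^n$. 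We have $\sqrt{AB}\le (A+B)/2$, and each unordered edge appears once from each endpoint in $D_n^\G$ with the factor $1/2$. The geometric-mean weight is therefore dominated, and the lemma follows with a constant such as $C=4$, using $c_x\ge 1/4$. No control of $U_0$ increments is needed, since the AM–GM step absorbs them.

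The main point requiring care is the Jensen/coupling step: one must check that the map $\eta\mapsto\eta^x$ is a measure-preserving bijection between the two uniform pair-sets. I expect this to be the one place needing real verification; the rest is bookkeeping of normalizations.
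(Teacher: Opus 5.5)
Your proof is correct, but it departs from the paper's in the two steps where the real work happens. Both arguments start from the same change of variables $\eta\mapsto\eta^x$ between $\Omega_{n,m}$ and $\Omega_{n,m+1}$.

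\textbf{The convexity step.} The paper writes $\lan f\ran_{\nu_{1/2}^{n,m+1}}-\lan f\ran_{\nu_{1/2}^{n,m}}$ as an average of $\nabla_x f$ and uses $(\sqrt a-\sqrt b)^2\le (a-b)^2/(a+b)$. It then factors $\nabla_x f=\nabla_x\sqrt f\,(\sqrt{f(\eta^x)}+\sqrt{f(\eta)})$ and applies Cauchy--Schwarz. This is a hands-on proof of the inequality you get in one line from the joint convexity of $(u,v)\mapsto(\sqrt u-\sqrt v)^2$ and Jensen, and it costs the paper an extra factor $2$.

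\textbf{The weights.} This is the more substantive difference. After multiplying by $b_n(m)\pi_n(m)$, the paper charges each flip with the $\nu_U^n$-weight of its lower level $m$ only. It absorbs the leftover factor $e^{n(U_0((m+1)/n)-U_0(m/n))/2}$ using $\sup_n\|U_0'\|_\infty<\infty$, which holds because $\gamma_n\to 1/2$ stays away from $1$. You instead note that $b_n(m)\pi_n(m)$ is exactly the geometric mean of the two endpoint weights. You bound it by their arithmetic mean, using both orientations of each flip in $D_n^{\G}$. Your route is therefore insensitive to the size of the increments of $U_0$: it works for any tilt depending only on the particle number, with $b_n$ defined by the same symmetric formula. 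It also gives the explicit constant $1/\min c_x$. The paper's route needs the Lipschitz bound on $U_0$, which happens to be available here.

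\textbf{A small slip.} The bound $(1-\gamma_n)^2\ge 1/4$ holds only for $\theta\ge 0$. For $\theta<0$ one has $1-\gamma_n=\frac12(1+\theta/\sqrt n)<\frac12$. You should use $c_x\ge(1-\gamma_n)^2$ instead, which is bounded below uniformly for large $n$, and adjust the constant $C=4$ accordingly.
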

\begin{proof}
For each $m=0,\ldots, n-1$, note that
\begin{align*}
\lan f \ran_{\nu_{1/2}^{n,m+1}} = \int_{\Omega_{n,m+1}} f d\nu_{1/2}^{n,m+1}
=\dfrac{1}{m+1} \sum_{x\in\T_n}\int_{\Omega_{n,m+1}}\eta_x f(\eta) \nu_{1/2}^{n,m+1}(d\eta).
\end{align*}
The second equality follows from the fact that
$\sum_{x\in\T_n}\eta_x=m+1$ on $\Omega_{n,m+1}$.
Therefore the change of variable $\Omega_{n,m+1}\ni\eta=\xi^x\in\Omega_{n,m}$ yields
\begin{align*}
\lan f \ran_{\nu_{1/2}^{n,m+1}}
&=\dfrac{1}{n-m} \sum_{x\in\T_n}\int_{\Omega_{n,m}}(1-\eta_x) f(\eta^x) \nu_{1/2}^{n,m}(d\eta)\\
&=\lan f \ran_{\nu_{1/2}^{n,m}} + \sum_{x\in\T_n}\int_{\Omega_{n,m}} \dfrac{1-\eta_x}{n-m}\nabla_xf(\eta)\nu_{1/2}^{n,m}(d\eta),
\end{align*}
where $\nabla_xf(\eta)=f(\eta^x)-f(\eta)$.
It follows from this equality and the fundamental inequality
\begin{align*}
\left(\sqrt{a} - \sqrt{b}\right)^2 = \dfrac{(a-b)^2}{\left(\sqrt{a} + \sqrt{b}\right)^2}
\le \dfrac{(a-b)^2}{a+b}
\end{align*}
that
\begin{align*}
&\left\{\sqrt{\lan f \ran_{\nu_{1/2}^{n,m+1}}} - \sqrt{\lan f \ran_{\nu_{1/2}^{n,m}}} \right\}^2\\
&\le\left(\lan f \ran_{\nu_{1/2}^{n,m+1}} +\lan f \ran_{\nu_{1/2}^{n,m}}\right)^{-1}
\left(\sum_{x\in\T_n}\int_{\Omega_{n,m}} \dfrac{1-\eta_x}{n-m}\nabla_xf(\eta)\nu_{1/2}^{n,m}(d\eta)\right)^2.
\end{align*}
By the identity $\nabla_xf(\eta) = \nabla_x\sqrt{f}(\eta)(\sqrt{f(\eta^x)} + \sqrt{f(\eta)})$ and the Schwarz inequality,
the squared term in the last expression is bounded above by
\begin{align*}
&\left(\int_{\Omega_{n,m}} \sum_{x\in\T_n}\dfrac{1-\eta_x}{n-m}\nabla_x\sqrt{f}(\eta)^2\nu_{1/2}^{n,m}(d\eta) \right)\\
&\quad\times\left(\int_{\Omega_{n,m}} \sum_{x\in\T_n}\dfrac{1-\eta_x}{n-m}(\sqrt{f(\eta^x)}+\sqrt{f(\eta)})^2\nu_{1/2}^{n,m}(d\eta)\right)\\
&\le\left(\dfrac{1}{n-m}\sum_{x\in\T_n}\int_{\Omega_{n,m}}\nabla_x\sqrt{f}(\eta)^2 \nu_{1/2}^{n,m}(d\eta)\right)
\times 2\left(\lan f \ran_{\nu_{1/2}^{n,m+1}} +\lan f \ran_{\nu_{1/2}^{n,m}}\right).
\end{align*}
Collecting the above estimates, we obtain
\begin{align*}
&\sum_{m=0}^{n-1}b_n(m)\left\{\sqrt{\lan f \ran_{\nu_{1/2}^{n,m+1}}} - \sqrt{\lan f \ran_{\nu_{1/2}^{n,m}}} \right\}^2 \pi_n(m)\\
&\le 2\sum_{m=0}^{n-1} e^{n(U_0((m+1)/n) - U_0(m/n))/2}\sum_{x\in\T_n}\left(\int_{\Omega_{n,m}}\nabla_x\sqrt{f}(\eta)^2 \nu_{1/2}^{n,m}(d\eta)\right)\pi_n(m)\\
&\le C\sum_{x\in\T_n} \int_{\Omega_n}\nabla_x\sqrt{f}(\eta)^2 \nu_U^n(d\eta) \le C D_n^\G(f;\nu_U^n),
\end{align*}
for some constant $C>0$, which may change from line to line.
In the last line, we used the fact that $U_0'$ is bounded
and that the original Glauber jump rates are strictly positive.
Thus, the proof of Lemma \ref{lem3} is completed.
\end{proof}

Now we are in the position to prove Theorem \ref{thm1}.
It is a direct consequence of the previous lemmata.

\begin{proof}[Proof of Theorem \ref{thm1}]
In this proof, the exact value of a constant $C$ may change from line to line
and use the notations introduced in this section.
Let $f:\Omega_n\to\R$ be any density with respect to $\nu_U^n$.
Lemma \ref{lem1} shows that
\begin{align*}
H_n(f|\nu_U^n)
&\le Cn^2 D_n^\ex(f;\nu_U^n) + \sum_{m=0}^n\lan f \ran_{\nu_{1/2}^{n,m}}\log \lan f \ran_{\nu_{1/2}^{n,m}} \nu_U^n(\Omega_{n,m})\\
&= Cn^2 D_n^\ex(f;\nu_U^n) + \mf H_n(\mf f_n|\pi_n),\\
\end{align*}
where $\mf f_n(m):=\lan f \ran_{\nu_{1/2}^{n,m}}, m\in I_n$.
Recall the definition of $\pi_n$ and note that $\mf f_n$
is a density with respect to $\pi_n$.
Therefore, it follows from Lemmata \ref{lem2} and \ref{lem3}
that $\mf H_n(\mf f_n|\pi_n) \le C\sqrt{n}D_n^\G(f;\nu_U^n)$,
which completes the proof of Theorem \ref{thm1}.
\end{proof}

\section{Proof of Lemma \ref{lem6}}\label{sec:pflogsobolev}

In this section, we prove Lemma \ref{lem6}.
The proof of Lemma \ref{lem6} is divided into several steps
and immediately follows from Lemmata \ref{lem7}, \ref{lem8}, and \ref{lem9}.
In these lemmata, we prove the bound for $C_n^+$ since the one for $C_n^-$ is the same.

We begin with a reminder of the stationary state $\pi_n( \cdot)$. By definition,
\begin{equation*}
\pi_n(k) = \frac{1}{Z_U^n} e^{nU_0(k/n)} \binom{n}{k} \frac{1}{2^n},    
\end{equation*}
where $Z_U^n$ is a normalizing constant. 
We use Lemma B.1 from \cite{DL} repeatedly, which states that $2^nZ_U^n / n^{1/4}$ has a nontrivial limit as $n\rightarrow\infty$.
On the other hand, by Stirling's approximation, there exist constants $C_1>0$
and $C_2>0$ such that for any $n\in\N$ and $k=1,\ldots,n-1$,
\begin{equation}
C_1a_n(k) e^{-nE(k/n)} \le \binom{n}{k}\frac{1}{2^n} \le C_2a_n(k) e^{-nE(k/n)}, \label{Stirling}
\end{equation} 
where $a_n(k) = \sqrt{\frac{n}{k(n-k)}}$.
Throughout this section, we adopt the convention that $\sqrt{n-k}$ equals $1$ for $k=0$ and $k=n$, so that the notation is simplified.
We sometimes use these facts without further mention.

As mentioned above, we focus on the proof of the lemma for $C_n^+$.
By the definition of $C_n^+$, we have to show that 
there exists a constant $C>0$ such that for any $n\in\N$ and $l \in [m_n,n]$,
\begin{equation}\label{ineq7}
\sum_{k=m_n}^l \frac{1}{\pi_n(k)d_n(k)} \Psi(\pi_n([l,n])) \leq C \sqrt{n},
\end{equation}
where $\Psi(x) = -x\log x, x\ge0$. 
For this, we split the interval $[m_n,n]$ into three subintervals as
\begin{align*}
[m_n,n]=[m_n, m_n+c_2n^{3/4}] \cup [m_n+c_2n^{3/4},(3/4)n] \cup [(3/4)n,n],
\end{align*}
for some constant $c_2>0$. The choice of $c_2$ will be specified
just before Lemma \ref{lem8}.
Lemma \ref{lem7} below shows the bound \eqref{ineq7} on the interval 
$[(3/4)n,n]$, Lemma \ref{lem8} on the interval $[m_n+c_2n^{3/4},(3/4)n]$
and Lemma \ref{lem9} for $[m_n, m_n+c_2n^{3/4}]$, respectively.
We note that the exponent $3/4$ can be replaced with an arbitrary
fixed $\kappa\in(1/2, 1)$.
In the following lemmata, we give bounds on $\pi_n([l,n])$ and
$\sum_{k=m_n}^l (\pi_n(k)d_n(k))^{-1}$, respectively.

\begin{lemma}\label{lem10}
There exists a constant $C>0$ such that
for any $n\in\N$ and $l \in [(3/4)n,n]$,
\begin{align*}
\pi_n([l,n]) \leq C n^{-1/4} \frac{e^{-nW(l/n)}}{\sqrt{n-l}}.
\end{align*}
\end{lemma}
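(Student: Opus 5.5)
We bound each term $\pi_n(k)$, $k\ge l$, using the explicit formula together with \eqref{Stirling} and Lemma B.1 of \cite{DL}, and then sum the resulting tail. For $l\le k\le n-1$,
\begin{align*}
\pi_n(k)=\frac{1}{Z_U^n}e^{nU_0(k/n)}\binom{n}{k}\frac{1}{2^n}
\le \frac{C_2}{2^nZ_U^n}\,a_n(k)\,e^{-n[E(k/n)-U_0(k/n)]}
\le C n^{-1/4}\,a_n(k)\,e^{-nW(k/n)},
\end{align*}
because $2^nZ_U^n\ge c\,n^{1/4}$ for all $n$. This holds for $n$ large by Lemma B.1 of \cite{DL}; finitely many $n$ are absorbed into the constant. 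The endpoint $k=n$ is handled directly: $\binom{n}{n}2^{-n}=2^{-n}=e^{-nE(1)}$, so the same bound holds with $a_n(n)$ replaced by $1$, which is consistent with the convention $\sqrt{n-k}=1$ at $k=n$. For $k\ge (3/4)n$ we have $a_n(k)=\sqrt{n/(k(n-k))}\le C/\sqrt{n-k}$. Hence
\begin{align*}
\pi_n([l,n])\le Cn^{-1/4}\sum_{k=l}^{n}\frac{e^{-nW(k/n)}}{\sqrt{n-k}} .
\end{align*}

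The key step is to compare $e^{-nW(k/n)}$ with $e^{-nW(l/n)}$ through a linear decay in $k-l$. By \eqref{W_properties}, $W'(1/2)=W^{(3)}(1/2)=0$, $W''(1/2)=4\theta/\sqrt n$ and $W^{(4)}\ge c_W$. Taylor's formula then gives, for $\rho\ge 3/4$,
\begin{align*}
W'(\rho)\ge \frac{4\theta}{\sqrt n}\Big(\rho-\frac12\Big)+\frac{c_W}{6}\Big(\rho-\frac12\Big)^3\ge \frac{c_W}{6\cdot 64}-\frac{|\theta|}{\sqrt n}\ge c>0
\end{align*}
for $n$ large. Therefore $W(k/n)-W(l/n)\ge c(k-l)/n$, so that
\begin{align*}
\pi_n([l,n])\le Cn^{-1/4}e^{-nW(l/n)}\sum_{j=0}^{n-l}\frac{e^{-cj}}{\sqrt{n-l-j}} .
\end{align*}
The uniformity of $c_W$ in $n$ is what makes the constant here independent of $n$.

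The main (mild) obstacle is bounding this last sum by $C/\sqrt{n-l}$, with the convention at the endpoints. I split it at $j=(n-l)/2$. For $j\le (n-l)/2$ we have $\sqrt{n-l-j}\ge\sqrt{(n-l)/2}$, so this part is at most $C/\sqrt{n-l}$ since $\sum_j e^{-cj}<\infty$. For $j>(n-l)/2$ the factor $1/\sqrt{n-l-j}$ is at most $1$ (the term $j=n-l$ uses the convention), so this part is at most $\sum_{j>(n-l)/2}e^{-cj}\le Ce^{-c(n-l)/2}\le C/\sqrt{n-l}$. The case $l=n$ is trivial. The finitely many small $n$ for which the lower bound on $W'$ fails are absorbed into $C$, since all quantities are bounded for fixed $n$. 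This yields the bound stated in Lemma \ref{lem10}.
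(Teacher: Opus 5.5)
Your proposal is correct and follows the paper's proof essentially step for step: termwise Stirling with the $n^{1/4}$ size of the normalizing constant, $a_n(k)\le C(n-k)^{-1/2}$ for $k\ge 3n/4$, a uniform bound $W'\ge c_0>0$ on $[3/4,1]$ giving $e^{-n[W(k/n)-W(l/n)]}\le e^{-c_0(k-l)}$, and splitting the resulting sum at $(n-l)/2$. You are also slightly more explicit than the paper in deriving $W'\ge c_0$ from \eqref{W_properties} and in treating the endpoint $k=n$ and small $n$. There are two harmless bookkeeping slips. First, \eqref{Stirling} already absorbs the factor $2^{-n}$, so the prefactor in your first display should be $C_2/Z_U^n$, and what is needed is $Z_U^n\ge c\,n^{1/4}$ (with $\nu_{1/2}^n$ the uniform probability measure, $Z_U^n$ itself is of order $n^{1/4}$; the paper's quotation of Lemma B.1 has the same $2^n$ mismatch). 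Second, on $[3/4,1]$ the linear Taylor term is bounded below by $-2|\theta|/\sqrt n$, not $-|\theta|/\sqrt n$.
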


\begin{proof} 
Let $l \in [(3/4)n,n]$. By the definition of $\pi_n(\cdot)$, we have
\begin{align*}    
\pi_n([l,n]) &= \frac{1}{2^n Z_U^n} \sum_{k=l}^n \binom{n}{k} e^{nU_0(k/n)}\\
&\leq C n^{-1/4} \sum_{k=l}^n  \frac{1}{\sqrt{n-k}} e^{-nW(k/n)} \\
&= Cn^{-1/4} \frac{e^{-nW(l/n)}}{\sqrt{n-l}}  \sum_{k=l}^n \frac{\sqrt{n-l}}{\sqrt{n-k}} e^{-n \{ W(k/n) - W(l/n)\}},
\end{align*}
for some constant $C$, where we used the fact that $2^nZ_U^n/n^{1/4}$ has a limit, Stirling's approximation \eqref{Stirling} and $a_n(k)$ is bounded above by $C(n-k)^{-1/2}$.
Since $l/n \geq 3/4$, we have that there exists $c_0$ such that $W'(l/n) \geq c_0$. It follows that $W(k/n) - W(l/n) \geq c_0n^{-1}(k-l)$.
Thus we get
\begin{align*}    
\pi_n([l,n]) &\leq Cn^{-1/4} \frac{e^{-nW(l/n)}}{\sqrt{n-l}} \sum_{k=l}^n \frac{\sqrt{n-l}}{\sqrt{n-k}} e^{-c_0 (k-l)} \\
&= Cn^{-1/4} \frac{e^{-nW(l/n)}}{\sqrt{n-l}} \sum_{k=0}^{n-l} \frac{\sqrt{n-l}}{\sqrt{n-k-l}} e^{-c_0 k}.
\end{align*}
It remains to show that the previous sum is bounded above by some constant,
which is independent of $n$ and $l$.
Noting that $ n-l \leq 2(n-k-l)$ for $k \leq (n-l)/2$,
the previous sum is bounded above by
\begin{align*}
\sqrt{2} \sum_{k=0}^{(n-l)/2} e^{-c_0 k} + \sum_{k=(n-l)/2}^{n-l}   \frac{\sqrt{n-l}}{\sqrt{n-k-l}} e^{-c_0 k}.
\end{align*}
We may ignore the denominator of the second sum and
change of variables shows that the second sum is bounded above by
\begin{align*}
\sqrt{n-l} e^{-c_0(n-l)/2} \sum_{k \geq 0} e^{-c_0 k}.
\end{align*}
Since $\sqrt{x} e^{-c_0x/2}$ is bounded on $x\in[0,\infty)$,
collecting the previous estimates, we obtain the desired result.
\end{proof}

Now we give a bound for $\sum_{k=m_n}^l (\pi_n(k)d_n(k))^{-1} $.

\begin{lemma}\label{lem11}
There exists a constant $C>0$ such that
for any $n\in\N$ and $l \in [(3/4)n,n]$,
\begin{align*}
\sum_{k=m_n}^l \frac{1}{\pi_n(k)d_n(k)}  \leq C n^{-3/4}\sqrt{n-l} e^{nW(l/n)}.
\end{align*}
\end{lemma}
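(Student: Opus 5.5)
The stated bound is consistent with a termwise estimate. The factor $n^{-1/4}$ coming from the normalization $2^nZ_U^n\sim n^{1/4}$ combines with the factor $1/d_n(k)\approx 1/n$ to give exactly $n^{-3/4}$. So the plan is: first bound each summand $(\pi_n(k)d_n(k))^{-1}$ pointwise, then sum a geometric-type series dominated by its endpoint $k=l$, exactly as in the proof of Lemma \ref{lem10} but in the reverse direction. Throughout, $l\in[(3/4)n,n]$, and finitely many $n$ can be absorbed into the constant.

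\textbf{Step 1 (pointwise bound).} For $k\in[m_n,n]$ I will write $\pi_n(k)=(2^nZ_U^n)^{-1}e^{nU_0(k/n)}\binom nk$. Then I use three facts: $2^nZ_U^n\le Cn^{1/4}$ by \cite[Lemma B.1]{DL}; the lower Stirling bound \eqref{Stirling}, which also holds trivially at $k=n$ with the convention $\sqrt{n-n}=1$; and $a_n(k)=\sqrt{n/(k(n-k))}\ge (n-k)^{-1/2}$. Together these give
\begin{align*}
\frac{1}{\pi_n(k)}\le C n^{1/4}\sqrt{n-k}\,e^{nW(k/n)}.
\end{align*}
Since $U_0'$ is bounded uniformly in $n$ (because $\gamma_n\le 1/2$), the exponent in $d_n(k)$ is bounded. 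Hence $d_n(k)\ge c\,k\ge c\,n/2$ for $k\ge m_n$, and therefore
\begin{align*}
\frac{1}{\pi_n(k)d_n(k)}\le C n^{-3/4}\sqrt{n-k}\,e^{nW(k/n)}.
\end{align*}
It thus remains to show that $\sum_{k=m_n}^l\sqrt{n-k}\,e^{nW(k/n)}\le C\sqrt{n-l}\,e^{nW(l/n)}$.

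\textbf{Step 2 (shape of $W$).} This step uses \eqref{W_properties} and is the main obstacle, because $W''(1/2)=4\theta/\sqrt n$ may be negative, so $W$ need not be monotone on $[1/2,1]$. From the Taylor expansion at $1/2$ and $W^{(4)}\ge c_W$, I will derive two properties for all large $n$:
\begin{enumerate}
\item There is $c_0>0$ with $W'\ge c_0$ on $[5/8,1]$. The reason is that $W'(\rho)\ge \tfrac{4\theta}{\sqrt n}(\rho-\tfrac12)+\tfrac{c_W}{6}(\rho-\tfrac12)^3$ for $\rho\ge 1/2$.
\item The maximum of $W$ on $[1/2,5/8]$ is at most $\max\{0,W(5/8)\}=W(5/8)$. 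This follows because $W(\rho)-\tfrac{2\theta}{\sqrt n}(\rho-\tfrac12)^2$ is convex-type, and $W(5/8)\ge c>0$.
\end{enumerate}
Combining the two, $W(l/n)-W(k/n)\ge W(l/n)-W(5/8)\ge c_0/8$ for all $k\le 5n/8$.

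\textbf{Step 3 (summation).} I split the sum at $5n/8$.
\begin{enumerate}
\item For $m_n\le k\le 5n/8$: there are at most $n$ terms, each at most $\sqrt n\,e^{nW(l/n)-c_0n/8}$. Since $n^{3/2}e^{-c_0n/8}$ is bounded and $\sqrt{n-l}\ge1$, this part is at most $C\sqrt{n-l}\,e^{nW(l/n)}$.
\item For $5n/8\le k\le l$: by the mean value theorem, $nW(k/n)\le nW(l/n)-c_0(l-k)$. Using $\sqrt{n-k}\le\sqrt{n-l}+\sqrt{l-k}$, the sum is at most
\begin{align*}
e^{nW(l/n)}\sum_{j\ge0}\big(\sqrt{n-l}+\sqrt j\big)e^{-c_0j}\le C\sqrt{n-l}\,e^{nW(l/n)},
\end{align*}
again using $\sqrt{n-l}\ge1$.
\end{enumerate}
Multiplying by the prefactor $Cn^{-3/4}$ from Step 1 yields the statement of Lemma \ref{lem11}.
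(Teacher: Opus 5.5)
Your proposal is correct and is essentially the paper's proof: the same Stirling-plus-normalization bound $(\pi_n(k)d_n(k))^{-1}\le Cn^{-3/4}\sqrt{n-k}\,e^{nW(k/n)}$, the same split of the sum at $5n/8$, and the same two facts about $W$, namely that its maximum on $[1/2,5/8]$ is attained at $5/8$ and that $W'\ge c_0$ on $[5/8,1]$. You also justify these two facts, which the paper only asserts (for $\theta<0$ your convexity remark gives the maximum bound only up to a harmless $O(n^{-1/2})$ error, and gives it exactly if you use the convexity of $W'$ instead), and you correctly use $d_n(k)\ge cn$ where the paper has the typo $d_n(k)\ge C/n$.
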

\begin{proof}
Let $l \in [(3/4)n,n]$.
By Stirling's approximation \eqref{Stirling}, we have to estimate
\begin{align*}
2^nZ_U^n \sum_{k=m_n}^l \frac{1}{a_n(k)d_n(k)} e^{nW(k/n)}.
\end{align*}
For this purpose, we split the interval $[m_n,l]$ into two subintervals:
$[m_n,l] = [m_n, (5/8)n] \cup [(5/8)n, l]$.

Since the function $W$ on the interval $[1/2, 5/8]$ is maximized at $5/8$,
if $n$ is large enough
the sum on the interval $[m_n,(5/8)n]$ is bounded above by
\begin{align*}
C n^{-3/4} \sum_{k=m_n}^{(5/8)n} \sqrt{n-k} e^{nW(k/n)} \leq C n^{3/4} e^{nW(5/8)} \leq C n^{-3/4} \sqrt{n-l}e^{nW(l/n)},
\end{align*}
where we used the fact that $a_n(k) \geq C/\sqrt{n-k}, d_n(k) \geq C/n$
and $l\ge(3/4)n$.
On the other hand, the sum on the interval $[(5/8)n, l]$
can be bounded as
\begin{align*}
&C n^{-3/4} \sum_{k=(5/8)n}^{l} \sqrt{n-k} e^{nW(k/n)}\\
&\quad= C n^{-3/4}\sqrt{n-l} e^{nW(l/n)} \sum_{k=(5/8)n}^{l} \frac{\sqrt{n-k}}{\sqrt{n-l}} e^{n(W(k/n)-W(l/n))},        
\end{align*}
and thus it suffices to prove that this last sum is bounded by some constant,
which is independent of $n$ and $l$. 
Since $W'$ is strictly positive on the interval $[5/8,1]$,
there exists some $c_0>0$ such that $W(l/n) - W(k/n) \geq c_0 (l-k)/n$ and thus the sum is bounded above by
\begin{align*}
\sum_{k=(5/8)n}^{l} \frac{\sqrt{n-k}}{\sqrt{n-l}} e^{-c_0(l-k)} = \sum_{k=0}^{l-(5/8)n} \frac{\sqrt{n-l+k}}{\sqrt{n-l}} e^{-c_0 k} \leq \sum_{k=0}^{l-(5/8)n} \sqrt{1+k} e^{-c_0 k},
\end{align*}
where we used the inequality $(a+k)/a \leq 1+k$ for $a \geq 1$.
This last sum is summable in $k$ and thus the lemma is proved.
\end{proof}

Combining the two previous results, we obtain the following result.

\begin{lemma}\label{lem7}
There exists a constant $C>0$ such that for any $n\in\N$ and $l \in [(3/4)n,n]$,
\begin{align*}
\sum_{k=m_n}^l \frac{1}{\pi_n(k)d_n(k)} \Psi(\pi_n([l,n])) \leq C.
\end{align*}
\end{lemma}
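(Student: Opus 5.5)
The plan is to multiply the bounds of Lemmas \ref{lem10} and \ref{lem11} and then handle the logarithm coming from $\Psi$. Fix $l\in[(3/4)n,n]$ and write $p:=\pi_n([l,n])$ and $S:=\sum_{k=m_n}^l (\pi_n(k)d_n(k))^{-1}$. Since $\Psi(p)=p\,|\log p|$, Lemmas \ref{lem10} and \ref{lem11} give
\begin{align*}
S\,p \le C n^{-3/4}\sqrt{n-l}\, e^{nW(l/n)} \cdot C n^{-1/4}\frac{e^{-nW(l/n)}}{\sqrt{n-l}} = C n^{-1}.
\end{align*}
The exponential factors $e^{\pm nW(l/n)}$ and the factors $\sqrt{n-l}$ cancel exactly, which is why the two lemmas were stated in this form. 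It therefore remains to show that $n^{-1}|\log p|$ is bounded uniformly in $l$.

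A bound on $p$ from above does not control $|\log p|$, so we need a lower bound on $p$. Since $p\ge \pi_n(n)$, it suffices to bound $\pi_n(n)$ from below. By the explicit formula,
\begin{align*}
\pi_n(n)=\frac{e^{nU_0(1)}}{2^nZ_U^n}.
\end{align*}
Here $U_0(1)=U(1/2)$ is bounded in $n$, since $\gamma_n\to 1/2$ keeps $1\pm 2\gamma\varrho$ in a compact subset of $[0,2]$ and $x\log x$ is bounded there. By Lemma B.1 of \cite{DL}, $2^nZ_U^n\le C n^{1/4}$. Hence $\pi_n(n)\ge c\,n^{-1/4}e^{-C'n}$, and so $|\log p|\le C''n$ for $n\ge 2$, since $p\le 1$ gives $\log p\le 0$.

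Combining the two steps gives $S\,\Psi(p)=S\,p\,|\log p|\le Cn^{-1}\cdot C''n\le C$, uniformly in $n$ and $l$. Small $n$ are handled by enlarging the constant, since all the quantities involved are finite.

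The step needing the most care is the lower bound on $p$: one must check that $U_0$ stays bounded uniformly in $n$, including near the endpoints where the $\log$ terms appear. This holds because $(1-2\gamma_n\cdot\tfrac12)\log(\cdot)$ is bounded by continuity of $x\log x$ at $0$. An alternative is to use $p\ge \pi_n(n)$ together with $\binom{n}{n}2^{-n}=2^{-n}$ directly, which again gives $|\log p|=O(n)$.
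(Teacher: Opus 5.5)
Your proof is correct, and its first step is the paper's: multiplying the bounds of Lemmas~\ref{lem10} and~\ref{lem11} cancels $e^{\pm nW(l/n)}$ and $\sqrt{n-l}$ and leaves $Cn^{-1}$. So everything reduces to showing that the logarithm in $\Psi$ is $O(n)$, and the two arguments differ only in how that logarithm is controlled.

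The paper pushes the upper bound $q:=Cn^{-1/4}e^{-nW(l/n)}/\sqrt{n-l}$ of Lemma~\ref{lem10} inside $\Psi$, using that $\Psi$ is increasing on $[0,e^{-1}]$. This requires $q\le e^{-1}$, which holds for large $n$ since $W(l/n)\ge W(3/4)>0$. It then reads off $|\log q|=O(n)$ from the boundedness of $W$ and the polynomial prefactors.

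You instead split $\Psi(p)=p\,|\log p|$ and treat the two factors separately: upper bounds for $p$, and the crude lower bound $p\ge\pi_n(n)$ for $|\log p|$. This avoids the monotonicity and smallness step altogether. The price is a lower bound on the tail mass, which here is essentially free.

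One correction concerns the normalization you quote. With $\pi_n$ normalized as displayed at the beginning of Section~\ref{sec:pflogsobolev}, the quantity of order $n^{1/4}$ is $Z_U^n$ itself, not $2^nZ_U^n$; the paper's wording is a slip. Since $U\ge0$ gives $Z_U^n\ge1$, the inequality $2^nZ_U^n\le Cn^{1/4}$ you quote is false as written.

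Your conclusion is unaffected. Because $U$ is even and convex, $U_0(1)=U(1/2)=\max U$, so $Z_U^n\le e^{nU_0(1)}$ and hence $\pi_n(n)\ge 2^{-n}$. This gives $|\log p|\le n\log 2$ with no appeal to the asymptotics of the partition function, which is essentially the alternative you mention at the end.
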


\begin{proof}
Let $l \in [(3/4)n,n]$.
Noting that
$$\lim_{n\rightarrow\infty} \pi_n([(3/4)n,n]) = 0,$$
by Lemma \ref{lem10} and the function $\Psi$ is increasing on $[0,e^{-1}]$,
we get
\begin{align*}
\Psi(\pi_n([l,n])) \leq \Psi \left(C n^{-1/4} \frac{e^{-nW(l/n)}}{\sqrt{n-l}}\right).
\end{align*}
Combining this with Lemma \ref{lem11}, we obtain
\begin{align*}
\sum_{k=m_n}^l \frac{1}{\pi_n(k)d_n(k)} \Psi(\pi_n([l,n]))
\leq - \frac{C}{n} \log \left( C n^{-1/4} \frac{e^{-nW(l/n)}}{\sqrt{n-l}} \right)  \leq C.
\end{align*}
In the last inequality, we used the fact that $l\ge(3/4)n$.
Thus, the proof of Lemma \ref{lem7} is completed.
\end{proof}

To obtain the bound \eqref{ineq7} for $l\in[m_n+c_2n^{3/4}, (3/4)n]$,
we proceed as in Lemma \ref{lem10}, by first estimating $\pi_n([l,n])$.

\begin{lemma}\label{lem12}
There exists a constant $K>0$ such that for large enough $c_2>0$,
for large $n\in\N$ and $l\in [m_n+c_2n^{3/4}, (3/4)n]$,
\begin{equation*}
\pi_n([l,n])  \leq K \frac{n^{3/4}}{l-m_n} e^{-n W(l/n)}.
\end{equation*}
\end{lemma}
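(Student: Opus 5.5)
We follow the scheme of Lemma \ref{lem10}. Using the explicit form of $\pi_n$, the fact that $2^nZ_U^n/n^{1/4}$ converges (\cite[Lemma B.1]{DL}), and Stirling's bound \eqref{Stirling}, we first write
\begin{align*}
\pi_n([l,n]) \le C n^{-1/4} \sum_{k=l}^{n} a_n(k)\, e^{-nW(k/n)} .
\end{align*}
We split this sum at $(3/4)n$. The part $k\ge (3/4)n$ is handled as in Lemma \ref{lem10}: there $a_n(k)\le C(n-k)^{-1/2}$ and $W'\ge c_0>0$, so this tail is at most $C n^{-1/4}e^{-nW(3n/4)}$. Since $W$ is increasing on $[1/2,1]$, this is bounded by $C n^{-1/4}e^{-nW(l/n)}$. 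Because $l-m_n\le n/4$, we have $n^{-1/4}\le 4n^{3/4}/(l-m_n)$, so the tail is dominated by the right-hand side of the claim. On the range $k\in[l,(3/4)n]$ we have $a_n(k)\le Cn^{-1/2}$, and it remains to bound
\begin{align*}
C n^{-3/4} e^{-nW(l/n)} \sum_{k\ge l} e^{-n\{W(k/n)-W(l/n)\}} .
\end{align*}

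The key step is a lower bound on the increments of $W$. We use convexity, which follows from \eqref{W_properties}: $W''(\rho)=W''(1/2)+\int_{1/2}^\rho W^{(3)}\ge \frac{4\theta}{\sqrt n}+\frac{c_W}{2}(\rho-\tfrac12)^2$, since $W^{(3)}(1/2)=0$ and $W^{(4)}\ge c_W$. Hence $W'$ is increasing, and
\begin{align*}
W(k/n)-W(l/n)\ge W'(l/n)\,\frac{k-l}{n}.
\end{align*}
Integrating $W''$ from $1/2$ gives, with $x=l/n-1/2$,
\begin{align*}
W'(l/n)\ge \frac{4\theta x}{\sqrt n}+\frac{c_W}{6}x^3 .
\end{align*}
Now $x\ge c_2 n^{-1/4}$, so $x^2\ge c_2^2 n^{-1/2}$. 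If $c_2$ is large enough that $c_W c_2^2/12\ge 4|\theta|$, the $\theta$ term is absorbed and $W'(l/n)\ge \frac{c_W}{12}x^3$. Writing $\delta:=W'(l/n)$, the geometric series gives
\begin{align*}
\sum_{k\ge l}e^{-\delta(k-l)}\le 1+\frac{1}{\delta}\le 1+\frac{C}{x^3}=1+\frac{Cn^3}{(l-m_n)^3}.
\end{align*}

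This is not yet the stated bound, and closing the gap is the main obstacle. We need $n^{-3/4}\bigl(1+n^3/(l-m_n)^3\bigr)\lesssim n^{3/4}/(l-m_n)$. The constant term is fine because $l-m_n\le n$. For the other term we need $n^{3/2}\lesssim (l-m_n)^2$, which holds if $l-m_n\ge n^{3/4}$. This is guaranteed by $l-m_n\ge c_2n^{3/4}$ as soon as $c_2\ge1$.

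So the bound follows with $K$ depending only on $c_W$, $\theta$ and the constants from Stirling and \cite[Lemma B.1]{DL}. It holds for any large $c_2$ and for $n$ large enough that the Stirling estimates and convergence statements apply uniformly. The points to check are that $a_n(k)\le Cn^{-1/2}$ uniformly for $k\le(3/4)n$, and that $c_2$ is chosen large depending only on $\theta$ and $c_W$.
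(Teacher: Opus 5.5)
Your proof is correct, and on the main range it is a different and shorter route than the paper's.

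The paper also reduces to the bound \eqref{ineq2}, but it gets there differently. It Taylor-expands $W$ to fourth order at $l/n$ and keeps only the $\theta$-part $4\theta x/\sqrt n$ of $W'(l/n)$, dropping its cubic contribution. It then relies on $W''(l/n)\gtrsim x^2$ and the quartic remainder, and cuts $[l,(5/8)n]$ into pieces where the linear, quadratic or quartic term dominates. The cases $\theta\ge 0$ and $\theta<0$ are treated separately, and each piece is compared with a Riemann integral.

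You instead keep the cubic term $\frac{c_W}{6}x^3$ in $W'(l/n)$. For $x\ge c_2 n^{-1/4}$ it absorbs the $\theta$-term whatever its sign. A single tangent-line bound plus a geometric series then gives $1+Cn^3/(l-m_n)^3$. The main term improves the paper's $Kn^{3/2}/(l-m_n)$ by the factor $n^{3/2}/(l-m_n)^2\le c_2^{-2}$, and it reflects the true width $1/W'(l/n)$ of the sum.

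Cutting at $(3/4)n$ and quoting Lemma \ref{lem10} for the tail is also cleaner. The paper's crude bound $n^{3/4}e^{-nW(5/8)}$ on $[(5/8)n,n]$ is dominated by $\frac{n^{3/4}}{l-m_n}e^{-nW(l/n)}$ only when $l/n$ stays below $5/8$ by more than order $\log n/n$. Your tail estimate holds for every $l\le (3/4)n$. The paper's multi-regime analysis buys little here beyond matching the Riemann-sum style of the later lemmas. Your $K$ is independent of $c_2$ once $c_2\ge 1$, which is what the choice of $c_2$ before Lemma \ref{lem8} requires.

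One wording correction. For $\theta<0$, the claims ``$W'$ is increasing'' and ``$W$ is increasing on $[1/2,1]$'' fail in an $O(n^{-1/4})$-neighbourhood of $1/2$. What your bound $W''(\rho)\ge 4\theta/\sqrt n+\frac{c_W}{2}(\rho-\frac12)^2$ actually gives, once $c_W c_2^2\ge 48|\theta|$, is $W''\ge 0$ and $W'>0$ on $[l/n,1]$. That is all you use, both for the tangent-line bound and for $W(l/n)\le W(3/4)$.
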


\begin{proof}
In this proof, we need to control the dependence on $c_2$,
and thus we denote by $K$ a constant which is independent of  $c_2$,
that may change from line to line but is universal for $l$ and large $n$.
\par Let $l\in [m_n+c_2n^{3/4}, (3/4)n]$.
By Stirling's approximation and the fact that $2^nZ_U^n/n^{1/4}$
has a nontrivial limit as $n\to\infty$,
we have to estimate
\begin{align*}
\frac{1}{n^{1/4}} \sum_{k=l}^n a_n(k) e^{-nW(k/n)}.
\end{align*}
We start by breaking the interval $[l,n]$ into two intervals as
$[l,n]= [l, (5/8) n] \cup [(5/8)n,n]$.
The contribution coming from the second interval is easily calculated.
Indeed, since $l\le (3/4)n$, we obtain
\begin{align*}
\frac{1}{n^{1/4}} \sum_{k=(5/8)n}^n a_n(k) e^{-n W(k/n)} \leq n^{3/4} e^{-nW(5/8)} \le \frac{n^{3/4}}{l-m_n} e^{-n W(l/n)},
\end{align*}
if $n$ is large enough.
Here, we bounded $a_n(k)$ by $1$ and used the fact that $W$ is strictly decreasing on $[1/2,1]$. 

In the rest of this proof, we give a bound for
\begin{align*}
\frac{1}{n^{1/4}} \sum_{k=l}^{(5/8)n} a_n(k) e^{-nW(k/n)}.
\end{align*}
By the fact that $a_n(k) \leq 2n^{-1/2}$,
the previous sum is bounded above by
\begin{align*}
\frac{2}{n^{3/4}} \sum_{k=l}^{(5/8)n}e^{-n W(k/n)} 
= \frac{2}{n^{3/4}} e^{-nW(l/n)} \sum_{k=l}^{(5/8)n}e^{-n [W(k/n) -W(l/n)]}.
\end{align*}
Therefore, to complete the proof, we need to show that
\begin{align}\label{ineq2}
\sum_{k=l}^{(5/8)n}e^{-n [W(k/n) -W(l/n)]} \le K \frac{n^{3/2}}{l-m_n},
\end{align}
for some constant $K>0$.

To prove \eqref{ineq2}, we give a lower bound for $W(k/n) - W(l/n)$.
We first perform the fourth-order Taylor expansion of $W$ at $l/n$,
using that $W$ has a uniformly positive fourth derivative,
that is $W^{(4)}(\rho) \geq c_W>0, \rho\in[0,1]$.
Here and hereafter, $c_W$ represents a positive constant, which is determined by $W$,
and it may change from line to line.
Then, $W(k/n) - W(l/n)$ is bounded below by
\begin{equation*}
W'(l/n) \left(\frac{k-l}{n}\right) + \frac{W''(l/n)}{2} \left(\frac{k-l}{n}\right)^2 
+ \frac{W'''(l/n)}{6}  \left(\frac{k-l}{n}\right)^3 + \frac{c_W}{24} \left(\frac{k-l}{n}\right)^4.
\end{equation*}
We next perform Taylor expansions to obtain a bound for $W^{(i)}(l/n), i=1,2,3$.
Recall \eqref{W_properties}.
The first-order Taylor expansions for $W'$ and $W'''$ at $1/2$ show that
\begin{align*}
W'(l/n) \geq \frac{4 \theta}{\sqrt{n}} \left(\frac{l}{n} - \frac{1}{2}\right),
\quad W'''(l/n) \geq c_W\left(\frac{l}{n} - \frac{1}{2}\right).
\end{align*}
Similarly, the second-order Taylor expansion for $W''$ at $1/2$ gives us
\begin{align*}
W''(l/n) \geq W''(1/2) + \frac{c_W}{2} \left(\frac{l}{n} - \frac{1}{2}\right)^2 = \frac{4\theta}{\sqrt{n}} +\frac{c_W}{2} \left(\frac{l}{n} - \frac{1}{2}\right)^2
\ge \frac{\lambda c_W}{2} \left(\frac{l}{n} - \frac{1}{2}\right)^2,
\end{align*}
where $\lambda:=1-(8|\theta|)/(c_Wc_2^2)$.
In the last inequality, we used the fact that $l/n - 1/2 \geq c_2n^{-1/4}$. We take $c_2$ large enough so that $\lambda \geq 1/2$.
Combining these estimates, $W(k/n)-W(l/n)$ is bounded below by
\begin{align}\label{Taylor_l}
\frac{4 \theta}{ n^{5/2}}(l-m_n) (k-l) + \frac{c_W}{8n^4} (l-m_n)^2 (k-l)^2 + \frac{c_W}{24n^4} (k-l)^4.
\end{align}
We discarded the cubic term because it is nonnegative
and does not play any role in the following arguments.

Note that for the first term on the right-hand side of \eqref{Taylor_l},
we don't have control over the sign (since it is a multiple of $\theta$),
while the other two terms are positive. Thus, we distinguish between the two cases,
$\theta \geq 0$ and $\theta < 0$, together with the following observation. Denote by
\begin{align*}
A_1 := \frac{1}{ n^{5/2}}(l-m_n) (k-l),
\quad A_2 := \frac{1}{n^4} (l-m_n)^2 (k-l)^2,
\quad A_3 := \frac{1}{n^4} (k-l)^4,
\end{align*}
and note that
\begin{equation}\label{Taylor_ll}
\begin{split}
A_2 \ge A_3 \iff k \leq 2l - m_n, \quad \text{and} \quad
A_1 \ge A_2 \iff k \leq l + \frac{n^{3/2}}{l-m_n}.
\end{split}
\end{equation}

We here consider the case $\theta \geq 0$. 
We split the interval $[l, (5/8)n]$ into three subintervals as
\begin{align*}
[l, (5/8)n]
&= [l, l + \frac{n^{3/2}}{l-m_n}] \cup [l + \frac{n^{3/2}}{l-m_n}, 2l- m_n]\cup [ 2l- m_n, (5/8) n] \\
&:= I_1 \cup I_2 \cup I_3.
\end{align*}
Note that the condition $l \geq m_n + c_2 n^{3/4}$ shows that $I_2$ is not empty if $n$ is large.
According to \eqref{Taylor_l} and \eqref{Taylor_ll}, we bound $ W(k/n) - W(l/n)$ below by $A_1$ on $I_1$, by $A_2$ on $I_2$ and by $A_3$ on $I_3$,
up to multiplying constants. Thus, we have
\begin{align*}
\sum_{k=l}^{(5/8)n}e^{-n [W(k/n) -W(l/n)]} \le S_1 + S_2 + S_3,
\end{align*}
where
\begin{align*}
S_1&:= \sum_{k=l}^{l+\frac{n^{3/2}}{l-m_n}} \exp\left\{- \frac{4\theta}{n^{3/2}}(l-m_n) (k-l) \right\}, \\
S_2&:= \sum_{k=l+\frac{n^{3/2}}{l-m_n}}^{2l- m_n} \exp\left\{- \frac{c_W}{8n^{3}}(l-m_n)^2(k-l)^2 \right\}, \\
S_3&:= \sum_{k=2l - m_n}^{(5/8)n} \exp\left\{- \frac{c_W}{24n^3} (k-l)^4\right\}.
\end{align*}

We start with estimating $S_1$. $S_1$ is bounded as
\begin{align*}
S_1= \sum_{k=0}^{\frac{n^{3/2}}{l-m_n}}\exp\left\{- \frac{4\theta}{n^{3/2}}(l-m_n)k\right\} 
\leq \frac{1}{1 - \exp (- 4 \theta n^{-3/2}(l-m_n))} \leq K \frac{n^{3/2}}{l-m_n}.
\end{align*}
In the last inequality, we used the inequality $(1-e^{-x})^{-1} \leq 2/x$ for enough small $x > 0$.
For $S_2$, we rewrite $S_2$ as
\begin{equation*}
\begin{split}
S_2 &=  \sum_{k=\frac{n^{3/2}}{l-m_n}}^{l- m_n} \exp\left\{-\frac{ c_W}{8n^3} (l-m_n)^2k^2 \right\}= \sum_{k=\frac{n^{3/2}}{l-m_n}}^{l- m_n} \exp{\left\{-\frac{c_W}{8} \left(\frac{l-m_n}{n^{3/2}} k\right)^2 \right\}}\\
&\leq \sum_{k = \frac{n^{3/2}}{l-m_n}}^{\infty} \exp{\left\{-\frac{c_W}{8} \left(\frac{l-m_n}{n^{3/2}} k\right)^2 \right\}} \\
&= \frac{n^{3/2}}{l-m_n} \left[ \frac{l-m_n}{n^{3/2}} \sum_{k = \frac{n^{3/2}}{l-m_n}}^{\infty} \exp{\{-\frac{c_W}{8} \left(\frac{l-m_n}{n^{3/2}} k\right)^2 \}} \right].
\end{split}
\end{equation*}
Note that the term between brackets in the last expression
is a Riemannian sum
and the function $f(x) = \exp \left\{ -\frac{c_W}{8} x^2\right\}$ is decreasing
on the interval $[0,\infty)$.
Therefore, the sum is bounded by a Riemannian integral. Thus, we obtain
\begin{equation*}
S_2\leq  \frac{n^{3/2}}{l-m_n} \int_{0}^{\infty} e^{-\frac{c_W}{8} x^2} dx
= K \frac{n^{3/2}}{l-m_n}.
\end{equation*}
Similarly,  we rewrite $S_3$ as
\begin{equation*}
\begin{split}
S_3 &= \sum_{k=l - m_n}^{(5/8)n - l} \exp\left\{- \frac{c_W}{24n} \left(\frac{k}{n^{3/4}}\right)^4\right\} \leq \sum_{k=l - m_n}^{\infty} \exp\left\{- \frac{c_W}{24n} \left(\frac{k}{n^{3/4}}\right)^4\right\} \\
&=n^{3/4} \left[\frac{1}{n^{3/4}} \sum_{k=l - m_n}^{\infty} \exp\left\{-\frac{c_W}{24} \left(\frac{k}{n^{3/4}} \right)^4 \right\} \right].
\end{split}
\end{equation*}
Note that the term between brackets in the last expression
is a Riemannian sum
and the function $f(x) = \exp\left\{ -\frac{c_W}{24} x^4\right\}$  is decreasing
on the interval $[(l-m_n-1)n^{-3/4},\infty)$.
Therefore, the sum is bounded by a Riemannian integral. Thus, we obtain
\begin{equation*}
\begin{split}
S_3 \leq n^{3/4} \int_{\frac{l-m_n-1}{n^{3/4}}}^\infty e^{-\frac{c_W}{24} x^4} dx.
\end{split}
\end{equation*}
From the fact that $\int_y^\infty e^{-\frac{c_W}{24} x^4} dx \leq Ky^{-3} e^{-\frac{c_W}{24} y^4}$ for $y>0$, we obtain
\begin{equation*}
\begin{split}
S_3 \leq K\big(\frac{n}{l-m_n-1}\big)^3 \exp \left\{-\frac{c_W}{24} \frac{(l-m_n-1)^4}{n^3}\right\}
\end{split}
\end{equation*}
Note that $n^{-3/2}(l-m_n-1)^2 \geq c_2^2-1$ and
$x^{-1} e^{-\frac{c_W}{24}x^2} \le 1$ for $x\ge1$.
Therefore, for $c_2>0$ large enough, 
the last expression is bounded above by $Kn^{3/2}(l-m_n)^{-1}$.
Collecting all the estimates for $S_1, S_2$ and $S_3$, we obtain \eqref{ineq2}.

We next consider the case $\theta < 0$ and proceed similarly to the case $\theta \geq 0$, but we need some modifications to the proof.
In this case we split the interval $[l, (5/8)n]$ into two subintervals as
\begin{align*}
\begin{split}
[l, (5/8)n] = [l, 2l-m_n] \cup [2l-m_n,(5/8)n]
: = I_1 \cup I_2.
\end{split}
\end{align*}
Again according to \eqref{Taylor_l} and \eqref{Taylor_ll},
we bound $ W(k/n) - W(l/n)$ below by $A_1+A_2$ on $I_1$, and by $A_1+A_3$ on $I_2$,
up to multiplying constants. Thus, we have
\begin{align*}
\sum_{k=l}^{(5/8)n}e^{-n [W(k/n) -W(l/n)]} \le S_1 + S_2,
\end{align*}
where
\begin{align*}
S_1&:= \sum_{k=l}^{2l- m_n} \exp\left\{-\frac{4 \theta}{ n^{3/2}}(l-m_n) (k-l) -\frac{c_W}{8n^{3}}(l-m_n)^2(k-l)^2 \right\}, \\
S_2&:= \sum_{k=2l - m_n}^{(5/8)n} \exp\left\{-\frac{4 \theta}{ n^{3/2}}(l-m_n) (k-l) -\frac{c_W}{24n^3} (k-l)^4\right\}.
\end{align*}

We start with estimating $S_1$. $S_1$ can be rewritten as
\begin{equation*}
S_1=\sum_{k=0}^{l- m_n} \exp \left\{ -4\theta \left( \frac{l-m_n}{n^{3/2}}k\right) -\frac{c_W}{8} \left(\frac{l-m_n}{n^{3/2}} k\right)^2 \right\}.
\end{equation*}
Let $f(x) = \exp \left\{ -4\theta x - \frac{c_W}{8} x^2\right\}$. Note that $f$ is
decreasing on some interval $[x_0,\infty)$.
 Denoting $k_0 = n^{3/2} x_0(l-m_n)^{-1}$, we obtain
\begin{equation*}
\begin{split}
S_1 = &\sum_{k=0}^{k_0} \exp \left\{ -4\theta \left( \frac{l-m_n}{n^{3/2}}k\right) -\frac{c_W}{8} \left(\frac{l-m_n}{n^{3/2}} k\right)^2 \right\} \\
+&\sum_{k=k_0+1}^{l- m_n} \exp \left\{ -4\theta \left( \frac{l-m_n}{n^{3/2}}k\right) -\frac{c_W}{8} \left(\frac{l-m_n}{n^{3/2}} k\right)^2 \right\}\\
\leq 
 &\sum_{k=0}^{k_0} \exp \left\{ -4\theta \left( \frac{l-m_n}{n^{3/2}}k\right) -\frac{c_W}{8} \left(\frac{l-m_n}{n^{3/2}} k\right)^2 \right\} \\
+&\sum_{k=k_0+1}^{\infty} \exp \left\{ -4\theta \left( \frac{l-m_n}{n^{3/2}}k\right) -\frac{c_W}{8} \left(\frac{l-m_n}{n^{3/2}} k\right)^2 \right\},
\end{split}
\end{equation*}
where in the inequality we just extended the range of the second sum.
The first sum is a Riemannian sum for $f$ on the interval $[0,x_0]$ with step size $(l-m_n) n^{-3/2} \leq n^{-1/2}$. Thus, for $n$ large enough, we have
\begin{equation*}
\begin{split}
\frac{l-m_n}{n^{3/2}}&\sum_{k=0}^{k_0} \exp \left\{ -4\theta \left( \frac{l-m_n}{n^{3/2}}k\right) -\frac{c_W}{8} \left(\frac{l-m_n}{n^{3/2}} k\right)^2 \right\} \\
&\leq 2\int_0^{x_0} e^{-4\theta x-\frac{c_W}{8} x^2} dx = K.
\end{split}
\end{equation*}
On the other hand, the second sum is  also a Riemannian sum
and $f(x) = \exp \left\{ -4\theta x - \frac{c_W}{8} x^2\right\}$ is decreasing on $[x_0,\infty)$.
 Thus, it can be bounded as
\begin{equation*}
\begin{split}
\frac{l-m_n}{n^{3/2}}&\sum_{k=k_0+1}^{\infty} \exp \left\{ -4\theta \left( \frac{l-m_n}{n^{3/2}}k\right) -\frac{c_W}{8} \left(\frac{l-m_n}{n^{3/2}} k\right)^2 \right\} \\
&\leq \int_{x_0}^\infty e^{-4\theta x-\frac{c_W}{8} x^2} dx = K.\\
\end{split}
\end{equation*}
Collecting the bounds for $S_1$, we obtain
\begin{equation}\label{ineq9}
    S_1 \leq K \frac{n^{3/2}}{l-m_n}. 
\end{equation}

Similarly, $S_2$ can be rewritten as
\begin{equation*}
S_2 = n^{3/4} \left[\frac{1}{n^{3/4}} \sum_{k=l - m_n}^{(5/8)n -l}  \exp\left\{-\frac{4 \theta}{ n^{3/2}}(l-m_n) k -\frac{c_W}{24} \left(\frac{k}{n^{3/4}} \right)^4\right\} \right].
\end{equation*}
Now we use the fact $l-m_n \le k \iff -4\theta (l-m_n) \leq -4\theta k$ to obtain
\begin{equation*}
\begin{split}
S_2\leq n^{3/4} \left[\frac{1}{n^{3/4}} \sum_{k=l - m_n}^{(5/8)n -l} \exp\left\{- 4\theta \big(\frac{k}{ n^{3/4}} \big)^2 -\frac{c_W}{24} \left(\frac{k}{n^{3/4}} \right)^4\right\} \right] \\
\leq n^{3/4} \left[\frac{1}{n^{3/4}} \sum_{k=l - m_n}^{\infty} \exp\left\{- 4\theta \big(\frac{k}{ n^{3/4}} \big)^2 -\frac{c_W}{24} \left(\frac{k}{n^{3/4}} \right)^4\right\} \right].
\end{split}
\end{equation*}
Let $f(x) = \exp \left\{ -4\theta x^2 - \frac{c_W}{24} x^4\right\}$ and note that $f$ is decreasing for $x\geq x_0$, for some $x_0$.
The above sum is a Riemannian sum for $f$ on the interval $[(l-m_n)n^{-3/4},\infty)$.
Choosing $c_2 \geq x_0$, the sum is bounded by the Riemannian integral and we obtain
\begin{align*}
S_2\leq n^{3/4} \int_{\frac{l-m_n}{n^{3/4}}}^\infty e^{-4\theta x^2-\frac{c_W}{24} x^4} dx
&\leq K n^{3/4}\frac{n^{9/4}}{(l-m_n)^3} \exp \left\{-\frac{c_W}{24} \frac{(l-m_n)^4}{n^3}\right\} \\
&= K \left(\frac{n}{l-m_n}\right)^3 \exp \left\{-\frac{c_W}{24} \frac{(l-m_n)^4}{n^3}\right\},
\end{align*}
where we used the elementary inequality $\int_y^\infty e^{-ax^2-\lambda x^4} dx \leq K y^{-3} e^{-\lambda y^4}$ for $\lambda >0, a\in \R$. The last expression is again bounded by $Kn^{3/2}(l-m_n)^{-1}$.
Collecting all the estimates for $S_1$ and $S_2$, we obtain \eqref{ineq2},
which completes the proof of Lemma \ref{lem12}.
\end{proof}

We next estimate the sum $\sum_{k=m_n}^l (\pi_n(k) d_n(k))^{-1}$.
For this purpose, we start from an elementary lemma.

\begin{lemma}\label{lem14}
Let $f:\R\to\R$ be a smooth function satisfying $f(0) = f'(0) = f'''(0) = 0$ and
denote by $b := f^{(4)}(0)/24$. Suppose that $f^{(5)}(x) \geq 0$ for any $x \geq 0$.
Then the function $g(x) = x^{-2}f(x) - bx^2$ is increasing for $x \geq 0$
\end{lemma}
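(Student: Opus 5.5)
We want $g(x) = x^{-2}f(x) - bx^2$ to be increasing on $x>0$; at $x=0$ we take the continuous extension. The plan is to compute $g'$ and show it is nonnegative by Taylor's formula with integral remainder.

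\textbf{Step 1: rewrite $g$ using Taylor's formula.} Since $f(0)=f'(0)=f'''(0)=0$, Taylor's formula at $0$ with integral remainder of order five gives
\begin{align*}
f(x) = \frac{f''(0)}{2}x^2 + b x^4 + \int_0^x \frac{(x-s)^4}{24} f^{(5)}(s)\, ds .
\end{align*}
Hence
\begin{align*}
g(x) = \frac{f''(0)}{2} + x^{-2}\int_0^x \frac{(x-s)^4}{24} f^{(5)}(s)\, ds .
\end{align*}
This formula shows that $g$ extends continuously to $x=0$ with value $f''(0)/2$.

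\textbf{Step 2: show the remainder term is increasing.} Substitute $s = xu$ with $u\in[0,1]$:
\begin{align*}
x^{-2}\int_0^x \frac{(x-s)^4}{24} f^{(5)}(s)\, ds = \frac{x^3}{24}\int_0^1 (1-u)^4 f^{(5)}(xu)\, du .
\end{align*}
The integrand is nonnegative. However, showing this expression is increasing in $x$ would require $f^{(5)}(xu)$ to be nondecreasing in $x$, which is not assumed. Instead I will differentiate $h(x) := \int_0^x (x-s)^4 f^{(5)}(s)\, ds$ directly:
\begin{align*}
h'(x) = 4\int_0^x (x-s)^3 f^{(5)}(s)\, ds .
\end{align*}
Then
\begin{align*}
\frac{d}{dx}\big(x^{-2}h(x)\big) = x^{-3}\big(x h'(x) - 2h(x)\big) = x^{-3}\int_0^x (x-s)^3\,\big(4x - 2(x-s)\big) f^{(5)}(s)\, ds .
\end{align*}
For $0\le s\le x$ we have $4x - 2(x-s) = 2x + 2s \ge 0$. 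Since $f^{(5)}\ge 0$ on $[0,\infty)$, the integrand is nonnegative, and so $g'(x)\ge 0$ for $x>0$.

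Together with continuity of $g$ at $0$, this shows $g$ is increasing on $[0,\infty)$.

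The only point needing care is Step 2: the naive approach via the rescaled integral fails without monotonicity of $f^{(5)}$, and differentiating $h$ directly avoids this. The sign check $2x+2s\ge 0$ is elementary.
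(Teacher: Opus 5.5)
Your proof is correct and is essentially the paper's argument: both reduce the claim to nonnegativity of the numerator $x f'(x) - 2f(x) - 2bx^4$ of $x^3 g'(x)$ via Taylor's formula. The paper checks that this numerator vanishes to third order at $0$ and has fourth derivative $2\big(f^{(4)}(x)-f^{(4)}(0)\big)+xf^{(5)}(x)\ge 0$, while you expand $f$ with integral remainder first and get the equivalent explicit form $\frac{1}{24}\int_0^x (x-s)^3(2x+2s)f^{(5)}(s)\,ds$ for the same quantity.
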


\begin{proof}
Note that
$$g'(x) = \frac{f'(x)x - 2f(x) -2bx^4}{x^3},$$
and thus we need to show that
$$h(x) := f'(x)x - 2f(x)-2bx^4 \ge 0,$$ 
for any  $x\geq 0$. A simple calculation shows that
the first three derivatives of $h$ at $0$ vanish and $h^{(4)}(x) \geq 0$ for $x \geq 0$.
 The conclusion follows from the fourth-order Taylor expansion for $h$ at $0$.
\end{proof}

We are now in position to estimate $\sum_{k=m_n}^l (\pi_n(k) d_n(k))^{-1}$.

\begin{lemma}\label{lem13}
There exists a constant $C>0$ such that
for any $n\in\N$ and $l\in [m_n+c_2n^{3/4}, (3/4)n]$,
\begin{align}\label{ineq6}
\sum_{k=m_n}^l \frac{1}{\pi_n(k) d_n(k)} \leq  C \frac{n^{11/4}}{(l-m_n)^3}e^{nW(l/n)}.
\end{align}
\end{lemma}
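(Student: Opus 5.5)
As in Lemma \ref{lem11}, I start from Stirling's approximation \eqref{Stirling} and the fact that $2^nZ_U^n/n^{1/4}$ converges. Together these reduce the left-hand side of \eqref{ineq6} to
\begin{align*}
C n^{1/4}\sum_{k=m_n}^l \frac{1}{a_n(k)d_n(k)}e^{nW(k/n)} .
\end{align*}
For $k\in[m_n,(3/4)n]$ we have $a_n(k)\ge c n^{-1/2}$, and $d_n(k)\ge c n$ because $U_0'$ is bounded and $k\ge n/2$. So it suffices to prove
\begin{align*}
\sum_{k=m_n}^l e^{-n[W(l/n)-W(k/n)]}\le C\frac{n^{3}}{(l-m_n)^3}.
\end{align*}
The bound is of the form "reciprocal of the drift at $l$". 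The expected gain is of order $W'(l/n)\approx (l-m_n)^3/n^3$, which dominates $\theta (l-m_n)/n^{3/2}$ once $l-m_n\ge c_2n^{3/4}$ with $c_2$ large.

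The key step is a lower bound on $W(l/n)-W(k/n)$ for $m_n\le k\le l$, and this is where I would use Lemma \ref{lem14}. Set $f(x):=W(1/2+x)-\frac{2\theta}{\sqrt n}x^2$. Then $f(0)=f'(0)=f'''(0)=0$ by \eqref{W_properties}, and $b=f^{(4)}(0)/24\ge c_W/24$. I also need $f^{(5)}\ge0$ on $x\ge0$. I would check this directly from the explicit formula, since $E^{(5)}(\rho)$ and $-U_0^{(5)}(\rho)$ are both nonnegative for $\rho\ge 1/2$. (If needed, one can handle it through the odd symmetry of $W'$ around $1/2$.) Lemma \ref{lem14} then says $g(x)=x^{-2}f(x)-bx^2$ is increasing. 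Writing $x=(k-m_n)/n$ and $y=(l-m_n)/n$ with $x\le y$, this gives
\begin{align*}
f(y)-f(x)\ge (y^2-x^2)\,g(x)+b(y^4-x^4)\ge b(y^4-x^4),
\end{align*}
because $g\ge g(0)=0$ (indeed $g(x)\to f''(0)/2=0$ as $x\to0$). Adding back the quadratic part,
\begin{align*}
W(l/n)-W(k/n)\ge b(y^4-x^4)+\frac{2\theta}{\sqrt n}(y^2-x^2).
\end{align*}
Since $y^4-x^4\ge y^2(y^2-x^2)$, I get $W(l/n)-W(k/n)\ge (y^2-x^2)\,(by^2+2\theta n^{-1/2})$. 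With $y\ge c_2n^{-1/4}$ and $c_2$ large (depending on $|\theta|$ and $c_W$), this is at least $\tfrac b2 y^2(y^2-x^2)\ge \tfrac b2 y^3(y-x)$, where the last step uses $y+x\ge y$.

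Then
\begin{align*}
\sum_{k=m_n}^l e^{-n[W(l/n)-W(k/n)]}\le \sum_{j\ge0}\exp\Big\{-\tfrac b2\,\frac{(l-m_n)^3}{n^3}\, j\Big\}\le \frac{C}{1-e^{-\frac b2 (l-m_n)^3/n^3}} .
\end{align*}
Because $(l-m_n)^3/n^3\le 1$, the inequality $(1-e^{-x})^{-1}\le C/x$ for $x\in(0,1]$ gives the bound $Cn^3/(l-m_n)^3$. Collecting constants yields \eqref{ineq6}.

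I expect the main obstacle to be verifying the hypothesis $f^{(5)}\ge 0$ on $[0,1/2]$ needed for Lemma \ref{lem14}, uniformly in $n$ since $\gamma_n$ varies with $n$. If that fails, I would instead do a Taylor argument as in Lemma \ref{lem12}: expand $W(k/n)$ around $l/n$ and use the lower bounds $W'(l/n)\ge c y^3$ (for $c_2$ large) and $W''\ge -C|\theta|n^{-1/2}$. The $\theta$ term can then be absorbed into the dominant $y^3$ term, giving the same geometric sum.
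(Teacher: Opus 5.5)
Your reduction to $\sum_{k=m_n}^l e^{-n[W(l/n)-W(k/n)]}\le Cn^3/(l-m_n)^3$ is correct, and so is the argument after it, except for one justification discussed below. Your route differs from the paper's in how Lemma \ref{lem14} is used.

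The paper applies Lemma \ref{lem14} to $W(1/2+\cdot)$ itself and adds a sixth-order Taylor expansion at $1/2$. This gives the pointwise bound $W(k/n)\le \frac{2\theta}{\sqrt n}x^2+bx^4+R_l y^6$. The sum is then compared with $n^{3/4}\int_0^{Y}e^{2\theta u^2+bu^4}\,du\le Cn^{3/4}Y^{-3}e^{2\theta Y^2+bY^4}$.

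You remove the quadratic part first, so that $g\ge 0$. This yields the increment bound $W(l/n)-W(k/n)\ge\frac b2 y^3(y-x)$ and reduces everything to a geometric series whose ratio is governed by the drift $(l-m_n)^3/n^3$. This is shorter and avoids both the sixth-order remainder and the Laplace-type integral estimate. The price is the extra condition $c_2^2\ge 4|\theta|/b$, whereas the paper's argument works for any fixed $c_2>0$. Since $c_2$ is taken large anyway (Lemma \ref{lem12} and the choice made before Lemma \ref{lem8}), this is harmless, but you should state it explicitly.

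The step that is wrong as written is your verification of $f^{(5)}\ge 0$: $-U_0^{(5)}(\rho)$ is \emph{not} nonnegative for $\rho\ge 1/2$. Set $\psi(t)=(1+t)\log(1+t)+(1-t)\log(1-t)=\sum_{j\ge1}\frac{t^{2j}}{j(2j-1)}$. Then $U(\varrho)=\gamma^{-1}\psi(2\gamma\varrho)$ is a power series with nonnegative coefficients, so $U_0^{(5)}\ge 0$ on $[1/2,1)$. Hence $W^{(5)}=E^{(5)}-U_0^{(5)}$ is a difference of two nonnegative terms. Symmetry only gives $W^{(5)}(1/2)=0$, not the sign.

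The needed fact is nevertheless true. The cleanest way to see it is the expansion
\[
W(1/2+s)=\sum_{j\ge1}\frac{(1-2\gamma^{2j-1})\,(2s)^{2j}}{2j(2j-1)},\qquad |s|<\tfrac12 .
\]
Its $j=1$ term is exactly $\frac{2\theta}{\sqrt n}s^2$. For $j\ge 2$ the coefficients are positive once $\gamma<2^{-1/3}$, which holds for $\gamma_n$ when $n$ is large. So your $f$ is an even power series with nonnegative coefficients. This gives $f^{(5)}\ge 0$ on $[0,1/2)$, and in fact $f(y)-f(x)\ge b(y^4-x^4)$ for $0\le x\le y$ directly, so Lemma \ref{lem14} is not even needed. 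The same expansion also gives the paper's assertion $W^{(6)}\ge 0$.

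Your fallback would not rescue the argument as described. In $W(l/n)-W(k/n)=W'(l/n)(y-x)-\frac12 W''(\xi)(y-x)^2$, a lower bound on $W''$ bounds the increment from above, not below. Moreover, when $y-x$ is comparable to $y$, the quadratic term (about $6by^2(y-x)^2$) can exceed the linear one (about $4by^3(y-x)$). So a crude Taylor bound at $l/n$ does not produce the $y^3(y-x)$ gain.
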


\begin{proof}
Let $l\in [m_n+c_2n^{3/4}, (3/4)n]$.
By Stirling's approximation \eqref{Stirling}, we have to estimate
\begin{equation*}
2^nZ_U^n \sum_{k=m_n}^l \frac{1}{a_n(k)d_n(k)} e^{nW(k/n)}.
\end{equation*}
Using the fact that $a_n(k) \geq C n^{-1/2}$, $d_n(k) \geq C n$ and $2^n Z_U^n \leq C n^{1/4}$, we obtain
\begin{align}\label{ineq4}
2^nZ_U^n \sum_{k=m_n}^l \frac{1}{a_n(k)d_n(k)} e^{nW(k/n)} \leq C n^{-1/4} \sum_{k=m_n}^l e^{nW(k/n)}.
\end{align}

Note that $W(1/2) = W'(1/2) = W'''(1/2) = 0$.
Moreover, since $W^{(5)}(1/2) = 0$ and $W^{(6)}(\cdot+1/2) \ge 0$, $W^{(5)}(x) \geq 0$ for any $x \in [1/2, 3/4]$.
Therefore, we apply Lemma \ref{lem14} to the function $W$.
Then for any $l\in [m_n+c_2n^{3/4}, (3/4)n]$ and $k\in[m_n, l]$, we obtain
\begin{equation*}
\frac{W(k/n)}{(\frac{k}{n} -\frac{m_n}{n})^2} -b(\frac{k}{n} -\frac{m_n}{n})^2 \leq \frac{W(l/n)}{(\frac{l}{n} -\frac{m_n}{n})^2} - b(\frac{l}{n} -\frac{m_n}{n})^2,
\end{equation*}
where $b:=W^{(4)}(1/2)/24$.
On the other hand, by the sixth-order Taylor expansion for $W$ at $m_n/n$, there exists some $R_l\in\R$ such that
\begin{equation*}
W(l/n) =  \frac{2\theta}{\sqrt{n}} (\frac{l}{n} -\frac{m_n}{n})^2 + b (\frac{l}{n} -\frac{m_n}{n})^4 + R_l (\frac{l}{n} -\frac{m_n}{n})^6.
\end{equation*}
Combining these two estimates, we obtain
\begin{align}
W(k/n) &\leq \frac{2\theta}{\sqrt{n}} (\frac{k}{n} -\frac{m_n}{n})^2 + b(\frac{k}{n} -\frac{m_n}{n})^4 + R_l (\frac{l-m_n}{n})^4(\frac{k-m_n}{n})^2 \notag\\
&\leq \frac{2\theta}{\sqrt{n}} (\frac{k}{n} -\frac{m_n}{n})^2 + b(\frac{k}{n} -\frac{m_n}{n})^4 + R_l (\frac{l-m_n}{n})^6. \label{ineq3}
\end{align}

It follows from \eqref{ineq3} that
\begin{align*}
\sum_{k=m_n}^l e^{nW(k/n)} 
&\leq \exp\left\{nR_l(\frac{l-m_n}{n})^6\right\}\sum_{k=m_n}^l \exp \left\{ 2\theta \sqrt{n} (\frac{k}{n} -\frac{m_n}{n})^2  + nb(\frac{k}{n} -\frac{m_n}{n})^4  \right\}  \\
&= \exp\left\{nR_l\frac{l-m_n}{n}^6\right\} n^{3/4} \left[ \dfrac{1}{n^{3/4}}\sum_{k=0}^{l-m_n} \exp\left\{2\theta (\frac{k}{n^{3/4}})^2  + b (\frac{k}{n^{3/4}})^4\right\} \right].
\end{align*}
The last expression can be bounded as we did for the bound \eqref{ineq9}.
Indeed, let $f(x) = \exp \left\{ 2\theta x^2 + b x^4\right\}$ and note that
$f$ is increasing for $x \geq x_0$, for some $x_0$.
Let $k_0 = n^{3/4}x_0$ and write
\begin{equation*}
\begin{split}
&\sum_{k=0}^{l-m_n} \exp\left\{2\theta (\frac{k}{n^{3/4}})^2  + b (\frac{k}{n^{3/4}})^4\right\} \\
= &\sum_{k=0}^{k_0} \exp\left\{2\theta (\frac{k}{n^{3/4}})^2  + b (\frac{k}{n^{3/4}})^4\right\} + \sum_{k=k_0+1}^{l-m_n} \exp\left\{2\theta (\frac{k}{n^{3/4}})^2  + b (\frac{k}{n^{3/4}})^4\right\}.
\end{split}
\end{equation*}
The first sum is a Riemannian sum for $f$ on the interval $[0,x_0]$
and in this way is bounded by the Riemannian integral multiplied by some constant.
On the other hand, the second sum is also a Riemannian sum
for the increasing function $f$ on the interval $[x_0+n^{-3/4}, (l-m_n +1)n^{-3/4}]$.
Thus, it is bounded by the Riemannian integral and we obtain
\begin{equation*}
\begin{split}
\sum_{k=0}^{l-m_n} \exp\left\{2\theta (\frac{k}{n^{3/4}})^2  + b (\frac{k}{n^{3/4}})^4\right\} &
\leq C\int_0^{x_0} e^{2\theta x^2 + bx^4} dx + \int_{x_0}^{\frac{l-m_n+1}{n^{3/4}}} e^{2\theta x^2 + bx^4} dx \\
&\leq C\int_{0}^{\frac{l-m_n+1}{n^{3/4}}} e^{2\theta x^2 + bx^4} dx.
\end{split}
\end{equation*}
Combining these estimates gives us
\begin{align}\label{ineq5}
\sum_{k=m_n}^l e^{nW(k/n)}  \leq C\exp\left\{nR_l(\frac{l-m_n}{n})^6\right\}  n^{3/4} \int_{0}^{\frac{l-m_n+1}{n^{3/4}}} e^{2\theta x^2 + bx^4} dx.
\end{align}
Therefore, by \eqref{ineq4}, \eqref{ineq5} and the elementary inequality
$$
\int_0^y e^{2\theta  x^2 + bx^4} dx \leq C y^{-3} e^{2\theta y^2 + b y^4}, \quad y\ge1,
$$
we obtain \eqref{ineq6}.
\end{proof}

Now we fix $c_2>0$ large enough so that $K c_2^{-1} < e^{-1}$ is satisfied,
where $K$ is given by \ref{lem7}.
Combining Lemma \ref{lem12} and Lemma \ref{lem13}, we obtain the following result.

\begin{lemma}\label{lem8}
There exists a constant $C>0$ such that
for any $n\in\N$ and $l\in [m_n+c_2n^{3/4}, (3/4)n]$,
\begin{equation*}
\sum_{k=m_n}^l \frac{1}{\pi_n(k)d_n(k)} \Psi(\pi_n([l,n])) \leq C \sqrt{n}.
\end{equation*}
\end{lemma}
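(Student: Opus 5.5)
The plan is to combine Lemma \ref{lem12} and Lemma \ref{lem13} through the monotonicity of $\Psi$ on $[0,e^{-1}]$, in the same way Lemma \ref{lem7} combined Lemmas \ref{lem10} and \ref{lem11}. Fix $l\in[m_n+c_2n^{3/4},(3/4)n]$ and write
\[
x_l:=K\frac{n^{3/4}}{l-m_n}e^{-nW(l/n)} .
\]
Lemma \ref{lem12} gives $\pi_n([l,n])\le x_l$.

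\textbf{Step 1: bound on $\Psi$.} Since $l-m_n\ge c_2n^{3/4}$ and $W(l/n)\ge0$ on $[1/2,1]$, we have $x_l\le Kc_2^{-1}<e^{-1}$ by the choice of $c_2$. As $\Psi$ is increasing on $[0,e^{-1}]$, it follows that
\[
\Psi(\pi_n([l,n]))\le \Psi(x_l)= x_l\Big(nW(l/n)+\log\frac{l-m_n}{Kn^{3/4}}\Big).
\]

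\textbf{Step 2: multiply by Lemma \ref{lem13}.} The exponentials $e^{\pm nW(l/n)}$ cancel. Putting $s:=(l-m_n)/n^{3/4}\ge c_2$, the product is bounded by
\[
C\frac{n^{7/2}}{(l-m_n)^4}\Big(nW(l/n)+\log(s/K)\Big)=\frac{C}{s^4}\Big(nW(l/n)+\log(s/K)\Big)\cdot\frac{n^{7/2}}{n^{3}}.
\]
The prefactor is therefore $\sqrt n\,s^{-4}$. The logarithmic term contributes at most $C\sqrt n\,s^{-4}\log s\le C\sqrt n$.

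\textbf{Step 3: the $nW(l/n)$ term (main obstacle).} Here we need $nW(l/n)\le C s^4$ for $l/n\le 3/4$. Since $W$ is smooth on $[1/2,3/4]$, Taylor expansion at $1/2$ together with \eqref{W_properties} gives
\[
W(\rho)\le \frac{2\theta}{\sqrt n}(\rho-\tfrac12)^2 + C(\rho-\tfrac12)^4 .
\]
Hence
\[
nW(l/n)\le 2\theta\sqrt n\,(l-m_n)^2n^{-2}+Cs^4=2\theta s^2+Cs^4\le C s^4,
\]
using $s\ge c_2\ge1$. Some care is needed that the fourth-derivative bound is uniform in $n$; this holds because $W$ depends on $n$ only through $\gamma_n$, which stays in a compact set. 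With this, the term is at most $C\sqrt n$.

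Summing the two contributions gives the bound $C\sqrt n$ uniformly in $l$ and in $n$ large. For small $n$ the bound holds trivially by enlarging $C$.
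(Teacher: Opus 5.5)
Your proof is correct and follows the paper's argument step by step: Lemma~\ref{lem12} together with the monotonicity of $\Psi$ on $[0,e^{-1}]$, then multiplication by the bound of Lemma~\ref{lem13} so that $e^{\pm nW(l/n)}$ cancel, and finally the quartic upper bound $nW(l/n)\le 2\theta s^2+Cs^4$ with $s=(l-m_n)n^{-3/4}\ge c_2$. One small correction: the claim $W\ge 0$ on $[1/2,1]$ is false when $\theta<0$, since $W''(1/2)=4\theta/\sqrt n<0$; on the relevant range $l-m_n\ge c_2 n^{3/4}$ it does follow from $W(\rho)\ge \frac{2\theta}{\sqrt n}(\rho-\frac12)^2+\frac{c_W}{24}(\rho-\frac12)^4$ once $c_2^2\ge 48|\theta|/c_W$, and the paper leaves this same point implicit when it writes $\pi_n([l,n])\le Kc_2^{-1}$.
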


\begin{proof}
Let $l\in [m_n+c_2n^{3/4}, (3/4)n]$.
By Lemma \ref{lem12},
\begin{equation*}
\pi_n([l,n]) \leq K \frac{n^{3/4}}{l-m_n} e^{-n W(l/n)} \leq K c_2^{-1}
\end{equation*}
and it follows from the choice of $c_2$ that $\pi_n([l,n]) < e^{-1}$.
Since the function $\Psi$ is increasing on $[0, e^{-1}]$, we get
\begin{equation*}
\Psi(\pi_n([l,n])) \leq 
 C \frac{n^{3/4}}{l-m_n} e^{-n W(l/n)} \log\left( \frac{l-m_n}{Cn^{3/4}} e^{n W(l/n)}\right).
\end{equation*}
By \eqref{lem13}, we obtain
\begin{align*}
\sum_{k=m_n}^l \frac{1}{\pi_n(k)d_n(k)} \Psi(\pi_n([l,n])) \leq C \dfrac{n^{7/2}}{(l-m_n)^4}\log\left( \frac{l-m_n}{Cn^{3/4}} e^{n W(l/n)}\right).
\end{align*}
Note that there exists a constant $C>0$ such that for any $n\in\N$ and $l\in[m_n+c_2n^{3/4}, (3/4)n]$,
$$
W(l/n) \leq  \frac{2\theta}{\sqrt{n}} (\frac{l}{n} -\frac{m_n}{n})^2 + C (\frac{l}{n} -\frac{m_n}{n})^4.
$$
Thus, letting $x = n^{-3/4}(l-m_n)$ and noting that $x \geq c_2$,
the right-hand side of the penultimate display is bounded above by
\begin{equation*}
C n^{1/2} \sup_{x\ge c_2}\left\{x^{-4} \left( \dfrac{\log x}{C} + 2\theta x^2+Cx^4 \right) \right\} = C n^{1/2},
\end{equation*}
which completes the proof of Lemma \ref{lem8}.
\end{proof}

We now focus on the interval $[m_n, m_n+ c_2n^{3/4}]$.
Although the proofs of the following lemmata are similar to the previous ones,
we give them for the reader's convenience.

\begin{lemma}\label{lem15}
There exists a constant $C>0$ such that
for any $n\in\N$ and $l\in [m_n, m_n+c_2n^{3/4}]$,
\begin{equation*}
\pi_n([l,n])  \leq C.
\end{equation*}
\end{lemma}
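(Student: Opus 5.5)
The bound is immediate from the definition of $\pi_n$ and needs none of the Stirling or Taylor estimates used in Lemmata \ref{lem10} and \ref{lem12}. Recall that $\pi_n(m)=\nu_U^n(\Omega_{n,m})$. Since $\nu_U^n$ is a probability measure on $\Omega_n$ and the sets $\Omega_{n,m}$, $m\in I_n$, partition $\Omega_n$, the function $\pi_n$ is a probability measure on $I_n=\{0,\ldots,n\}$. Therefore, for every $n\in\N$ and every $l\in[m_n,m_n+c_2n^{3/4}]$ (indeed for every $l\in I_n$),
\begin{align*}
\pi_n([l,n]) \,=\, \sum_{k=l}^n \nu_U^n(\Omega_{n,k}) \,=\, \nu_U^n\Big(\bigcup_{k=l}^n\Omega_{n,k}\Big) \,\le\, 1 .
\end{align*}
So the lemma holds with $C=1$.

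There is no genuine obstacle in this step. What matters is how the bound is used afterwards. On this window the weight $\sum_{k=m_n}^l(\pi_n(k)d_n(k))^{-1}$ will have to be of order $\sqrt n$, and $\Psi$ is bounded on $[0,1]$ by $e^{-1}$. Hence the trivial bound on $\pi_n([l,n])$ already gives $\Psi(\pi_n([l,n]))\le e^{-1}$, and no monotonicity argument for $\Psi$ is needed here, unlike in Lemmata \ref{lem7} and \ref{lem8}.

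The real work on the interval $[m_n,m_n+c_2n^{3/4}]$ is therefore the companion estimate on $\sum_{k=m_n}^l(\pi_n(k)d_n(k))^{-1}$. I would prove it as in \eqref{ineq4}: use $2^nZ_U^n\le Cn^{1/4}$, $a_n(k)\ge Cn^{-1/2}$ and $d_n(k)\ge Cn$ to reduce it to $Cn^{-1/4}\sum_{k=m_n}^{l}e^{nW(k/n)}$. Then bound $nW(k/n)$ uniformly on this window via \eqref{W_properties}, using $|k-m_n|\le c_2n^{3/4}$, to get at most $Cn^{-1/4}\cdot c_2n^{3/4}=C\sqrt n$. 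Together with $\Psi\le e^{-1}$, this yields the bound \eqref{ineq7} on the remaining interval.
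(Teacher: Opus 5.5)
Your argument is correct, and it takes a genuinely different and much shorter route than the paper.

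The paper proves Lemma~\ref{lem15} with the same machinery as Lemmata~\ref{lem10} and \ref{lem12}:
it uses Stirling's approximation \eqref{Stirling} and the asymptotics of $Z_U^n$ recalled at the beginning of Section~\ref{sec:pflogsobolev} to reduce $\pi_n([l,n])$ to $n^{-1/4}\sum_{k\ge l}a_n(k)e^{-nW(k/n)}$;
it disposes of the range $k\ge (3/4)n$ using $e^{-nW(3/4)}$;
on the remaining range it uses the Taylor lower bound $W(k/n)\ge 2\theta n^{-1/2}((k-m_n)/n)^2+(c_W/24)((k-m_n)/n)^4$ to compare the sum with the integral $\int_0^\infty e^{-2\theta x^2-c_Wx^4/24}\,dx$.

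You simply note that $\pi_n(\cdot)=\nu_U^n(\Omega_{n,\cdot})$ is a probability measure on $I_n$. This gives the bound with $C=1$ for every $l\in I_n$, without Stirling, the asymptotics of $Z_U^n$, or any expansion of $W$. Nothing is lost: the paper's computation yields only an unspecified constant, and the lemma's sole use, in Lemma~\ref{lem9}, needs no more than $\Psi(\pi_n([l,n]))\le\sup_{[0,1]}\Psi=e^{-1}$.

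The paper's route buys only uniformity of presentation with Lemmata~\ref{lem10} and \ref{lem12}. There, genuine tail decay of order $e^{-nW(l/n)}$ is indispensable to offset the factor $e^{nW(l/n)}$ in the bound on $\sum_{k=m_n}^l(\pi_n(k)d_n(k))^{-1}$. On the window $[m_n,m_n+c_2n^{3/4}]$ that sum is already $O(\sqrt n)$ by Lemma~\ref{lem16}, so no decay is needed.

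Your sketch of that companion estimate is also correct. Bounding $nW(k/n)\le 2|\theta|c_2^2+Cc_2^4$ uniformly on the window works in place of the paper's comparison with a Riemann integral.
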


\begin{proof}
Let $l\in [m_n, m_n+c_2n^{3/4}]$.
We have to estimate $n^{-1/4} \sum_{k=l}^n a_n(k) e^{-nW(k/n)}$.
We split the interval $[l,n]$ into two subintervals as
$$[l,n] = [l, m_n+ (1/4)n] \cup  [m_n + (1/4)n, n]=:I_1+I_2.$$
The contribution coming from the interval $I_2$ is easily calculated.
Indeed, the sum over the interval $I_2$ is bounded as
\begin{equation*}
\frac{1}{n^{1/4}} \sum_{k=(3/4)n}^n a_n(k) e^{-n W(k/n)} \leq Cn^{3/4} e^{-nW(3/4)} \leq C,
\end{equation*}
if $n$ is large enough.

Now we focus on the sum over the interval $I_1$.
We use the bound $a_n(k)\le Cn^{-1/2}$ to obtain
\begin{equation*}
\frac{1}{n^{1/4}} \sum_{k=l}^{(3/4)n} a_n(k) e^{-n W(k/n)} \leq \frac{C}{n^{3/4}} \sum_{k=l}^{(3/4)n}e^{-n W(k/n)}.
\end{equation*}
Then it follows from the fourth-order Taylor expansion that
$$W(k/n) \geq \frac{2\theta}{\sqrt{n}} \big(\frac{k-m_n}{n}\big)^2 + \frac{c_W}{24} \big(\frac{k-m_n}{n}\big)^4,$$
that
\begin{align*}
\frac{C}{n^{3/4}} \sum_{k=l}^{(3/4)n}e^{-n W(k/n)}
&\leq \frac{C}{n^{3/4}} \sum_{k=l}^{(3/4)n} \exp \left\{-2\theta\sqrt{n}\big(\frac{k-m_n}{n}\big)^2 - \frac{c_Wn}{24} \big(\frac{k-m_n}{n}\big)^4 \right\} \\
&= \frac{C}{n^{3/4}} \sum_{k=l-m_n}^{(1/4)n} \exp \left\{-2\theta\big(\frac{k}{n^{{3/4}}}\big)^2 - \frac{c_W}{24} \big(\frac{k}{n^{3/4}}\big)^4 \right\}\\
&\le \frac{C}{n^{3/4}} \sum_{k=0}^{\infty} \exp \left\{-2\theta\big(\frac{k}{n^{{3/4}}}\big)^2 - \frac{c_W}{24} \big(\frac{k}{n^{3/4}}\big)^4 \right\}.
\end{align*}
The last expression is a Riemann sum, thus, we have
\begin{equation*}
\leq C \int_0^\infty e^{-2\theta x^2 - \frac{c_W}{24} x^4} dx = C.
\end{equation*}
This concludes the proof of Lemma \ref{lem15}.
\end{proof}

We now estimate the sum $\sum_{k=m_n}^l (\pi_n(k) d_n(k))^{-1}$.

\begin{lemma}\label{lem16}
There exists a constant $C>0$ such that
for any $n\in\N$ and $l\in [m_n, m_n+c_2n^{3/4}]$,
\begin{equation*}
\sum_{k=m_n}^l \frac{1}{\pi_n(k) d_n(k)} \leq C \sqrt{n}.
\end{equation*}
\end{lemma}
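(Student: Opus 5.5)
Following the scheme of Lemma \ref{lem13}, we use Stirling's approximation \eqref{Stirling} together with the facts that $2^nZ_U^n\le Cn^{1/4}$, $a_n(k)\ge Cn^{-1/2}$ and $d_n(k)\ge Cn$ for $k\in[m_n,(3/4)n]$. These reduce the left-hand side to a multiple of
\begin{equation*}
n^{-1/4}\sum_{k=m_n}^{l} e^{nW(k/n)} .
\end{equation*}
The task is therefore to show that $\sum_{k=m_n}^l e^{nW(k/n)}\le Cn^{3/4}$ uniformly for $l\in[m_n,m_n+c_2n^{3/4}]$.

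To bound the exponent, we Taylor expand $W$ at $1/2=m_n/n$ using \eqref{W_properties}. Since $W(1/2)=W'(1/2)=W'''(1/2)=0$ and $W''(1/2)=4\theta/\sqrt n$, and since $W^{(4)}$ is bounded above on $[0,1]$ (with the bound uniform in $n$, because $\gamma_n\to1/2$), we get, for $k\in[m_n,l]$,
\begin{equation*}
W(k/n)\le \frac{2\theta}{\sqrt n}\Big(\frac{k-m_n}{n}\Big)^2 + C\Big(\frac{k-m_n}{n}\Big)^4 .
\end{equation*}
Multiplying by $n$ and setting $x=(k-m_n)n^{-3/4}$, this gives $nW(k/n)\le 2\theta x^2+Cx^4$. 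In the present range, $x\in[0,c_2]$.

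Each exponential is then bounded by $\sup_{0\le x\le c_2} e^{2\theta x^2+Cx^4}=:K(c_2,\theta)$. This is a finite constant, because $c_2$ was fixed just before Lemma \ref{lem8}. The number of terms is at most $c_2n^{3/4}+1$, so
\begin{equation*}
\sum_{k=m_n}^l e^{nW(k/n)}\le K(c_2,\theta)\,(c_2n^{3/4}+1)\le Cn^{3/4}.
\end{equation*}
Combining this with the prefactor $n^{-1/4}$ yields the bound $C\sqrt n$. Equivalently, one can view $n^{-3/4}\sum_k e^{2\theta x_k^2+Cx_k^4}$ as a Riemann sum for $\int_0^{c_2}e^{2\theta x^2+Cx^4}\,dx$.

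No step is a serious obstacle. The only point needing care is that all the bounds used (the upper bound on $W^{(4)}$, the limit of $2^nZ_U^n n^{-1/4}$ from \cite[Lemma B.1]{DL}, and the lower bound on $d_n$) are uniform in $n$, since $W$ depends on $n$ through $\gamma_n$. This uniformity follows from the explicit formula for $U$ with $\gamma_n\to1/2$. For small $n$, the claim holds trivially by enlarging $C$.
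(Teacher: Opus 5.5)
Your proof is correct and follows the paper's argument essentially verbatim. It uses the same Stirling reduction to $n^{-1/4}\sum_{k=m_n}^{l}e^{nW(k/n)}$, the same fourth-order Taylor bound $nW(k/n)\le 2\theta x^2+Cx^4$ with $x=(k-m_n)n^{-3/4}\in[0,c_2]$, and the same Riemann-sum estimate (equivalently, sup times number of terms) giving $Cn^{3/4}$. One small correction: $W^{(4)}$ is not bounded above on all of $[0,1]$, since $E^{(4)}(\rho)=2\rho^{-3}+2(1-\rho)^{-3}$ blows up at the endpoints; you only need the bound on $[1/2,3/4]$, which contains $[1/2,\,1/2+c_2n^{-1/4}]$ for large $n$ and on which the bound is uniform in $n$.
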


\begin{proof}
Let $l\in [m_n, m_n+c_2n^{3/4}]$.
Similar to the estimates as we did before, we have to estimate
$n^{-1/4} \sum_{k=m_n}^l e^{nW(k/n)}$.
We use the fourth-order Taylor expansion to obtain
\begin{equation*}
W(k/n) \leq \frac{2\theta}{\sqrt{n}} \big(\frac{k-m_n}{n}\big)^2 + C \big(\frac{k-m_n}{n}\big)^4.
\end{equation*}
Thus, we obtain
\begin{align*}
\sum_{k=m_n}^l e^{nW(k/n)}
&\leq n^{3/4} \left[\frac{1}{n^{3/4}} \sum_{k=0}^{l-m_n} \exp \left\{ 2\theta \big(\frac{k}{n^{3/4}}\big)^2 + C \big(\frac{k}{n^{3/4}}\big)^4 \right\}\right] \\
&\le n^{3/4} \left[\frac{1}{n^{3/4}} \sum_{k=0}^{c_2n^{3/4}} \exp \left\{ 2\theta \big(\frac{k}{n^{3/4}}\big)^2 +C \big(\frac{k}{n^{3/4}}\big)^4 \right\}\right],
\end{align*}
since $l-m_n \le n^{3/4}$. Since the term between the brackets is a Riemann sum,
the above expression is bounded by
\begin{align*}
C n^{3/4} \int_0^{c_2} e^{2\theta x^2+ C x^4} dx = Cn^{3/4}.
\end{align*}
Therefore, collecting the previous bounds gives us
\begin{equation*}
\sum_{k=m_n}^l \frac{1}{\pi_n(k) d_n(k)} \leq C n^{-1/4}\sum_{k=m_n}^l e^{nW(k/n)} \leq C n^{1/2},
\end{equation*}
which is the desired bound.
\end{proof}

The following bound immediately follows from Lemmata \ref{lem15} and \ref{lem16}.

\begin{lemma}\label{lem9}
There exists a constant $C>0$ such that
for any $n\in\N$ and $l\in [m_n, m_n+c_2n^{3/4}]$,
\begin{equation*}
\sum_{k=m_n}^l \frac{1}{\pi_n(k)d_n(k)} \Psi(\pi_n([l,n])) \leq C \sqrt{n}.
\end{equation*}
\end{lemma}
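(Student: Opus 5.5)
Lemma~\ref{lem9} is just the product of the two preceding bounds, once we control $\Psi$ rather than $\pi_n([l,n])$ itself. Fix $n\in\N$ and $l\in[m_n,m_n+c_2n^{3/4}]$. Lemma~\ref{lem16} gives
\[
\sum_{k=m_n}^l \frac{1}{\pi_n(k)d_n(k)}\le C\sqrt n .
\]
It therefore suffices to show that $\Psi(\pi_n([l,n]))$ is bounded by a constant independent of $n$ and $l$.

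For this we use only that $\pi_n([l,n])$ is a probability, so it lies in $[0,1]$. The function $\Psi(x)=-x\log x$ is continuous on $[0,1]$ and attains its maximum $e^{-1}$ at $x=e^{-1}$. Hence $\Psi(\pi_n([l,n]))\le e^{-1}$ for every $n$ and $l$. (Lemma~\ref{lem15} gives $\pi_n([l,n])\le C$, which is consistent with this but not needed; unlike in Lemmata~\ref{lem7} and~\ref{lem8}, we do not need smallness of $\pi_n([l,n])$ or monotonicity of $\Psi$.)

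Multiplying the two bounds gives
\[
\sum_{k=m_n}^l \frac{1}{\pi_n(k)d_n(k)}\,\Psi(\pi_n([l,n]))\le e^{-1}C\sqrt n ,
\]
which is the claim.

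There is no real obstacle here. The substantive work, the Riemann-sum control of $n^{-1/4}\sum_{k=m_n}^{l} e^{nW(k/n)}$ on the window of width $c_2n^{3/4}$, was already done in Lemma~\ref{lem16}. That is where the $\sqrt n$ scale comes from: the window has $n^{3/4}$ sites, the Stirling prefactor contributes $n^{-1/4}$, and $nW(k/n)$ stays bounded there because $W\approx 2\theta n^{-1/2}(\cdot)^2+b(\cdot)^4$ on the $n^{-1/4}$ scale.
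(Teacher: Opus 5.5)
Your proposal is correct and follows the paper's argument: both combine the bound $\sum_{k=m_n}^l (\pi_n(k)d_n(k))^{-1}\le C\sqrt n$ from Lemma~\ref{lem16} with a uniform bound on $\Psi(\pi_n([l,n]))$. The only difference is that the paper obtains the $\Psi$ bound through Lemma~\ref{lem15}, whereas you use $\pi_n([l,n])\in[0,1]$ and $0\le\Psi\le e^{-1}$ there; this is valid, and it shows Lemma~\ref{lem15} is not needed for this step.
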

\begin{proof}
Let $l\in [m_n, m_n+c_2n^{3/4}]$.
Since the function $\Psi$ is bounded on bounded intervals,
by Lemma \ref{lem15} we obtain
\begin{equation*}
\sum_{k=m_n}^l \frac{1}{\pi_n(k)d_n(k)} \Psi(\pi_n([l,n])) \leq C\sum_{k=m_n}^l \frac{1}{\pi_n(k)d_n(k)}.
\end{equation*}
Then the conclusion immediately follows from Lemma \ref{lem16}.
\end{proof}


In Lemma \ref{lem2}, we proved that the logarithmic Sobolev constant
$\lambda_n$ defined in \eqref{ls1} is at least of order $n^{-1/2}$.
We also claim that this order is optimal in $n$, although we do not
use this fact in the rest of this paper.

\begin{proposition}\label{prop1}
There exists a constant $C>0$ such that for any $n\in\N$,
$$\lambda_n\le Cn^{-1/2}.$$
\end{proposition}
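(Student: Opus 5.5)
We prove the upper bound on $\lambda_n$ with a single test function in the variational formula \eqref{ls1}. One could instead use the upper half of Miclo's two-sided bound, $\lambda_n\le C/C_n$, and show $C_n\ge c\sqrt n$. That route requires lower bounds on $C_n^\pm(m)$ for \emph{every} $m$, which is more tedious, so we prefer the direct route. Any test function concentrated at the critical scale $n^{3/4}$ around $m_n$ should do. Concretely, take $\mf g(m)=\phi\big((m-m_n)/n^{3/4}\big)$ with $\phi$ smooth, bounded and nonconstant, for instance $\phi(x)=\tanh x$. Set $\mf f=\mf g^2/E_{\pi_n}[\mf g^2]$.

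\textbf{Step 1: entropy of order one.} The entropy $\mf H_n(\mf f|\pi_n)$ is bounded below by a positive constant uniformly in $n$. The estimates of Lemmata \ref{lem15}, \ref{lem16} and \ref{lem12}, together with Lemma B.1 of \cite{DL}, show the following. Under $\pi_n$, the variable $(m-m_n)/n^{3/4}$ converges in law to the density proportional to $e^{-2\theta x^2-bx^4}$ with $b=W^{(4)}(1/2)/24$. The convergence is of Riemann sums: $\pi_n(k)\approx Cn^{-3/4}e^{-nW(k/n)}$ near $m_n$, and $nW(k/n)\approx 2\theta x^2+bx^4$ with $x=(k-m_n)/n^{3/4}$. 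The tails are controlled exactly as in the bounds on $S_2$, $S_3$ in Lemma \ref{lem12}. Hence $\mf H_n(\mf f|\pi_n)$ converges to the entropy of $\phi^2/\int\phi^2$ with respect to this limit law. That limit is strictly positive because $\phi^2$ is nonconstant.

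\textbf{Step 2: Dirichlet form of order $n^{-1/2}$.} Since $\phi$ is bounded, $\sqrt{\mf f}=|\mf g|/\|\mf g\|$. Therefore
\[
\{\sqrt{\mf f(m+1)}-\sqrt{\mf f(m)}\}^2\le \|\phi'\|_\infty^2\, n^{-3/2}/E_{\pi_n}[\mf g^2].
\]
Moreover $b_n(m)\le Cn$, because $U_0'$ is bounded. Summing against $\pi_n$ gives
\[
\mf D_n^\bd(\mf f;\pi_n)\le C n\cdot n^{-3/2}=Cn^{-1/2},
\]
provided $E_{\pi_n}[\mf g^2]$ stays bounded away from zero. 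This holds by the weak convergence of Step 1.

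Dividing the bound of Step 2 by that of Step 1 gives $\lambda_n\le Cn^{-1/2}$ for $n$ large. Small $n$ are absorbed into the constant, since $\lambda_n<\infty$ for each $n$.

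The main obstacle is Step 1: making the convergence of the rescaled law of $\pi_n$ rigorous, in particular uniform control of the tails at the scale $n^{3/4}$. The upper and lower Taylor bounds on $W$ used in Lemmata \ref{lem12} and \ref{lem16} should suffice. The case $\theta<0$ needs the same care as in those lemmata.
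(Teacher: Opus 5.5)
Your proof is correct, but it takes a different route from the paper's.

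The paper never tests the entropy functional directly. It invokes the general comparison $2\lambda_n\le\gamma_n$ between the logarithmic Sobolev constant and the spectral gap \cite[Lemma 3.1]{DSC}. It then bounds $\gamma_n$ with the linear test function $\mf f_n(m)=n^{-3/4}m$. The Dirichlet form of $\mf f_n$ is $O(n^{-1/2})$ by the same bound $b_n(m)\le Cn$ that you use, and its variance under $\pi_n$ converges to $\int y^2\,\alpha^*(dy)>0$.

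You instead plug $\mf f=\mf g^2/E_{\pi_n}[\mf g^2]$, a bounded nonconstant profile at scale $n^{3/4}$, straight into \eqref{ls1}. Because the entropy is then of order one, you need neither a linearization nor the comparison lemma.

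What each route buys:
\begin{itemize}
\item The paper's argument is shorter once the standard inequality is granted. It also yields the stronger by-product $\gamma_n\le Cn^{-1/2}$.
\item However, its variance limit implicitly requires convergence of second moments of $(m-m_n)/n^{3/4}$ under $\pi_n$, i.e.\ uniform integrability coming from the tail estimates. The paper dismisses this as ``not difficult to see''.
\item Your bounded test function needs only weak convergence of the rescaled law to a limit with positive density on $\mathbb R$. Even less suffices: tightness plus two-sided comparability of $n^{3/4}\pi_n(m_n+xn^{3/4})$ with $e^{-2\theta x^2-x^4}$ on compacts, after passing to subsequences. That matters because the two-sided bound \eqref{Stirling} alone gives comparability up to constants rather than an identified limit.
\end{itemize}

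Both routes rest on the same scaling input: the Riemann-sum description of $\pi_n$ at scale $n^{3/4}$, obtained from Stirling's formula and Lemma B.1 of \cite{DL}. So the obstacle you flag is exactly the step the paper also leaves to the reader.
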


\begin{proof}
The upper bound on $\lambda_n$ follows from the comparison between $\lambda_n$ and
the the spectral gap $\gamma_n$ defined by
\begin{align}\label{ineq8}
\gamma_n:=\inf\left\{\dfrac{\mf D_n(\mf f;\pi_n)}{\mf V_n(\mf f|\pi_n)} \,\middle|\, \mf f:I_n\to\R, \mf V(\mf f|\pi_n)\neq0\right\},
\end{align}
where
\begin{align*}
\mf D_n(\mf f;\pi_n)&:=\dfrac12\sum_{m=0}^{n-1}b_n(m)\left\{ \mf f(m+1) - \mf f(m) \right\}^2 \pi_n(m). \\
\mf V_n(\mf f|\pi_n)&:=E_{\pi_n}\left[ (\mf f - E_{\pi_n}[\mf f])^2 \right].
\end{align*}
It is well-known that $2\lambda_n\le \gamma_n$, cf. \cite[Lemma 3.1]{DSC}.
Therefore, to obtain the conclusion it is enough to show that $\gamma_n\le Cm^{-1/2}$
for some $C>0$.

To obtain the bound $\gamma_n\le Cn^{-1/2}$, we choose a test function in \eqref{ineq8}
as $\mf f_n (m):= n^{-3/4}m, m\in I_n$. Since for any $m\in I_n, b_n(m)\le Cn$ for some
$C>0$, which is independent of $n$, $\mf D_n(\mf f_n;\pi_n)\le Cn^{-1/2}$. On the other hand,
it is not difficult to see that
\begin{align*}
\lim_{n\to\infty} \mf V_n(\mf f_n|\pi_n) = \int_\R y^2 \alpha^*(dy).
\end{align*}
Thus, the proof is complete.
\end{proof}

\smallskip\noindent{\bf Acknowledgments.}  The authors would like to
thank Benoit Dagallier for providing useful comments at an earlier
stage of this work, especially about the proof of Proposition
\ref{prop1} Part of this work was done during K. Tsunoda's visit to
IMPA in 2025.  He would like to thank IMPA for the numerous supports
and warm hospitality during his visit.  C. Landim has been partially
supported by FAPERJ CNE E-26/201.117/2021, and by CNPq Bolsa de
Produtividade em Pesquisa PQ 305779/2022-2.  K. Tsunoda has been
partially supported by JSPS KAKENHI, Grant-in-Aid for Early-Career
Scientists 22K13929.

\end{document}